\newif \ifCMAM \CMAMfalse
\pgfplotsset{compat=newest}
\newcommand{\email}[1]{\href{mailto:#1}{#1}}
\newcommand{\define}{\mathrel{\mathop:}=}
\newcommand{\enifed}{=\mathrel{\mathop:}}
\newcommand{\eps}{\varepsilon}
\newcommand{\R}{\mathbb{R}}
\newcommand{\Z}{\mathbb{Z}}
\newcommand{\N}{\mathbb{N}}
\renewcommand{\vec}[1]{\boldsymbol{#1}}
\newcommand{\grad}{\vec{\nabla}}
\renewcommand{\div}{{\rm div}}
\renewcommand{\L}[1][\Omega]{L^2(#1)}
\newcommand{\Hoz}{H^1_0(\Omega)}
\newcommand{\HoPer}{H^1_{\rm per}(Q)}
\newcommand{\Leps}{{\cal L}^1_\eps}
\newcommand{\Reps}{{\cal R}_\eps}
\renewcommand{\TH}{{\cal T}_H}
\newcommand{\FH}{{\cal F}_H}
\newcommand{\FHi}{{\cal F}^{\rm i}_H}
\newcommand{\FHb}{{\cal F}^{\rm b}_H}
\newcommand{\FT}{{\cal F}_T}
\newcommand{\PikT}{\Pi^k_T}
\newcommand{\PikF}{\Pi^k_F}
\newcommand{\VkpoT}{V^{k{+}1}_{\eps,T}}
\newcommand{\UkT}{\underline{{\rm U}}^k_T}
\newcommand{\UkH}[1][H]{\underline{{\rm U}}^k_{#1}}
\newcommand{\Pkd}[1]{\mathbb{P}^k_d(#1)}
\newcommand{\Pkmd}[1]{\mathbb{P}^{k-1}_d(#1)}
\newcommand{\Pkpd}[1]{\mathbb{P}^{k+1}_d(#1)}
\newcommand{\Pkdmo}[1]{\mathbb{P}^k_{d{-}1}(#1)}
\newcommand{\uvT}{\underline{{\rm v}}_T}
\newcommand{\vT}{{\rm v}_T}
\newcommand{\vpT}{{\rm v}_{\FT}}
\newcommand{\vF}{{\rm v}_F}
\newcommand{\uvH}{\underline{{\rm v}}_H}
\newcommand{\vH}{{\rm v}_{\TH}}
\newcommand{\vpH}{{\rm v}_{\FH}}
\newcommand{\uuT}{\underline{{\rm u}}_T}
\newcommand{\uueT}{\underline{{\rm u}}_{\eps,T}}
\newcommand{\ueT}{\underline{{\rm e}}_{\eps,T}}
\newcommand{\ueH}{\underline{{\rm e}}_{\eps,H}}
\newcommand{\uT}{{\rm u}_T}
\newcommand{\uuH}{\underline{{\rm u}}_H}
\newcommand{\IkT}{\underline{{\rm I}}^k_T}
\newcommand{\senorm}[2]{{\left|#2\right|}_{#1}}
\newcommand{\norm}[2]{{\left\|#2\right\|}_{#1}}
\newcommand{\term}{\mathfrak{T}}
\newtheorem{theorem}{Theorem}[section]
\newtheorem{lemma}[theorem]{Lemma}
\newtheorem{proposition}[theorem]{Proposition}
\newtheorem{remark}[theorem]{Remark}
\newtheorem{definition}[theorem]{Definition}
\title{A Hybrid High-Order method for highly oscillatory elliptic problems}
\author[1]{Matteo Cicuttin\footnote{\email{matteo.cicuttin@enpc.fr}}}
\author[1]{Alexandre Ern\footnote{\email{alexandre.ern@enpc.fr}}}
\author[2]{Simon Lemaire\footnote{Corresponding author:~\email{simon.lemaire@inria.fr}}}
\affil[1]{Universit\'e Paris-Est, CERMICS (ENPC),
          6--8 avenue Blaise Pascal,
          77455 Marne-la-Vall\'ee Cedex 2, and Inria Paris, 2 rue Simone Iff, 75012 Paris (France)}
\affil[2]{\'Ecole Polytechnique F\'ed\'erale de Lausanne, FSB-MATH-ANMC,
          Station 8,
          1015 Lausanne (Switzerland), and Inria Lille - Nord Europe, 40 avenue Halley, 59650 Villeneuve d'Ascq (France)}
\begin{document}

\ifCMAM
\author[1]{Matteo Cicuttin}
\author[1]{Alexandre Ern}
\author*[2]{Simon Lemaire} 
\affil[1]{Universit\'e Paris-Est, CERMICS (ENPC),
          6--8 avenue Blaise Pascal,
          77455 Marne-la-Vall\'ee Cedex 2, and Inria Paris, 2 rue Simone Iff, 75012 Paris (France); emails:~\email{matteo.cicuttin@enpc.fr},~\email{alexandre.ern@enpc.fr}.}
\affil[2]{\'Ecole Polytechnique F\'ed\'erale de Lausanne, FSB-MATH-ANMC,
          Station 8,
          1015 Lausanne (Switzerland), and Inria Lille - Nord Europe, 40 avenue Halley, 59650 Villeneuve d'Ascq (France); email:~\email{simon.lemaire@inria.fr}.}
\title{A Hybrid High-Order method for highly oscillatory elliptic problems}
\runningtitle{A HHO method for highly oscillatory elliptic problems}
\abstract{We devise a Hybrid High-Order (HHO) method for highly oscillatory elliptic problems that is capable of handling general meshes. The method hinges on discrete unknowns that are polynomials attached to the faces and cells of a coarse mesh; those attached to the cells can be eliminated locally using static condensation. The main building ingredient is a reconstruction operator, local to each coarse cell, that maps onto a fine-scale space spanned by oscillatory basis functions.
The present HHO method generalizes the ideas of some existing multiscale approaches, while providing the first complete analysis on general meshes. It also improves on those methods, taking advantage of the flexibility granted by the HHO framework. The method handles arbitrary orders of approximation $k\geq 0$. For face unknowns that are polynomials of degree $k$, we devise two versions of the method, depending on the polynomial degree $(k-1)$ or $k$ of the cell unknowns. We prove, in the case of periodic coefficients, an energy-error estimate of the form $\left(\eps^{\nicefrac12}+H^{k+1}+(\eps/H)^{\nicefrac12}\right)$, and we illustrate our theoretical findings on some test-cases.}
\keywords{General meshes, HHO methods, Multiscale methods, Highly oscillatory problems}
\classification[MSC]{65N30, 65N08, 76R50}
\communicated{...}
\received{...}
\accepted{...}
\journalname{Comput. Meth. Appl. Math.}
\journalyear{2018}
\journalvolume{..}
\journalissue{..}
\startpage{1}
\aop
\DOI{...}
\else
\fi

\maketitle

\ifCMAM
\else
\begin{abstract}
We devise a Hybrid High-Order (HHO) method for highly oscillatory elliptic problems that is capable of handling general meshes. The method hinges on discrete unknowns that are polynomials attached to the faces and cells of a coarse mesh; those attached to the cells can be eliminated locally using static condensation. The main building ingredient is a reconstruction operator, local to each coarse cell, that maps onto a fine-scale space spanned by oscillatory basis functions.
The present HHO method generalizes the ideas of some existing multiscale approaches, while providing the first complete analysis on general meshes. It also improves on those methods, taking advantage of the flexibility granted by the HHO framework. The method handles arbitrary orders of approximation $k\geq 0$. For face unknowns that are polynomials of degree $k$, we devise two versions of the method, depending on the polynomial degree $(k-1)$ or $k$ of the cell unknowns. We prove, in the case of periodic coefficients, an energy-error estimate of the form $\left(\eps^{\nicefrac12}+H^{k+1}+(\eps/H)^{\nicefrac12}\right)$, and we illustrate our theoretical findings on some test-cases.
\end{abstract}
\fi

\section{Introduction}

Over the last few years, many advances have been accomplished in the design of arbitrary-order polytopal discretization methods. Such methods are capable of handling meshes with polytopal cells, and possibly including hanging nodes. The use of polytopal meshes can be motivated by the increased flexibility, when meshing complex geometries, or when using agglomeration techniques for mesh coarsening (see, e.g.,~\cite{BBCDP:12}). 
Classical examples of polytopal methods are the (polytopal) Finite Element Method (FEM)~\cite{Wachs:75,SukTa:04}, which typically uses non-polynomial basis functions to enforce continuity, and non-conforming methods such as the Discontinuous Galerkin (DG)~\cite{ArBCM:02,DPErn:12,CanGH:14} and the Hybridizable Discontinuous Galerkin (HDG)~\cite{CocGL:09} methods.  
We also mention the Weak Galerkin (WG)~\cite{WanYe:13} method (see~\cite{Cockb:16} for its links to HDG).

More recently, new paradigms have emerged. One salient example is the Virtual Element Method (VEM)~\cite{BBCMM:13}, which is formulated in terms of virtual (i.e., non-computed) conforming functions. The key idea is that the virtual space contains those polynomial functions leading to optimal approximation properties, whereas the remaining functions need not be computed (only their degrees of freedom need to be) provided some suitable local stabilization is introduced. The degrees of freedom in the VEM are attached to the mesh vertices, and, as the order of the approximation is increased, also to the mesh edges, faces, and cells. Another recent polytopal method is the Hybrid High-Order (HHO) method, which has been introduced for locking-free linear elasticity in \cite{DPErn:15a}, and for diffusion in~\cite{DPELe:14}. The HHO method has been originally formulated as a non-conforming method, using polynomial unknowns attached to the mesh faces and cells. The HHO method has been bridged in~\cite{CoDPE:16} both to HDG (by identifying a suitable numerical flux trace), and to the non-conforming VEM considered in~\cite{AyuLM:16} (by identifying an isomorphism between the HHO degrees of freedom and a local virtual finite-dimensional space, which again contains those polynomial functions leading to optimal approximation properties).
The focus here is on HHO methods. HHO methods offer several assets, including a dimension-independent construction, local conservativity, and attractive computational costs, especially in 3D. Indeed, the HHO stencil is more compact than for methods involving degrees of freedom attached to the mesh vertices, and static condensation allows one to eliminate cell degrees of freedom, leading to a global problem expressed in terms of face degrees of freedom only, whose number grows quadratically with the polynomial order, whereas the growth of globally coupled degrees of freedom is typically cubic for DG methods.

In this work, we are interested in elliptic problems featuring heterogeneous/anisotropic coefficients that are highly oscillatory.
The case of slowly varying coefficients has already been treated in~\cite{DPErn:15b,DPELe:16}, where error estimates tracking the dependency of the approximation with respect to the local heterogeneity/anisotropy ratios have been derived.
Let $\Omega$ be an open, bounded, connected polytopal subset of $\R^d$, $d\in\{2,3\}$, and $\eps>0$, supposedly much smaller than the diameter of the domain $\Omega$, encode the highly oscillatory nature of the coefficients. We consider the model problem
\begin{equation} \label{eq:osc}
  \left\{
  \begin{alignedat}{3}
    -\div(\mathbb{A}_\eps\grad u_\eps)&=f&\quad&\text{in $\Omega$}, \\
    u_\eps&=0&\quad&\text{on $\partial\Omega$},
  \end{alignedat}
  \right.
\end{equation}
where $f\in\L$ is non-oscillatory, and $\mathbb{A}_\eps$ is an oscillatory, uniformly elliptic and bounded matrix-valued field on $\Omega$. It is well-known that the $H^{k+2}$-norm of the solution $u_\eps$ to Problem~\eqref{eq:osc} scales as $\eps^{-(k+1)}$, meaning that monoscale methods (including the monoscale HHO method of order $k\geq 0$ of~\cite{DPErn:15b,DPELe:16}) provide an energy-norm decay of the error of order ${(h/\eps)}^{k+1}$. To be accurate, such methods must hence rely on a mesh resolving the fine scale, i.e. with size $h\ll\eps$. Since $\eps$ is supposedly much smaller than the diameter of $\Omega$, an accurate approximation necessarily implies an overwhelming number of degrees of freedom. In a multi-query context, where the solution is needed for a large number of right-hand sides (e.g., an optimization loop, with $f$ as a control and~\eqref{eq:osc} as a distributed constraint), a monoscale solve is hence unaffordable. In that context, multiscale methods may be preferred. Multiscale methods aim at resolving the fine scale in an offline step, reducing the online step to the solution of a system of small size, based on oscillatory basis functions computed in the offline step, on a coarse mesh with size $H\gg\eps$. In a single-query context, multiscale methods are also interesting since they allow one to organize computations in a more efficient way.

Multiscale approximation methods on classical element shapes (such as simplices or quadrangles/hexahedra) have been extensively analyzed in the literature. Examples include, e.g., the multiscale Finite Element Method (msFEM)~\cite{HouWu:97,HouWC:99,EfHou:09} (with energy-error bound of the form $\left(\eps^{\nicefrac12}+H+(\eps/H)^{\nicefrac12}\right)$ in the periodic case), its variant using oversampling~\cite{HouWu:97,EfHoW:00} (with improved error bound of the form $\left(\eps^{\nicefrac12}+H+\eps/H\right)$ in the periodic case), or the Petrov--Galerkin variant of the msFEM using oversampling~\cite{HouWZ:04}. Let us also mention~\cite{AlBri:05} (see also~\cite{HesZZ:14}), which is an extension to arbitrary orders of approximation of the classical msFEM (with error bound of the form $\left(\eps^{\nicefrac12}+H^k+(\eps/H)^{\nicefrac12}\right)$ in the periodic case using $H^1$-conforming finite elements of degree $k\geq 1$).
These methods all rely on the assumption that a conforming finite element basis is available for the (coarse) mesh under consideration. 
Recent research directions essentially focus on the approximation of problems that do not assume scale separation, and on reducing and possibly eliminating the cell resonance error. One can cite, e.g., the Generalized msFEM (GmsFEM)~\cite{EfGaH:13}, or the Local Orthogonal Decomposition (LOD) approach~\cite{HenPe:13,MalPe:14}.
We also mention that other paradigms exist to approximate oscillatory problems, like the Heterogeneous Multiscale Method (HMM)~\cite{EEngq:03,AbEVE:12}. 

On general polytopal meshes, the literature on multiscale methods is more scarce. For constructions in the spirit of the msFEM, one can cite the msFEM \`a la Crouzeix--Raviart of~\cite{LBLLo:13,LBLLo:14}, the so-called Multiscale Hybrid-Mixed (MHM)~\cite{ArHPV:13,PaVaV:17} approach, and the (polynomial-based) method of~\cite{EfLaS:15} in the HDG context. Each one of these methods has its proper design, but they all share the same construction principles: they are based, more or less directly, on oscillatory basis functions that solve local Neumann problems with polynomial boundary data, and result in global systems (posed on the coarse mesh) that can be expressed in terms of face unknowns only. In the following, we will thus refer to those methods as skeletal-msFEM.
The MHM approach actually presents a small difference with respect to the two other approaches since it is based on a hybridized primal formulation, which leads to consider flux-type unknowns at interfaces instead of potential-type unknowns; as a consequence, and in order to impose the compatibility constraint, one needs to solve a saddle-point global problem, whereas for the two other approaches, one ends up with a coercive problem.
For the msFEM \`a la Crouzeix--Raviart, an error bound of the form $\left(\eps^{\nicefrac12}+H+(\eps/H)^{\nicefrac12}\right)$ is proved in~\cite{LBLLo:13} in the periodic case.
However, the analysis is led under the assumption that there exists a finite number of reference elements in the mesh sequence.
For the MHM approach, which is designed in the same spirit, the same type of upper bound for the error is expected. Yet, in~\cite{PaVaV:17}, the authors claim that their method is able to get rid of the resonance error (without oversampling); we clarify this issue in Remark~\ref{re:mhm} below.
For the HDG-like method, the analysis that is provided in~\cite{EfLaS:15} is sharp only in the regime $H\ll\eps$.
As a consequence, there is, to date, no complete polytopal analysis available in the literature for skeletal-msFEM.
Moreover, we observe that in the three methods, the discretization of the (non-oscillatory) right-hand side is realized in a somewhat suboptimal way, which can become a limiting issue in a multi-query context. In the msFEM \`a la Crouzeix--Raviart, the discretization is realized through a projection of the loading term onto the space spanned by the oscillatory basis functions. In the MHM and HDG-like approaches, the whole (local) space $H^1$ is considered. In all cases, the approximation of the right-hand side does not take advantage of the fact that the latter is non-oscillatory.
Let us mention, as another construction in the spirit of the msFEM, the work~\cite{Konat:17}, which exploits in the DG context the ideas introduced in~\cite{AlBri:05}. The drawback, which is inherent to DG methods, is the large size of the online systems.
For constructions in the spirit of the GmsFEM, let us mention in the HDG context the contributions~\cite{EfLMS:15,ChuFY:18} (that are based on~\cite{EfLaS:15}), and the work~\cite{MuWaY:16} in the WG context.

In this work, we devise a multiscale HHO (msHHO) method, which can be seen as a generalization (in particular to arbitrary orders of approximation) of the msFEM \`a la Crouzeix--Raviart of~\cite{LBLLo:13,LBLLo:14}.
Our contribution is twofold.
First, we provide an analysis (in the periodic setting) of the method that is valid on general polytopal mesh sequences (in particular, we do not postulate the existence of reference elements); in that respect, this work presents the first complete polytopal analysis of a skeletal-msFEM.
Note that considering general element shapes in the periodic setting is clearly not a good strategy (cf., e.g.,~\cite{Glori:12}); however, this setting is not our final target.
Second, taking advantage of the flexibility offered by the HHO framework, we improve on the existing methods.
We introduce (polynomial) cell unknowns, that we use for the integration of the right-hand side (cf.~Remarks~\ref{rem:disc_rhs} and~\ref{rem:disc_rhs_2} below). The non-oscillatory loading is hence discretized through a coarse-scale polynomial projection, while the size of the online system remains unchanged since the cell unknowns are locally eliminated in the offline step.
Two versions of the msHHO method are proposed herein, both employing polynomials of arbitrary order $k\ge0$ for the face unknowns. For the mixed-order msHHO method, the cell unknowns are polynomials of order $(k-1)$ (if $k\ge1$), whereas they are polynomials of order $k\ge0$ for the equal-order msHHO method. The mixed-order msHHO method does not require stabilization, whereas a simple stabilization (which avoids computing additional oscillatory basis functions) is introduced in the equal-order case. We prove for both methods an energy-error estimate of the form $\left(\eps^{\nicefrac12}+H^{k+1}+(\eps/H)^{\nicefrac12}\right)\enifed g_k(H)$ in the periodic case.
The analysis of the msHHO method differs from that of the monoscale HHO method since the local fine-scale space does not contain polynomial functions up to order $(k+1)$; in this respect, our key approximation result is Lemma~\ref{lem:app.VkpoT} below.
With respect to~\cite{LBLLo:13}, we also simplify the analysis and weaken the regularity assumptions (cf.~Remark~\ref{re:lbll} below).
Our analysis finally sheds new light on the relationship between the non-computed functions of the local virtual space and the associated local stabilization.
To motivate the design and use of a high-order method, we note, as it was already pointed out in~\cite{AlBri:05}, that the upper bound $g_k(H)$ is minimal for $H_k={\left(\eps^{\nicefrac12}/2(k+1)\right)}^{2/(2k+3)}$, hence as $k\geq 0$ increases, $H_k$ increases whereas $g_k(H_k)$ decreases.
The msHHO method we devise is meant to be a first step in the design of an accurate and computationally effective multiscale approach on general meshes. The next step will be to address the resonance phenomenon and the more realistic setting of no scale separation.

The article is organized as follows. In Sections~\ref{se:cont} and~\ref{se:dis} we introduce, respectively, the continuous and discrete settings.
In Section~\ref{se:app}, we introduce the fine-scale approximation space, exhibiting its (oscillatory) basis functions and studying, locally, its approximation properties. In Section~\ref{se:mshho}, we introduce the two versions of the msHHO method, analyze their stability, and derive energy-error estimates. In Section~\ref{se:num.val}, we present some numerical illustrations in the periodic and locally periodic settings.
Finally, in Appendix~\ref{ap:est} we collect some useful estimates on the first-order two-scale expansion.

\section{Continuous setting} \label{se:cont}

From now on, and in order to lead the analysis, we assume that the diffusion matrix $\mathbb{A}_\eps$ satisfies $\mathbb{A}_\eps({\cdot})=\mathbb{A}({\cdot}/\eps)$ in $\Omega$, where $\mathbb{A}$ is a symmetric and $\Z^d$-periodic matrix field on $\R^d$.
Letting $Q\define{(0,1)}^d$, we define, for $1\leq p\leq +\infty$ and $m\in\N^\star$, the following periodic spaces:
\begin{align*}
  L^p_{\rm per}(Q)&\define\left\{v\in L^p_{\rm loc}(\R^d)\mid v\text{ is }\Z^d\text{-periodic}\right\},\\\quad W^{m,p}_{\rm per}(Q)&\define\left\{v\in W^{m,p}_{\rm loc}(\R^d)\mid v\text{ is }\Z^d\text{-periodic}\right\},
\end{align*}
with the classical conventions that $W^{m,2}_{\rm per}(Q)$ is denoted $H^m_{\rm per}(Q)$ and that the subscript ``loc'' can be omitted for $p=+\infty$.
Letting ${\cal S}_d(\R)$ denote the set of real-valued $d\times d$ symmetric matrices, we also define, for real numbers $0<a\leq b$,
$$
{\cal S}_a^b\define\left\{\mathbb{M}\in{\cal S}_d(\R)\mid\forall\vec{\xi}\in\R^d,\,a|\vec{\xi}|^2\leq\mathbb{M}\vec{\xi}{\cdot}\vec{\xi}\leq b|\vec{\xi}|^2\right\}.
$$
We assume that there exist real numbers $0<\alpha\leq\beta$ such that
\begin{equation} \label{eq:ass}
\mathbb{A}({\cdot})\in{\cal S}_\alpha^\beta\text{ a.e.~in }\R^d.
\end{equation}
Assumption~\eqref{eq:ass} ensures that $\mathbb{A}_\eps\in L^\infty(\Omega;\R^{d\times d})$ is such that $\mathbb{A}_\eps({\cdot})\in{\cal S}_\alpha^\beta$ a.e.~in $\Omega$ for any $\eps>0$, and hence guarantees the existence and uniqueness of the solution to~\eqref{eq:osc} in $\Hoz$ for any $\eps>0$.
More importantly, the assumption~\eqref{eq:ass} ensures that the (whole) family ${(\mathbb{A}_\eps)}_{\eps>0}$ {\rm G}-converges~\cite[Section 1.3.2]{Allai:02} to some constant symmetric matrix $\mathbb{A}_0\in{\cal S}_\alpha^\beta$.
Henceforth, we denote $\rho\define\beta/\alpha\geq 1$ the (global) heterogeneity/anisotropy ratio of both ${(\mathbb{A}_\eps)}_{\eps>0}$ and $\mathbb{A}_0$.
Letting $(\vec{e}_1,\ldots,\vec{e}_d)$ denote the canonical basis of $\R^d$, the expression of $\mathbb{A}_0$ is known to read, for integers $1\leq i,j\leq d$,
\begin{equation} \label{eq:Azer}
  {[\mathbb{A}_0]}_{ij}=\int_Q\mathbb{A}\,(\vec{e}_j+\grad \mu_{j}){\cdot}(\vec{e}_i+\grad \mu_{i})=\int_Q\mathbb{A}\,(\vec{e}_j+\grad \mu_{j}){\cdot}\vec{e}_i,
\end{equation}
where, for any integer $1\leq l\leq d$, the so-called corrector $\mu_{l}\in\HoPer$ is the solution with zero mean-value on $Q$ to the problem
\begin{equation} \label{eq:corr}
  \left\{
  \begin{alignedat}{1}
    &-\div(\mathbb{A}(\grad \mu_{l}+\vec{e}_l))=0\quad\text{in $\R^d$}, \\
    &\;\,\mu_{l}\text{ is $\Z^d$-periodic}.
  \end{alignedat}
  \right.
\end{equation}
For further use, we also define the linear operator $\Reps:L^p_{\rm per}(Q)\to L^p(\Omega)$, $1\leq p\leq +\infty$, such that, for any function $\chi\in L^p_{\rm per}(Q)$, $\Reps(\chi)\in L^p(\Omega)$ satisfies $\Reps(\chi)({\cdot})=\chi({\cdot}/\eps)$ in $\Omega$.
In particular, for any integers $1\leq i,j\leq d$, we have ${\left[\mathbb{A}_\eps\right]}_{ij}=\Reps(\mathbb{A}_{ij})$.
A useful property of $\Reps$ is the relation $\partial_l(\Reps(\chi))=\frac{1}{\eps}\Reps(\partial_l\chi)$, valid for any function $\chi\in W^{1,p}_{\rm per}(Q)$ and any integer $1\leq l\leq d$.

The homogenized problem reads
\begin{equation} \label{eq:hom}
  \left\{
  \begin{alignedat}{3}
    -\div(\mathbb{A}_0\grad u_0)&=f&\quad&\text{in $\Omega$}, \\
    u_0&=0&\quad&\text{on $\partial\Omega$}.
  \end{alignedat}
  \right.
\end{equation}
We introduce the so-called first-order two-scale expansion 
\begin{equation} \label{eq:correcteur}
\Leps(u_0)\define u_0+\eps\sum_{l=1}^d\Reps(\mu_l)\partial_lu_0.
\end{equation}
Note that $(u_\eps-\Leps(u_0))$ does not a priori vanish on the boundary of $\Omega$.

\section{Discrete setting} \label{se:dis}

We denote by ${\cal H}\subset\mathbb{R}_+^\star$ a countable set of meshsizes having $0$ as its unique accumulation point, and we consider mesh sequences of the form ${(\TH)}_{H\in{\cal H}}$.
For any $H\in{\cal H}$, a {\em mesh} $\TH$ is a finite collection of nonempty disjoint open polytopes (polygons/polyhedra) $T$, called {\em elements} or {\em cells}, such that $\overline{\Omega}=\bigcup_{T\in\TH}\overline{T}$ and $H=\max_{T\in\TH}H_T$, $H_T$ standing for the diameter of the cell $T$. The mesh cells being polytopal, their boundary is composed of a finite union of portions of affine hyperplanes in $\R^d$ called facets (each facet has positive $(d{-}1)$-dimensional measure). A closed subset $F$ of $\overline{\Omega}$ is called a {\em face} if either
\begin{inparaenum}[(i)]
  \item there exist $T_1,T_2\in\TH$ such that $F=\partial T_1\cap\partial T_2\cap Z$ where $Z$ is an affine hyperplane supporting a facet of both $T_1$ and $T_2$ (and $F$ is termed {\em interface}), or 
  \item there exists $T\in\TH$ such that $F=\partial T\cap\partial\Omega\cap Z$ where $Z$ is an affine hyperplane supporting a facet of both $T$ and $\Omega$ (and $F$ is termed {\em boundary face}).
\end{inparaenum}
Interfaces are collected in the set $\FHi$, boundary faces in $\FHb$, and we let $\FH\define\FHi\cup\FHb$.
The diameter of a face $F\in\FH$ is denoted $H_F$.
For all $T\in\TH$, we define $\FT\define\{F\in\FH\mid F\subset\partial T\}$ the set of faces lying on the boundary of $T$; note that the faces in $\FT$ compose the boundary of $T$.
For any $T\in\TH$, we denote by $\vec{n}_{\partial T}$ the unit normal vector to $\partial T$ pointing outward $T$, and for any $F\in\FT$, we let $\vec{n}_{T,F}\define\vec{n}_{\partial T\mid F}$ (by definition, $\vec{n}_{T,F}$ is a constant vector on $F$).

We adopt the following notion of admissible mesh sequence; cf.~\cite[Section~1.4]{DPErn:12} and~\cite[Definition~2.1]{DPELe:16}.
\begin{definition}[Admissible mesh sequence] \label{def:adm}
The mesh sequence ${(\TH)}_{H\in{\cal H}}$ is {\em admissible} if, for all $H\in{\cal H}$, $\TH$ admits a matching simplicial sub-mesh $\mathfrak{T}_H$ (meaning that the cells in $\mathfrak{T}_H$ are sub-cells of the cells in $\TH$ and that the faces of these sub-cells belonging to the skeleton of $\TH$ are sub-faces of the faces in $\FH$), and there exists a real number $\gamma>0$, called {\em mesh regularity parameter}, such that, for all $H\in{\cal H}$, the following holds:
  \begin{enumerate}[(i)]
    \item For all simplex $S\in\mathfrak{T}_H$ of diameter $H_S$ and inradius $R_S$, $\gamma H_S\leq R_S$;
    \item For all $T\in\TH$, and all $S\in\mathfrak{T}_T\define\{S\in\mathfrak{T}_H\mid S\subseteq T\}$, $\gamma H_T\leq H_S$.
  \end{enumerate}
\end{definition}
\noindent
Two classical consequences of Definition~\ref{def:adm} are that, for any mesh $\TH$ belonging to an admissible mesh sequence, \begin{inparaenum}[(i)] \item the quantity ${\rm card}(\FT)$ is bounded independently of the diameter $H_T$ for all $T\in\TH$~\cite[Lemma 1.41]{DPErn:12}, and \item mesh faces have a comparable diameter to the diameter of the cells to which they belong~\cite[Lemma 1.42]{DPErn:12}\end{inparaenum}.

For any $q\in\N$, and any integer $1\leq l\leq d$, we denote by $\mathbb{P}^q_l$ the linear space spanned by $l$-variate polynomial functions of total degree less or equal to $q$.
We let
$${\rm N}^q_l\define{\rm dim}(\mathbb{P}^q_l)=\begin{pmatrix}q+l\\q\end{pmatrix}.$$
Let a mesh $\TH$ be given. For any $T\in\TH$, $\mathbb{P}^q_d(T)$ is composed of the restriction to $T$ of polynomials in $\mathbb{P}^q_d$, and for any $F\in\FH$, $\mathbb{P}^q_{d-1}(F)$ is composed of the restriction to $F$ of polynomials in $\mathbb{P}^q_d$ (this space can also be described as
the restriction to $F$ of polynomials in $\mathbb{P}^q_{d{-}1}\circ\Theta^{-1}$, where $\Theta$ is any affine bijective mapping from $\R^{d{-}1}$ to the affine hyperplane supporting $F$).
We also introduce, for any $T\in\TH$, the following broken polynomial space:
$$\mathbb{P}^q_{d-1}(\FT)\define\left\{v\in\L[\partial T]\mid v_{\mid F}\in\mathbb{P}^q_{d-1}(F)\;\forall F\in\FT\right\}.$$
The term `broken' refers to the fact that no continuity is required between adjacent faces for functions in $\mathbb{P}^q_{d-1}(\FT)$.
For any $T\in\TH$, we denote by ${(\Phi_T^{q,i})}_{1\leq i\leq {\rm N}^q_d}$ a set of basis functions of the space $\mathbb{P}^q_d(T)$, and for any $F\in\FH$, we denote by ${(\Phi_F^{q,j})}_{1\leq j\leq {\rm N}^q_{d{-}1}}$ a set of basis functions of the space $\mathbb{P}^q_{d-1}(F)$.
We define, for any $T\in\TH$ and $F\in\FH$, $\Pi^q_T$ and $\Pi^q_F$ as the $L^2$-orthogonal projectors onto $\mathbb{P}^q_d(T)$ and $\mathbb{P}^q_{d-1}(F)$, respectively. Whenever no confusion can arise, we write, for all $T\in\TH$, all $F\in\FT$, and all $v\in H^1(T)$, $\Pi^q_F(v)$ instead of $\Pi^q_F(v_{\mid F})$.

We conclude this section by recalling some classical results, that are valid for any mesh $\TH$ belonging to an admissible mesh sequence in the sense of Definition~\ref{def:adm}. For any $T\in\TH$ and $F\in\FT$, the trace inequalities
\begin{align}
  \norm{\L[F]}{v}&\leq c_{\rm tr,d}H_F^{-\nicefrac12}\norm{\L[T]}{v}&\forall&v\in\mathbb{P}^q_d(T), \label{eq:tr.d} \\
  \norm{\L[F]}{v}&\leq c_{\rm tr,c}{\left(H_T^{-1}\norm{\L[T]}{v}^2+H_T\norm{\L[T]^d}{\grad v}^2\right)}^{\nicefrac12}&\forall& v\in H^1(T), \label{eq:tr.c}
\end{align}
hold~\cite[Lemmas 1.46 and 1.49]{DPErn:12}, as well as the local Poincar\'e inequality
\begin{equation} \label{eq:poin}
  \norm{\L[T]}{v}\leq c_{\rm P}H_T\norm{\L[T]^d}{\grad v}\quad\forall v\in H^1(T)\text{ such that }\int_T v=0,
\end{equation}
where $c_{\rm P}=\pi^{-1}$ for convex elements~\cite{Beben:03}; estimates in the non-convex case can be found, e.g., in~\cite{VeeVe:12}. Finally, proceeding as in~\cite[Lemma~5.6]{ErnGu:17}, one can prove using the above trace and Poincar\'e inequalities that 
\begin{equation} \label{eq:app}
  \senorm{H^m(T)}{v-\Pi^q_T(v)}+H_T^{\nicefrac12}\senorm{H^m(F)}{v-\Pi^q_T(v)}\leq c_{\rm app}H_T^{s-m}\senorm{H^s(T)}{v}\quad\forall v\in H^s(T),
\end{equation}
for integers $1\leq s\leq q+1$ and $0\leq m\leq (s-1)$. All of the above constants are independent of the meshsize and can only depend on the underlying polynomial degree $q$, the space dimension $d$, and the mesh regularity parameter $\gamma$. 

Henceforth, we use the symbol $c$ to denote a generic positive constant, whose value can change at each occurrence, provided it is independent of the micro-scale $\eps$, any meshsize $H_T$ or $H$, and the homogenized solution $u_0$. We also track the direct dependency of the error bounds on the parameters $\alpha,\beta$ characterizing the spectrum of the diffusion matrix. The value of the generic constant $c$ can depend on the space dimension $d$, the underlying polynomial degree, the mesh regularity parameter $\gamma$, and on some higher-order norms of the rescaling $\mathbb{A}/\beta$ of the diffusion matrix or the correctors $\mu_{l}$ that will be made clear from the context. 

\section{Fine-scale approximation space} \label{se:app}

Let $k\in\N$ and let $\TH$ be a member of an admissible mesh sequence in the sense of Definition~\ref{def:adm}.
In this section, we introduce the fine-scale approximation space on which we will base our multiscale HHO method. We first construct in Section~\ref{sse:bas} a set of cell-based and face-based basis functions, then we provide in Section~\ref{sse:char} a local characterization of the underlying space, finally we study its approximation properties in Section~\ref{sse:app}.

\subsection{Oscillatory basis functions} \label{sse:bas}

The oscillatory basis functions consist of cell- and face-based basis functions.

\subsubsection{Cell-based basis functions} \label{ssse:bas}

Let $T\in\TH$.
If $k=0$, we do not define cell-based basis functions. Assume now that $k\geq 1$.
For all $1\leq i\leq {\rm N}^{k-1}_d$, we consider the problem
\begin{equation} \label{eq:bas.cell.op}
  \inf\left\{\int_T\left[\frac{1}{2}\mathbb{A}_\eps\grad\varphi{\cdot}\grad\varphi-\Phi_T^{k-1,i}\,\varphi\right],\,\varphi\in H^1(T),\,\PikF(\varphi)=0\;\forall F\in\FT\right\}.
\end{equation}
Problem~\eqref{eq:bas.cell.op} admits a unique minimizer.
This minimizer, that we will denote $\varphi_{\eps,T}^{k+1,i}\in H^1(T)$, can be proved to solve, for real numbers ${(\lambda_{F,j}^T)}_{F\in\FT,\,1\leq j\leq {\rm N}^k_{d{-}1}}$ satisfying the compatibility condition
\begin{equation*} \label{eq:comp.cell}
  \sum_{F\in\FT}\int_F\sum_{j=1}^{{\rm N}^k_{d{-}1}}\lambda_{F,j}^T\Phi_F^{k,j}=-\int_T\Phi_T^{k-1,i},
\end{equation*}
the constrained Neumann problem
\begin{equation} \label{eq:bas.cell.pr}
  \left\{
  \begin{alignedat}{3}
    -\div(\mathbb{A}_\eps\grad\varphi_{\eps,T}^{k+1,i})&=\Phi_T^{k-1,i}&\qquad&\text{in $T$}, \\
    \mathbb{A}_\eps\grad\varphi_{\eps,T}^{k+1,i}{\cdot}\vec{n}_{T,F}&=\sum_{j=1}^{{\rm N}^k_{d{-}1}}\lambda_{F,j}^T\Phi_F^{k,j}&\qquad&\text{on all $F\in\FT$}, \\
    \PikF(\varphi_{\eps,T}^{k+1,i})&=0&\qquad&\text{on all $F\in\FT$}.
  \end{alignedat}
  \right.
\end{equation}
The superscript $k+1$ is meant to remind us that the functions $\varphi_{\eps,T}^{k+1,i}$ are used to generate a linear space which has the same approximation capacity as the polynomial space of order at most $k+1$, as will be shown in Section~\ref{sse:app}. 

\begin{remark}[Practical computation] \label{re:prac}
To compute $\varphi_{\eps,T}^{k+1,i}$ for all $1\leq i\leq {\rm N}^{k-1}_d$, one considers in practice a (shape-regular) matching simplicial mesh ${\cal T}^T_h$ of the cell $T$, with size $h$ smaller than $\eps$. Then, one can solve Problem~\eqref{eq:bas.cell.pr} approximately by using a classical (equal-order) monoscale HHO method (or any other monoscale approximation method). For the implementation of the monoscale HHO method, we refer to \cite{CiDPE:17}.
One can either consider a weak formulation in $\left\{\varphi\in H^1(T),\,\PikF(\varphi)=0\,\forall F\in\FT\right\}$, which leads to a coercive problem, or a weak formulation in $H^1(T)$, which leads to a saddle-point system with Lagrange multipliers. Equivalent considerations apply below to the computation of the face-based basis functions.
Note that the error estimates we provide in this work for our approach do not take into account the local approximations of size $h$ and assume that~\eqref{eq:bas.cell.pr} and~\eqref{eq:bas.face.pr} below are solved exactly.
\end{remark}

\subsubsection{Face-based basis functions}

Let $T\in\TH$. For all $F\in\FT$ and all $1\leq j\leq {\rm N}^k_{d{-}1}$, we consider the problem
\begin{equation} \label{eq:bas.face.op}
  \inf\left\{\int_{T}\left[\frac{1}{2}\mathbb{A}_\eps\grad\varphi{\cdot}\grad\varphi\right],\,\varphi\in H^1(T),\,\PikF(\varphi)=\Phi_F^{k,j},\,\Pi^k_\sigma(\varphi)=0\;\forall\sigma\in{\cal F}_{T}\setminus\{F\}\right\}.
\end{equation}
Problem~\eqref{eq:bas.face.op} admits a unique minimizer.
This minimizer, that we will denote $\varphi_{\eps,T,F}^{k+1,j}\in H^1(T)$, can be proved to solve, for real numbers ${(\lambda_{\sigma,q}^{T,F})}_{\sigma\in{\cal F}_{T},\,1\leq q\leq {\rm N}^k_{d{-}1}}$ satisfying the compatibility condition
\begin{equation*} \label{eq:comp.face}
  \sum_{\sigma\in{\cal F}_{T}}\int_\sigma\sum_{q=1}^{{\rm N}^k_{d{-}1}}\lambda_{\sigma,q}^{T,F}\Phi_\sigma^{k,q}=0,
\end{equation*}
the constrained Neumann problem
\begin{equation} \label{eq:bas.face.pr}
  \left\{
  \begin{alignedat}{3}
    -\div(\mathbb{A}_\eps\grad\varphi_{\eps,T,F}^{k+1,j})&=0&\qquad&\text{in $T$}, \\
    \mathbb{A}_\eps\grad\varphi_{\eps,T,F}^{k+1,j}{\cdot}\vec{n}_{T,\sigma}&=\sum_{q=1}^{{\rm N}^k_{d{-}1}}\lambda_{\sigma,q}^{T,F}\Phi_\sigma^{k,q}&\qquad&\text{on all $\sigma\in{\cal F}_{T}$}, \\
    \PikF(\varphi_{\eps,T,F}^{k+1,j})&=\Phi_F^{k,j},&\qquad&\text{on $F$}, \\
    \Pi^k_\sigma(\varphi_{\eps,T,F}^{k+1,j})&=0&\qquad&\text{on all $\sigma\in{\cal F}_{T}\setminus\{F\}$}.
  \end{alignedat}
  \right.
\end{equation}

\subsection{Discrete space} \label{sse:char}

We introduce, for any $T\in\TH$, the space
\begin{equation} \label{eq:VkpoT}
  \VkpoT\define\left\{v_\eps\in H^1(T)\mid\div(\mathbb{A}_\eps\grad v_\eps)\in\mathbb{P}^{k-1}_d(T),\,\mathbb{A}_\eps\grad v_\eps{\cdot}\vec{n}_{\partial T}\in\Pkdmo{\FT}\right\},
\end{equation}
with the convention that $\mathbb{P}^{-1}_d(T)\define\{0\}$.
We recall that the condition $\mathbb{A}_\eps\grad v_\eps{\cdot}\vec{n}_{\partial T}\in\Pkdmo{\FT}$ is equivalent to $\mathbb{A}_\eps\grad v_\eps{\cdot}\vec{n}_{T,F}\in\Pkdmo{F}$ for all $F\in\FT$.
Proceeding as in~\cite[Section~2.4]{CoDPE:16}, it can be easily shown that the dimension of $\VkpoT$ is $\left({\rm N}^{k-1}_d+{\rm card}(\FT)\times{\rm N}^k_{d{-}1}\right)$ (or ${\rm card}(\FT)$ if $k=0$).
\begin{proposition}[Characterization of $\VkpoT$] \label{pr:span}
For any $T\in\TH$, the family
$$\left\{{(\varphi_{\eps,T}^{k+1,i})}_{1\leq i\leq{\rm N}^{k-1}_d},{(\varphi_{\eps,T,F}^{k+1,j})}_{F\in\FT,\,1\leq j\leq{\rm N}^k_{d{-}1}}\right\}$$
forms a basis for the space $\VkpoT$.
\end{proposition}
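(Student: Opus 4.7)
The plan is to combine a dimension count with a direct linear independence check. As recalled just before the statement, $\dim(\VkpoT) = {\rm N}^{k-1}_d + {\rm card}(\FT)\,{\rm N}^k_{d{-}1}$ (with the convention ${\rm N}^{-1}_d = 0$, so that for $k=0$ only the face term remains), which matches the cardinality of the proposed family. It is therefore enough to show that (i) each candidate basis function belongs to $\VkpoT$, and (ii) the family is linearly independent in $H^1(T)$.

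For (i), membership follows directly from the Neumann systems~\eqref{eq:bas.cell.pr} and~\eqref{eq:bas.face.pr}: the interior equation prescribes $\div(\mathbb{A}_\eps\grad\,\cdot\,)$ as either $-\Phi_T^{k-1,i}\in\Pkmd{T}$ (cell case) or $0$ (face case), while the natural boundary condition forces $\mathbb{A}_\eps\grad\,\cdot\,{\cdot}\,\vec{n}_{T,F}$ on each $F\in\FT$ to equal a linear combination of the $\Phi_F^{k,j}$, hence to lie in $\Pkdmo{F}$. Both structural conditions in~\eqref{eq:VkpoT} are thus satisfied.

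For (ii), I would exploit the constraints baked into the definitions of the basis functions. Assume a vanishing linear combination
$$
\sum_{i=1}^{{\rm N}^{k-1}_d}\alpha_i\,\varphi_{\eps,T}^{k+1,i} + \sum_{F\in\FT}\sum_{j=1}^{{\rm N}^k_{d{-}1}}\beta_{F,j}\,\varphi_{\eps,T,F}^{k+1,j} = 0\quad\text{in }H^1(T).
$$
For any fixed $F'\in\FT$, apply the face projector $\Pi^k_{F'}$: each $\varphi_{\eps,T}^{k+1,i}$ contributes $0$ by the constraint in~\eqref{eq:bas.cell.pr}, and each $\varphi_{\eps,T,F}^{k+1,j}$ with $F\neq F'$ contributes $0$ by the constraint in~\eqref{eq:bas.face.pr}, leaving $\sum_j \beta_{F',j}\Phi_{F'}^{k,j} = 0$ on $F'$; independence of $(\Phi_{F'}^{k,j})_j$ in $\Pkdmo{F'}$ forces $\beta_{F',j}=0$. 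Letting $F'$ range over $\FT$ kills all face coefficients, and applying $-\div(\mathbb{A}_\eps\grad\,\cdot\,)$ to what remains yields $\sum_i \alpha_i\,\Phi_T^{k-1,i} = 0$, whence $\alpha_i = 0$ by independence of the cell basis in $\Pkmd{T}$.

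I do not anticipate a real obstacle. The two constraints defining $\VkpoT$ --- the polynomial structure of $\div(\mathbb{A}_\eps\grad\,\cdot\,)$ and of the normal $\mathbb{A}_\eps$-flux --- are precisely the quantities that the projectors $\Pi^k_{F'}$ and $-\div(\mathbb{A}_\eps\grad\,\cdot\,)$ extract from the candidate basis functions, so the independence argument decouples cleanly. The case $k=0$ is covered uniformly by the conventions ${\rm N}^{-1}_d = 0$ and $\Pkmd{T} = \{0\}$, which make the cell step vacuous.
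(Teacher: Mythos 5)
Your proof is correct, but it takes the dual route to the one in the paper. Both arguments rest on the same two ingredients: the dimension count $\dim(\VkpoT)={\rm N}^{k-1}_d+{\rm card}(\FT)\,{\rm N}^k_{d-1}$ recalled before the statement, and the membership of the candidate functions in $\VkpoT$, which is immediate from the Neumann systems~\eqref{eq:bas.cell.pr} and~\eqref{eq:bas.face.pr}. Given these, one may complete the proof either by showing that the family spans $\VkpoT$ or by showing that it is linearly independent; the paper does the former, you do the latter. The paper's spanning argument is constructive: for an arbitrary $v_\eps\in\VkpoT$ it exhibits explicit coefficients (the expansion of $-\div(\mathbb{A}_\eps\grad v_\eps)$ in the cell basis and the face moments $x^{k,j}_\sigma(v_\eps)$), forms the difference $\zeta$, and checks that $\zeta$ solves a compatible homogeneous Neumann problem with vanishing face projections, hence $\zeta\equiv0$. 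Your independence argument is shorter and avoids the compatibility verification altogether: applying $\Pi^k_{F'}$ to a vanishing linear combination isolates the coefficients attached to $F'$ because the cell functions and the face functions attached to other faces have zero $\Pi^k_{F'}$-projection by construction, and applying $-\div(\mathbb{A}_\eps\grad\,\cdot\,)$ then isolates the cell coefficients. What the paper's version buys in exchange for the extra work is an explicit formula for the coordinates of any element of $\VkpoT$ in this basis, which is of independent practical interest; what yours buys is brevity and a cleaner decoupling of the two families of coefficients. Both are complete given the dimension formula, which neither proof re-derives.
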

\begin{proof}
To establish the result, we only need to prove that
$$
\VkpoT\subset{\rm Span}\left\{{(\varphi_{\eps,T}^{k+1,i})}_{1\leq i\leq{\rm N}^{k-1}_d},{(\varphi_{\eps,T,F}^{k+1,j})}_{F\in\FT,\,1\leq j\leq{\rm N}^k_{d{-}1}}\right\},
$$
since the converse inclusion follows from the definition of the oscillatory basis functions, and the cardinal of the family fits the dimension of $\VkpoT$.
Let $v_\eps\in\VkpoT$. Then, there exist real numbers ${(\theta_T^i)}_{1\leq i\leq{\rm N}^{k-1}_d}$ (only if $k\geq 1$) and ${(\theta_{T,F}^j)}_{F\in\FT,\,1\leq j\leq{\rm N}^k_{d{-}1}}$, satisfying the compatibility condition
  \begin{equation*} 
    \sum_{F\in\FT}\int_F\sum_{j=1}^{{\rm N}^k_{d{-}1}}\theta_{T,F}^j\Phi_F^{k,j}=-\int_T\sum_{i=1}^{{\rm N}^{k-1}_d}\theta_T^i\Phi_T^{k-1,i} (=0\text{ if }k=0),
  \end{equation*}
  such that
  \begin{equation*} 
    \left\{
    \begin{alignedat}{3}
      -\div(\mathbb{A}_\eps\grad v_\eps)&=\sum_{i=1}^{{\rm N}^{k-1}_d}\theta_T^i\Phi_T^{k-1,i} (=0\text{ if }k=0)&\qquad&\text{in $T$}, \\
      \mathbb{A}_\eps\grad v_\eps{\cdot}\vec{n}_{T,F}&=\sum_{j=1}^{{\rm N}^k_{d{-}1}}\theta_{T,F}^j\Phi_F^{k,j}&\qquad&\text{on all $F\in\FT$}.
    \end{alignedat}
    \right.
  \end{equation*}
  Let us now introduce
$$
\zeta\define v_\eps-\sum_{i=1}^{{\rm N}^{k-1}_d}\theta_T^i\varphi_{\eps,T}^{k+1,i}-\sum_{\sigma\in\FT}\sum_{j=1}^{{\rm N}^k_{d{-}1}}x^{k,j}_{\sigma}(v_\eps)\varphi_{\eps,T,\sigma}^{k+1,j},
$$
  where, for all $\sigma\in \FT$, the real numbers ${\left(x_{\sigma}^{k,j}(v_\eps)\right)}_{1\leq j\leq{\rm N}^k_{d{-}1}}$ solve the linear system
  $$\sum_{j=1}^{{\rm N}^k_{d{-}1}}\left(\int_{\sigma} \Phi_{\sigma}^{k,j}\,\Phi_{\sigma}^{k,q}\right)x_{\sigma}^{k,j}(v_\eps)=\int_{\sigma} v_\eps\,\Phi_{\sigma}^{k,q}\qquad\text{for all $1\leq q\leq {\rm N}^k_{d{-}1}$}.
$$
It can be easily checked that $-\div(\mathbb{A}_\eps\grad\zeta)=0$ in $T$ and that
$\mathbb{A}_\eps\grad\zeta{\cdot}\vec{n}_{T,F}\in\Pkdmo{F}$ and  $\PikF(\zeta)=0$
on all $F\in\FT$. Using the compatibility conditions, we also infer that 
$\int_{\partial T}\mathbb{A}_\eps\grad\zeta{\cdot}\vec{n}_{\partial T}=0$,
which means that the previous system for $\zeta$ is compatible.
Hence, $\zeta\equiv 0$, which concludes the proof.
\end{proof}
\begin{remark}[Space $\VkpoT$]
The definition of the space $\VkpoT$ is reminiscent of that considered in the 
non-conforming VEM in the case where $\mathbb{A}_\eps=\mathbb{I}_d$; see~\cite{AyuLM:16}
and also~\cite{CoDPE:16}.
\end{remark}
We define $H_{\partial T}\in\mathbb{P}^0_{d-1}(\FT)$ such that, for any $F\in\FT$, $H_{\partial T\mid F}\define H_F$.
We will need the following inverse inequality on the normal component of $\mathbb{A}_\eps\grad v_\eps$ for a function $v_\eps\in \VkpoT$; for completeness, we also establish a bound on the divergence.
\begin{lemma}[Inverse inequalities] \label{lem:inv.VkpoT}
The following holds for all $v_\eps\in\VkpoT$:
\begin{equation}
H_T\norm{L^2(T)}{\div(\mathbb{A}_\eps\grad v_\eps)}
+ \norm{L^2(\partial T)}{H_{\partial T}^{\nicefrac12}\mathbb{A}_\eps\grad v_\eps{\cdot}\vec{n}_{\partial T}}
\le c\, \beta^{\nicefrac12} \norm{L^2(T)^d}{\mathbb{A}_\eps^{\nicefrac12}\grad v_\eps},
\end{equation}
with $c$ independent of $\eps$, $H_T$, $\alpha$ and $\beta$.
\end{lemma}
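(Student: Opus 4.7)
The plan is to treat the two contributions on the left-hand side separately by bubble-function arguments run on the simplicial sub-mesh $\mathfrak{T}_T$ provided by Definition~\ref{def:adm}. Set $p\define-\div(\mathbb{A}_\eps\grad v_\eps)\in\Pkmd{T}$ (with the convention $p=0$ if $k=0$), and, for each $F\in\FT$, $\mu_F\define\mathbb{A}_\eps\grad v_\eps{\cdot}\vec{n}_{T,F}\in\Pkdmo{F}$. A key ingredient will be the pointwise inequality $|\mathbb{M}\vec{\xi}|^2\leq\beta\,\mathbb{M}\vec{\xi}{\cdot}\vec{\xi}$ (valid for any $\mathbb{M}\in{\cal S}_\alpha^\beta$ and any $\vec{\xi}\in\R^d$), which yields $\norm{\L[T]^d}{\mathbb{A}_\eps\grad v_\eps}\leq\beta^{\nicefrac12}\norm{\L[T]^d}{\mathbb{A}_\eps^{\nicefrac12}\grad v_\eps}$.

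For the volumetric term, I would define $w\in H^1_0(T)$ by setting, on each simplex $S\in\mathfrak{T}_T$, $w_{|S}\define b_S\,p_{|S}$, where $b_S$ is the classical volume bubble (product of the barycentric coordinates of $S$, hence vanishing on $\partial S$). Finite-dimensional norm equivalence on $\Pkmd{S}$ gives $\int_S b_S\,p^2\geq c\,\norm{\L[S]}{p}^2$, whereas the inverse inequality on $S$, combined with $H_S\sim H_T$ (admissibility), yields $\norm{\L[T]^d}{\grad w}\leq cH_T^{-1}\norm{\L[T]}{p}$. Since $w$ vanishes on $\partial T$, integration by parts and Cauchy--Schwarz give
$$
c\,\norm{\L[T]}{p}^2\leq\int_Tp\,w=\int_T\mathbb{A}_\eps\grad v_\eps{\cdot}\grad w\leq c\beta^{\nicefrac12}H_T^{-1}\norm{\L[T]^d}{\mathbb{A}_\eps^{\nicefrac12}\grad v_\eps}\norm{\L[T]}{p},
$$
which is the first half of the claim.

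For the face term, fix $F\in\FT$ and build a test function $w\in H^1(T)$ supported in the simplices of $\mathfrak{T}_T$ adjacent to $F$ and vanishing on $\partial T\setminus F$: for each simplicial sub-face $\sigma\subset F$ of adjacent simplex $S_\sigma\in\mathfrak{T}_T$, set $w_{|S_\sigma}\define b_\sigma\,E_{S_\sigma}(\mu_{F\mid\sigma})$, where $b_\sigma$ is the Verf\"urth face bubble on $S_\sigma$ (polynomial vanishing on $\partial S_\sigma\setminus\sigma$) and $E_{S_\sigma}$ is any polynomial extension to $S_\sigma$. Scaling to the reference simplex (using $H_\sigma\sim H_F\sim H_T$ from admissibility) delivers the classical estimates $\int_\sigma b_\sigma\mu_F^2\geq c\,\norm{\L[\sigma]}{\mu_F}^2$, $\norm{\L[S_\sigma]}{w}\leq cH_F^{\nicefrac12}\norm{\L[\sigma]}{\mu_F}$, and $\norm{\L[S_\sigma]^d}{\grad w}\leq cH_F^{-\nicefrac12}\norm{\L[\sigma]}{\mu_F}$. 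Integrating by parts on $T$,
$$
\int_F\mu_F\,w=\int_T\mathbb{A}_\eps\grad v_\eps{\cdot}\grad w-\int_Tp\,w,
$$
so Cauchy--Schwarz, summation over the sub-faces of $F$, and reusing the bound $H_T\norm{\L[T]}{p}\leq c\beta^{\nicefrac12}\norm{\L[T]^d}{\mathbb{A}_\eps^{\nicefrac12}\grad v_\eps}$ already proved, yield $H_F^{\nicefrac12}\norm{\L[F]}{\mu_F}\leq c\beta^{\nicefrac12}\norm{\L[T]^d}{\mathbb{A}_\eps^{\nicefrac12}\grad v_\eps}$. Squaring and summing over $F\in\FT$, whose cardinality is uniformly bounded by admissibility, finishes the proof.

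The main technical point is the polytopal nature of $T$: the bubble construction cannot be carried out on $T$ itself and must be localized on the simplices and sub-faces of $\mathfrak{T}_T$ before being patched together; global $H^1$-conformity of $w$ is automatic because the bubbles vanish on the sub-mesh skeleton. All scaling constants depend only on $\gamma$, $k$ and $d$, so the resulting constant is independent of $\eps$, $H_T$, $\alpha$ and $\beta$, as required.
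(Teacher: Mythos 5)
Your proposal is correct. For the divergence term you do essentially what the paper does: a volume-bubble argument on the simplicial sub-cells $S\in\mathfrak{T}_T$, integration by parts against $b_S\,p$, Cauchy--Schwarz with the weight $\mathbb{A}_\eps^{\nicefrac12}$, and the inverse inequality $\norm{\L[S]^d}{\grad(b_Sp)}\leq cH_S^{-1}\norm{\L[S]}{p}$; whether you assemble the local bubbles into one global $w\in H^1_0(T)$ or sum the local estimates is immaterial. For the normal trace, however, you take a genuinely different route. The paper treats $\mathbb{A}_\eps\grad v_\eps$ as an $\vec{H}(\mathrm{div})$ field and invokes a minimal-norm Raviart--Thomas--N\'ed\'elec lifting of the prescribed divergence and normal data on each sub-simplex, followed by a discrete trace inequality in the RTN space; this bounds $\norm{\L[\sigma]}{\mathbb{A}_\eps\grad v_\eps{\cdot}\vec{n}_{\partial T}}$ directly by $H_\sigma^{-\nicefrac12}\norm{\L[S]^d}{\mathbb{A}_\eps\grad v_\eps}$ without any reference to the divergence estimate. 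You instead use the classical Verf\"urth face-bubble test function $b_\sigma E_{S_\sigma}(\mu_{F\mid\sigma})$, which is legitimate here precisely because the definition of $\VkpoT$ makes $\mathbb{A}_\eps\grad v_\eps{\cdot}\vec{n}_{T,F}$ a genuine polynomial in $\Pkdmo{F}$; the price is that the integration by parts produces the volume residual $\int_T p\,w$, so your face estimate must be chained with the divergence estimate you proved first (which you correctly do), and you need an extension operator and the usual scaling estimates for $b_\sigma$. Your approach is more elementary and self-contained (no appeal to the RTN lifting lemma of Ern--Vohral\'{\i}k), while the paper's is slightly cleaner in that the two halves of the lemma are proved independently; both yield constants depending only on $d$, $k$ and $\gamma$, and the $\beta^{\nicefrac12}$ factor arises the same way in each.
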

\begin{proof}
Note that the functions on the left-hand side are (piecewise) polynomials, but the function on the right-hand side is not a polynomial in general. Let us first bound the divergence. Let $d_\eps\define \div(\mathbb{A}_\eps\grad v_\eps)\in \mathbb{P}^{k-1}_d(T)$. Let $S$ be a simplicial sub-cell of $T$. Considering the standard bubble function $b_S\in H^1_0(S)$ (equal to the scaled product of the barycentric coordinates in $S$ taking the value one at the barycenter of $S$), we infer using integration by parts that, for some $c>0$ depending on mesh regularity,
\begin{align*}
c\, \norm{L^2(S)}{d_\eps}^2 &\leq \int_S d_\eps b_S d_\eps = \int_S \div(\mathbb{A}_\eps\grad v_\eps)b_S d_\eps \\&= -\int_S \mathbb{A}_\eps\grad v_\eps{\cdot}\grad(b_S d_\eps)
\leq \beta^{\nicefrac12} \norm{L^2(S)^d}{\mathbb{A}_\eps^{\nicefrac12}\grad v_\eps}H_S^{-1}\norm{L^2(S)}{d_\eps},
\end{align*}
where the last bound follows by applying an inverse inequality to the polynomial function $b_S d_\eps$. Summing over all the simplicial sub-cells and invoking mesh regularity, we conclude that $\norm{L^2(T)}{\div(\mathbb{A}_\eps\grad v_\eps)} \le c\, \beta^{\nicefrac12}H_T^{-1} \norm{L^2(T)^d}{\mathbb{A}_\eps^{\nicefrac12}\grad v_\eps}$. 
Let us now bound the normal component at the boundary. Let $\sigma$ be a sub-face of a face $F\in\FT$, and let $S\subseteq T$ be the simplex of the sub-mesh such that $\sigma$ is a face of $S$. Then, $r_S\define [\div(\mathbb{A}_\eps\grad v_\eps)]_{\mid S} \in \mathbb{P}^{k-1}_d(S)\subset \mathbb{P}^{k}_d(S)$ and $r_\sigma\define [\mathbb{A}_\eps\grad v_\eps{\cdot}\vec{n}_{\partial T}]_{\mid\sigma} \in \mathbb{P}^{k}_{d-1}(\sigma)$. Note that $\vec{n}_{\partial T\mid\sigma}=\vec{n}_{\partial S\mid\sigma}$. Invoking~\cite[Lemma~A.3]{ErnVo:16}, we infer that there is a vector-valued polynomial function $\vec{q}$ in the Raviart--Thomas--N\'ed\'elec (RTN) finite element space of order $k$ in $S$ so that $\div(\vec{q})=r_S$ in $S$, $\vec{q}{\cdot}\vec{n}_{\partial T\mid\sigma}=r_\sigma$ on $\sigma$, and
\[
\|\vec{q}\|_{L^2(S)^d} \le c' \min_{\substack{\vec{z}\in \vec{H}(\mathrm{div};S)\\\div (\vec{z}) = r_S\text{ in $S$}\\\vec{z}{\cdot}\vec{n}_{\partial T\mid\sigma}=r_\sigma\text{ on $\sigma$}}} \|\vec{z}\|_{L^2(S)^d},
\]
with $c'$ depending on $\gamma$ (but not on $k$) and $\vec{H}(\mathrm{div};S)\define\{\vec{z}\in L^2(S)^d \mid \div(\vec{z})\in L^2(S)\}$. Since the function $[\mathbb{A}_\eps\grad v_\eps]_{\mid S}$ is in $\vec{H}(\mathrm{div};S)$ and satisfies the requested conditions on the divergence in $S$ and the normal component on $\sigma$, we conclude that $\|\vec{q}\|_{L^2(S)^d} \le c'\|\mathbb{A}_\eps\grad v_\eps\|_{L^2(S)^d}$. A discrete trace inequality in the RTN finite element space shows that 
\[
\norm{L^2(\sigma)}{\mathbb{A}_\eps\grad v_\eps{\cdot}\vec{n}_{\partial T}}
= \norm{L^2(\sigma)}{\vec{q}{\cdot}\vec{n}_{\partial T}}
\le c \,H_\sigma^{-\nicefrac12} \norm{L^2(S)^d}{\vec{q}} \le 
c \,H_\sigma^{-\nicefrac12} \norm{L^2(S)^d}{\mathbb{A}_\eps\grad v_\eps},
\]
where $c$ depends on $\gamma$ and $k$. We conclude by invoking mesh regularity.
\end{proof}

\subsection{Approximation properties} \label{sse:app}

We now investigate the approximation properties of the space $\VkpoT$, for all $T\in\TH$. Our aim is to study how well the first-order two-scale expansion $\Leps(u_0)$ can be approximated in the discrete space $\VkpoT$. 
Let us define $\pi_{\eps,T}^{k+1}(u_0)\in\VkpoT$ such that $\displaystyle\int_T\pi_{\eps,T}^{k+1}(u_0)=\int_T\Leps(u_0)$ and
\begin{equation} \label{eq:proj}
  \left\{
  \begin{alignedat}{3}
    -\div(\mathbb{A}_\eps\grad\pi_{\eps,T}^{k+1}(u_0))&=-\div(\mathbb{A}_0\grad\Pi^{k+1}_T(u_0))\in\mathbb{P}^{k-1}_d(T)&\qquad&\text{in $T$}, \\
    \mathbb{A}_\eps\grad\pi_{\eps,T}^{k+1}(u_0){\cdot}\vec{n}_{\partial T}&=\mathbb{A}_0\grad\Pi^{k+1}_T(u_0){\cdot}\vec{n}_{\partial T}\in\Pkdmo{\FT}&\qquad&\text{on $\partial T$}.
  \end{alignedat}
  \right.
\end{equation}
Note that the data in~\eqref{eq:proj} are compatible.
From~\eqref{eq:proj} we infer that, for any $w\in H^1(T)$,
\begin{equation} \label{eq:proj.wf}
  \int_T\mathbb{A}_\eps\grad\pi_{\eps,T}^{k+1}(u_0){\cdot}\grad w=\int_T\mathbb{A}_0\grad\Pi^{k+1}_T(u_0){\cdot}\grad w.
\end{equation}

\begin{lemma}[Approximation in $\VkpoT$] \label{lem:app.VkpoT}
  Assume that the correctors $\mu_{l}$ are in $W^{1,\infty}(\R^d)$ for any $1\leq l\leq d$, and that $u_0\in H^{k+2}(T)\cap W^{1,\infty}(T)$. Then, the following holds: 
\begin{multline} 
    \norm{\L[T]^d}{\mathbb{A}_\eps^{\nicefrac12}\grad(\Leps(u_0)-\pi_{\eps,T}^{k+1}(u_0))}\leq c\,\beta^{\nicefrac12}\rho^{\nicefrac12}\bigg(H_T^{k{+}1}\senorm{H^{k+2}(T)}{u_0} \\+ \eps \senorm{H^2(T)}{u_0} + \eps^{\nicefrac12} |\partial T|^{\nicefrac12}\senorm{W^{1,\infty}(T)}{u_0}\bigg),
    \label{eq:app.VkpoT}
\end{multline}
  with $c$ independent of $\eps$, $H_T$, $u_0$, $\alpha$, $\beta$, and possibly depending on $d$, $k$, $\gamma$, $\displaystyle\max_{1\leq l\leq d}\norm{W^{1,\infty}(\R^d)}{\mu_l}$.
\end{lemma}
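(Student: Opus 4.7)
The plan is to test~\eqref{eq:proj.wf} against $w\define\Leps(u_0)-\pi_{\eps,T}^{k+1}(u_0)\in H^1(T)$; this yields
\[
\norm{\L[T]^d}{\mathbb{A}_\eps^{\nicefrac12}\grad w}^2=\int_T\mathbb{A}_\eps\grad\Leps(u_0){\cdot}\grad w-\int_T\mathbb{A}_0\grad\Pi^{k+1}_T(u_0){\cdot}\grad w.
\]
Adding and subtracting $\mathbb{A}_0\grad u_0$ in the integrand splits the right-hand side into a polynomial-approximation residual $\term_1\define\int_T\mathbb{A}_0\grad(u_0-\Pi^{k+1}_T(u_0)){\cdot}\grad w$ and a two-scale residual $\term_2\define\int_T(\mathbb{A}_\eps\grad\Leps(u_0)-\mathbb{A}_0\grad u_0){\cdot}\grad w$. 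For $\term_1$, I would apply Cauchy--Schwarz together with the bound $\|\mathbb{A}_0\|\le\beta$ and the approximation estimate~\eqref{eq:app} with $s=k+2$, $m=1$; combining these with the coercivity $\alpha^{\nicefrac12}\senorm{H^1(T)}{w}\le\norm{\L[T]^d}{\mathbb{A}_\eps^{\nicefrac12}\grad w}$ yields the announced $\beta^{\nicefrac12}\rho^{\nicefrac12}H_T^{k+1}\senorm{H^{k+2}(T)}{u_0}$ contribution.

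For $\term_2$, the plan is to rely on the standard two-scale identity, obtained from the relation $\grad\Reps(\chi)=\eps^{-1}\Reps(\grad_y\chi)$ applied to $\Leps(u_0)$,
\[
\mathbb{A}_\eps\grad\Leps(u_0)-\mathbb{A}_0\grad u_0=\sum_{l=1}^d(\partial_lu_0)\,\Reps(\vec{M}_l)+\eps\sum_{l=1}^d\mathbb{A}_\eps\Reps(\mu_l)\grad(\partial_lu_0),
\]
where $\vec{M}_l(y)\define\mathbb{A}(y)(\vec{e}_l+\grad\mu_l(y))-\mathbb{A}_0\vec{e}_l$ is the flux corrector. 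The second sum is directly controlled by Cauchy--Schwarz using $\|\mathbb{A}_\eps\|\le\beta$ and $\|\Reps(\mu_l)\|_{L^\infty(\Omega)}\le\|\mu_l\|_{W^{1,\infty}(\R^d)}$; combined with coercivity, this produces the $\beta^{\nicefrac12}\rho^{\nicefrac12}\eps\,\senorm{H^2(T)}{u_0}$ contribution. For the first sum, the structural observation is that $\vec{M}_l$ is $\Z^d$-periodic, has zero mean on $Q$ (from~\eqref{eq:Azer}), and is divergence-free on $\R^d$ (from~\eqref{eq:corr}); this guarantees the existence of a skew-symmetric matrix-valued periodic potential $\mathbb{B}_l\in W^{1,\infty}_{\rm per}(Q)$ such that $\div_y\mathbb{B}_l=\vec{M}_l$, whence $\Reps(\vec{M}_l)=\eps\,\div\Reps(\mathbb{B}_l)$.

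The main obstacle is exploiting this potential representation without either requiring second derivatives of $w$ or a meaningful trace of $\grad w$ on $\partial T$ (no inverse estimate being available since $w$ is not a polynomial). To circumvent this, I would introduce a boundary-layer cutoff $\theta_\eps\in W^{1,\infty}(T)$ with $\theta_\eps=0$ on $\partial T$, $\theta_\eps=1$ outside an $\eps$-neighbourhood of $\partial T$, and $\|\grad\theta_\eps\|_{L^\infty(T)}\le c\eps^{-1}$, and split the first sum into its $\theta_\eps$- and $(1-\theta_\eps)$-contributions. On the $(1-\theta_\eps)$-part, Cauchy--Schwarz on the boundary strip (of measure $\lesssim\eps|\partial T|$), combined with the $L^\infty$-bounds on $\mathbb{A}$, $\grad\mu_l$, and $\grad u_0$, yields the $\eps^{\nicefrac12}|\partial T|^{\nicefrac12}\senorm{W^{1,\infty}(T)}{u_0}$ contribution. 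On the $\theta_\eps$-part, I would integrate by parts using $\Reps(\vec{M}_l)=\eps\,\div\Reps(\mathbb{B}_l)$: the boundary term vanishes (since $\theta_\eps=0$ on $\partial T$), the dangerous $\partial_i\partial_j w$ contribution cancels by skew-symmetry of $\mathbb{B}_l$, a benign bulk piece of size $\eps\,|u_0|_{H^2(T)}\,|w|_{H^1(T)}$ remains, and the $\grad\theta_\eps$-piece, again supported on the $\eps$-strip and multiplied by the $\eps$ prefactor, contributes one more term of the form $\eps^{\nicefrac12}|\partial T|^{\nicefrac12}\senorm{W^{1,\infty}(T)}{u_0}$. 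Collecting the three bounds and factoring out $\norm{\L[T]^d}{\mathbb{A}_\eps^{\nicefrac12}\grad w}$ then delivers~\eqref{eq:app.VkpoT}; the flux-corrector machinery (the two-scale identity, the construction and regularity of $\mathbb{B}_l$, and the boundary-layer argument) is precisely what is gathered in Appendix~\ref{ap:est}.
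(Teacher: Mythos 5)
Your proposal is correct and follows essentially the same route as the paper: test \eqref{eq:proj.wf} against $w=\Leps(u_0)-\pi_{\eps,T}^{k+1}(u_0)$, split off the polynomial-approximation residual via \eqref{eq:app}, and control the two-scale residual through the flux-corrector potential, the skew-symmetry cancellation, and an $\eps$-width boundary-layer cutoff. The only organizational difference is that the paper packages this last step as the standalone dual-norm estimate of Lemma~\ref{lem:dual_Neu1} (applied with $D=T$), which you instead re-derive inline.
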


\begin{proof}
Subtracting/adding $\mathbb{A}_0\grad u_0$ and using~\eqref{eq:proj.wf} with $w=\Leps(u_0)_{|T}-\pi_{\eps,T}^{k+1}(u_0)$ which is in $H^1(T)$, we infer that 
\begin{multline*}
    \norm{\L[T]^d}{\mathbb{A}_\eps^{\nicefrac12}\grad (\Leps(u_0)-\pi_{\eps,T}^{k+1}(u_0))}^2=\int_T(\mathbb{A}_\eps\grad \Leps(u_0)-\mathbb{A}_0\grad u_0){\cdot}\\\grad (\Leps(u_0)-\pi_{\eps,T}^{k+1}(u_0))+\int_T\mathbb{A}_0\grad(u_0-\Pi^{k+1}_T(u_0)){\cdot}\grad(\Leps(u_0)-\pi_{\eps,T}^{k+1}(u_0)).
\end{multline*}
Using the Cauchy--Schwarz inequality and the fact that $\Leps(u_0)_{\mid T}-\pi_{\eps,T}^{k+1}(u_0)$ has zero mean-value on $T$ by construction, we infer that
\begin{align*} 
    \norm{\L[T]^d}{\mathbb{A}_\eps^{\nicefrac12}\grad (\Leps(u_0)-\pi_{\eps,T}^{k+1}(u_0))}\leq {}&\beta^{\nicefrac12}\rho^{\nicefrac12}\norm{L^2(T)^d}{\grad(u_0-\Pi^{k+1}_T(u_0))}\\&+\alpha^{-\nicefrac12}\sup_{w\in H^1_\star(T)}\frac{\left|\mathcal{F}_\eps(w)\right|}{\norm{\L[T]^d}{\grad w}},
\end{align*}
with $\mathcal{F}_\eps(w) = \int_T(\mathbb{A}_\eps\grad\Leps(u_0)-\mathbb{A}_0\grad u_0){\cdot}\grad w$ and $H^1_\star(T)=\{w\in H^1(T) \mid \int_Tw=0\}$. The first term in the right-hand side is bounded using the approximation properties~\eqref{eq:app} of $\Pi^{k+1}_T$ with $m=1$ and $s=k+2$, and the second term is bounded in Lemma~\ref{lem:dual_Neu1} (take $D=T$).
\end{proof}

\begin{remark}[Alternative estimate] \label{re:lbll}
An alternative estimate to~\eqref{eq:app.VkpoT} can be derived under the slightly stronger regularity assumptions
that there is $\kappa>0$ so that $\mathbb{A}\in C^{0,\kappa}(\R^d;\R^{d\times d})$, and that $u_0\in H^{\max(k+2,3)}(T)$.
The proof of this estimate follows the strategy advocated 
in~\cite{LBLLo:13}, where 
one invokes Lemma~\ref{lem:dual_Neu2} instead of Lemma~\ref{lem:dual_Neu1} at the end of the proof of Lemma~\ref{lem:app.VkpoT} to infer that
  \begin{multline*} 
    \norm{\L[T]^d}{\mathbb{A}_\eps^{\nicefrac12}\grad(\Leps(u_0)-\pi_{\eps,T}^{k+1}(u_0))}\leq c\,\beta^{\nicefrac12}\rho^{\nicefrac12}\bigg(H_T^{k{+}1}\senorm{H^{k+2}(T)}{u_0} \\ + \left(\eps+(\eps H_T)^{\nicefrac12}\right) \senorm{H^2(T)}{u_0} + \eps H_T\senorm{H^3(T)}{u_0} + \eps^{\nicefrac12} H_T^{-\nicefrac12}\senorm{H^1(T)}{u_0}\bigg),
  \end{multline*}
with $c$ independent of $\eps$, $H_T$, $u_0$, $\alpha$, $\beta$, and possibly depending on $d$, $k$, $\gamma$, $\norm{C^{0,\kappa}(\R^d;\R^{d\times d})}{\mathbb{A}/\beta}$.
This local estimate leads to the same global error estimate for (both versions of) the msHHO method described hereafter than~\eqref{eq:app.VkpoT}; see in particular the end of the proof of Theorem~\ref{th:est.err}.
\end{remark}

\section{The msHHO method} \label{se:mshho}

In this section, we introduce and analyze the multiscale HHO (msHHO) method. We consider first in Section~\ref{sec:mixed_order} a mixed-order version and then in Section~\ref{sec:equal_order} an equal-order version concerning the polynomial degree used for the cell and face unknowns.
Let $\TH$ be a member of an admissible mesh sequence in the sense of Definition~\ref{def:adm}.

\subsection{The mixed-order case}
\label{sec:mixed_order}

Let $k\geq1$. For all $T\in\TH$, we consider the following local set of discrete unknowns:
\begin{equation} \label{eq:UkT}
  \UkT\define\Pkmd{T}\times\Pkdmo{\FT}.
\end{equation}
Any element $\uvT\in\UkT$ is decomposed as $\uvT\define(\vT,\vpT)$.
For any $F\in\FT$, we denote $\vF\define{\rm v}_{\FT\mid F}\in\Pkdmo{F}$.
We introduce the local reduction operator $\IkT:H^1(T)\to\UkT$ such that, for any $v\in H^1(T)$, $\IkT v\define(\Pi^{k-1}_T(v),\Pi^k_{\partial T}(v))$, where $\Pi^k_{\partial T}(v)\in\Pkdmo{\FT}$ is defined, for any $F\in\FT$, by ${\Pi^k_{\partial T}(v)}_{\mid F}\define\PikF(v)$.
Reasoning as in \cite[Section 2.4]{CoDPE:16}, 
it can be proved that, for all $T\in\TH$, the restriction of $\IkT$ to $\VkpoT$ is an isomorphism from $\VkpoT$ to $\UkT$. Thus,
the triple $(T,\VkpoT,\IkT)$ defines a finite element in the sense of Ciarlet.

We define the local multiscale reconstruction operator $p_{\eps,T}^{k+1}:\UkT\to\VkpoT$ such that, for any $\uvT=(\vT,\vpT)\in\UkT$, $p_{\eps,T}^{k+1}(\uvT)\in\VkpoT$ satisfies $\displaystyle\int_T p_{\eps,T}^{k+1}(\uvT)=\int_T\vT$ and solves, for all $w_\eps\in\VkpoT$, the well-posed local Neumann problem
\begin{equation} \label{eq:rec.op}
  \int_T\mathbb{A}_\eps\grad p_{\eps,T}^{k+1}(\uvT){\cdot}\grad w_\eps=-\int_T\vT\,\div(\mathbb{A}_\eps\grad w_\eps)+\int_{\partial T}\vpT\,\mathbb{A}_\eps\grad w_\eps{\cdot}\vec{n}_{\partial T}.
\end{equation}
Note that~\eqref{eq:rec.op} can be equivalently rewritten
\begin{equation} \label{eq:rec.op.int}
  \int_T\mathbb{A}_\eps\grad p_{\eps,T}^{k+1}(\uvT){\cdot}\grad w_\eps=\int_T\grad\vT{\cdot}\mathbb{A}_\eps\grad w_\eps-\int_{\partial T}(\vT-\vpT)\mathbb{A}_\eps\grad w_\eps{\cdot}\vec{n}_{\partial T}.
\end{equation}
Integrating by parts the left-hand side of~\eqref{eq:rec.op} and exploiting the definition~\eqref{eq:VkpoT} of the space $\VkpoT$, one can see that, for any $\uvT\in\UkT$,
\begin{equation} \label{eq:proj_reco}
\Pi_T^{k-1}\left(p_{\eps,T}^{k+1}(\uvT)\right)=\Pi_T^{k-1}(\vT)=\vT,\qquad\Pi_{\partial T}^k\left(p_{\eps,T}^{k+1}(\uvT)\right)=\Pi_{\partial T}^k(\vpT)=\vpT.
\end{equation}
Owing to~\eqref{eq:VkpoT} and~\eqref{eq:rec.op}, we infer that, for all $v\in H^1(T)$, 
  \begin{equation} \label{eq:ell.proj}
    \int_T\mathbb{A}_\eps\grad\left(v-p_{\eps,T}^{k+1}(\IkT v)\right){\cdot}\grad w_\eps=0\quad\forall w_\eps\in\VkpoT,
  \end{equation}
so that $p_{\eps,T}^{k+1}\circ \IkT:H^1(T) \to \VkpoT$ is the $\mathbb{A}_\eps$-weighted elliptic projection. As a consequence, we have, for all $v\in H^1(T)$, 
\begin{equation} \label{eq:inf}
    \norm{\L[T]^d}{\mathbb{A}_\eps^{\nicefrac{1}{2}}\grad\left(v-p_{\eps,T}^{k+1}(\IkT v)\right)}=\inf_{w_\eps\in\VkpoT}\norm{\L[T]^d}{\mathbb{A}_\eps^{\nicefrac{1}{2}}\grad\left(v-w_\eps\right)}.
\end{equation}
Since the operator $p_{\eps,T}^{k+1}\circ \IkT$ preserves the mean value, its restriction to $\VkpoT$ is the identity operator.

\begin{remark}[Comparison with the monoscale HHO method]
In the monoscale HHO method, the reconstruction operator is simpler to construct since it maps onto $\Pkpd{T}$ (which is a proper subspace of $\VkpoT$ whenever $\mathbb{A}_\eps$ is a constant matrix on $T$), whereas in the multiscale context, we explore the whole space $\VkpoT$ to build the reconstruction. One advantage of doing this is that we no longer need stabilization in the present case. Another advantage is that we recover the characterization of $p_{\eps,T}^{k+1}\circ \IkT$ as the $\mathbb{A}_\eps$-weighted elliptic projector onto $\VkpoT$, that is lost in the monoscale case as soon as $\mathbb{A}_\eps$ is not a constant matrix on $T$.
\end{remark}

The local bilinear form $a_{\eps,T}:\UkT\times\UkT\to\R$ is defined as
$$
a_{\eps,T}(\uuT,\uvT)\define\int_T\mathbb{A}_\eps\grad p_{\eps,T}^{k+1}(\uuT){\cdot}\grad p_{\eps,T}^{k+1}(\uvT).
$$
We introduce the following semi-norm on $\UkT$:
\begin{equation} \label{eq:normT}
  \norm{T}{\uvT}^2\define\norm{\L[T]^d}{\grad\vT}^2+\norm{\L[\partial T]}{H_{\partial T}^{-\nicefrac12}(\vT-\vpT)}^2.
\end{equation}

\begin{lemma}[Local stability] \label{le:stab}
The following holds:
\begin{equation}
\label{eq:stab_a}
a_{\eps,T}(\uvT,\uvT)\geq c\, \alpha\norm{T}{\uvT}^2\qquad \forall \uvT\in\UkT,
\end{equation}
with constant $c$ independent of $\eps$, $H_T$, $\alpha$ and $\beta$.
\end{lemma}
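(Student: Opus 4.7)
The aim is a lower bound $\alpha\|\uvT\|_T^2\lesssim a_{\eps,T}(\uvT,\uvT)$. Setting $p\define p_{\eps,T}^{k+1}(\uvT)\in\VkpoT$ and using pointwise $\mathbb{A}_\eps^{\nicefrac12}\grad p\geq\alpha^{\nicefrac12}|\grad p|$ a.e., it suffices to prove $\|\uvT\|_T^2\leq c\,\|\grad p\|_{\L[T]^d}^2$ with $c$ depending only on $d$, $k$, $\gamma$. The plan relies on the two projection identities~\eqref{eq:proj_reco}, $\Pi_T^{k-1}(p)=\vT$ and $\Pi_{\partial T}^k(p)=\vpT$, together with the matching of polynomial degrees characteristic of the mixed-order setting.

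\emph{Jump contribution.} On any $F\in\FT$, one has $(\vT-\vpT)_{|F}\in\Pkdmo{F}$ since $\vT_{|F}\in\mathbb{P}^{k-1}_{d-1}(F)\subset\Pkdmo{F}$, so the face projection yields $\int_F(p-\vpT)(\vT-\vpT)=0$. Expanding $\|\vT-\vpT\|_{\L[F]}^2$ through $\vT-\vpT=(\vT-p)+(p-\vpT)$ and applying Cauchy--Schwarz then gives $\|\vT-\vpT\|_{\L[F]}\leq\|\vT-p\|_{\L[F]}$. Since $\vT=\Pi_T^{k-1}(p)$, the approximation estimate~\eqref{eq:app} with $q=k{-}1$, $s=1$, $m=0$ produces $\|\vT-p\|_{\L[F]}\leq c\,H_T^{\nicefrac12}\|\grad p\|_{\L[T]^d}$. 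Summing over $F\in\FT$ and invoking $H_F\sim H_T$ and the boundedness of $\mathrm{card}(\FT)$ (Definition~\ref{def:adm}) then yields $\|H_{\partial T}^{-\nicefrac12}(\vT-\vpT)\|_{\L[\partial T]}\leq c\,\|\grad p\|_{\L[T]^d}$.

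\emph{Cell-gradient contribution.} I would write $\|\grad\vT\|_{\L[T]^d}^2=\int_T\grad\vT\cdot\grad p-\int_T\grad\vT\cdot\grad(p-\vT)$ and integrate the last integral by parts. The volume contribution $-\int_T(p-\vT)\Delta\vT$ vanishes because $\Delta\vT\in\mathbb{P}^{k-3}_d\subset\Pkmd{T}$ whereas $(p-\vT)\perp_{\L[T]}\Pkmd{T}$. In the boundary contribution, on each face $\grad\vT\cdot\vec{n}_{T,F}\in\mathbb{P}^{k-2}_{d-1}(F)\subset\Pkdmo{F}$, so the orthogonality $(p-\vpT)\perp_{\L[F]}\Pkdmo{F}$ lets me replace $(p-\vT)$ by $(\vpT-\vT)$. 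Collecting,
$$
\|\grad\vT\|_{\L[T]^d}^2=\int_T\grad\vT\cdot\grad p+\int_{\partial T}(\vT-\vpT)\,\grad\vT\cdot\vec{n}_{\partial T}.
$$
Cauchy--Schwarz, together with the discrete trace inequality~\eqref{eq:tr.d} applied to the polynomial $\grad\vT\cdot\vec{n}_{T,F}$ and the jump bound from the previous paragraph, then furnishes $\|\grad\vT\|_{\L[T]^d}\leq c\,\|\grad p\|_{\L[T]^d}$.

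Adding the two estimates delivers $\|\uvT\|_T^2\leq c\,\|\grad p\|_{\L[T]^d}^2$, from which the $\alpha$-weighted lower bound follows. The key subtlety, which is also the reason no stabilization is needed in the mixed-order case, is the polynomial-degree accounting in the second step: cell degree $k{-}1$ places $\Delta\vT$ inside the kernel of $\Pi_T^{k-1}$, while face degree $k$ places $\grad\vT\cdot\vec{n}_{T,F}$ inside the kernel of $\Pi_{\partial T}^k$. Weakening either projection order would break one of these orthogonalities and force the introduction of a stabilization, as expected in the equal-order variant of Section~\ref{sec:equal_order}.
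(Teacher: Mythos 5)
Your proof is correct, and it shares the paper's overall strategy: reduce~\eqref{eq:stab_a} to the bound $\norm{T}{\uvT}\leq c\,\norm{\L[T]^d}{\grad p_{\eps,T}^{k+1}(\uvT)}$ and exploit the two projection identities~\eqref{eq:proj_reco}. The implementations differ in an instructive way, however. For the gradient term, the paper introduces the auxiliary function $v_\eps\in\VkpoT$ of~\eqref{eq:pr.coer.1} (a lifting of the data $-\triangle\vT$ and $\grad\vT{\cdot}\vec{n}_{\partial T}$ into the oscillatory space) and tests the reconstruction definition~\eqref{eq:rec.op.int} with $w_\eps=v_\eps$; you arrive at the \emph{same} key identity
$\norm{\L[T]^d}{\grad\vT}^2=\int_T\grad\vT{\cdot}\grad p_{\eps,T}^{k+1}(\uvT)+\int_{\partial T}(\vT-\vpT)\,\grad\vT{\cdot}\vec{n}_{\partial T}$
by a direct integration by parts of $\int_T\grad\vT{\cdot}\grad\left(p_{\eps,T}^{k+1}(\uvT)-\vT\right)$, annihilating the volume and face residuals through the orthogonality of $\Pi^{k-1}_T$ and $\Pi^k_F$. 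This bypasses the auxiliary lifting entirely and makes the degree bookkeeping ($\triangle\vT\in\mathbb{P}^{k-1}_d(T)$, $\grad\vT{\cdot}\vec{n}_{T,F}\in\Pkdmo{F}$) explicit; the paper's device is the more standard HHO stability argument and carries over verbatim to the equal-order case. For the jump term, your orthogonality step $\int_F\bigl(p_{\eps,T}^{k+1}(\uvT)-\vpT\bigr)(\vT-\vpT)=0$ followed by~\eqref{eq:app} (with $q=k-1$, $s=1$, $m=0$, which requires $k\geq1$ as available here) is essentially a repackaging of the paper's chain ($L^2$-stability of $\Pi^k_{\partial T}$, trace inequality~\eqref{eq:tr.c}, Poincar\'e inequality~\eqref{eq:poin}, $H^1$-stability of $\Pi^{k-1}_T$), since~\eqref{eq:app} is itself proved from those ingredients. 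One small caveat on your closing heuristic: in the equal-order case the two orthogonalities in your integration by parts actually survive (the polynomial degrees still fit inside $\mathbb{P}^{k-1}_d(T)$ and $\Pkdmo{F}$); what breaks is the identity $\vT=\Pi^{k-1}_T\bigl(p_{\eps,T}^{k+1}(\uvT)\bigr)$ on which your jump bound rests, and it is exactly this defect that the stabilization~\eqref{eq:stab.eps} controls.
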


\begin{proof}
Let $\uvT\in\UkT$. To derive an estimate on $\norm{\L[T]^d}{\grad\vT}$, we define $v_\eps\in\VkpoT$ such that
  \begin{equation} \label{eq:pr.coer.1}
    \left\{
    \begin{alignedat}{3}
      -\div(\mathbb{A}_\eps\grad v_\eps)&=-\triangle\vT\in\mathbb{P}^{k-1}_d(T)&\qquad&\text{in $T$}, \\
      \mathbb{A}_\eps\grad v_\eps{\cdot}\vec{n}_{\partial T}&=\grad\vT{\cdot}\vec{n}_{\partial T}\in\Pkdmo{\FT}&\qquad&\text{on $\partial T$},
    \end{alignedat}
    \right.
  \end{equation}
  and satisfying, e.g., $\int_Tv_\eps=0$ (the way the constant is fixed is unimportant here). Note that data in~\eqref{eq:pr.coer.1} are compatible.
  Then, the following holds:
  \begin{equation*}
    \int_T\mathbb{A}_\eps\grad v_\eps{\cdot}\grad z=\int_T \grad\vT{\cdot}\grad z \qquad \forall z\in H^1(T).
  \end{equation*}
  Using this last relation where we take $z=p_{\eps,T}^{k+1}(\uvT)$, and using~\eqref{eq:rec.op.int} where we take $w_\eps=v_\eps\in \VkpoT$ defined in~\eqref{eq:pr.coer.1}, we infer that
\begin{align*}
-\int_T\vT\,\triangle\vT+\int_{\partial T}\vpT\,\grad\vT{\cdot}&\vec{n}_{\partial T}=
-\int_T\vT\,\div(\mathbb{A}_\eps\grad v_\eps)+\int_{\partial T}\vpT\,\mathbb{A}_\eps\grad v_\eps{\cdot}\vec{n}_{\partial T} \\
&=\int_T \mathbb{A}_\eps \grad v_\eps {\cdot} \grad\vT - \int_{\partial T}(\vT-\vpT)\mathbb{A}_\eps\grad v_\eps{\cdot}\vec{n}_{\partial T} \\
&= \int_T \mathbb{A}_\eps \grad v_\eps {\cdot} \grad p_{\eps,T}^{k+1}(\uvT)
= \int_T \grad\vT{\cdot}\grad p_{\eps,T}^{k+1}(\uvT).
\end{align*}
After an integration by parts, this yields
  $$\norm{\L[T]^d}{\grad\vT}^2=\int_T \grad p_{\eps,T}^{k+1}(\uvT){\cdot}\grad\vT+\int_{\partial T}(\vT-\vpT)\grad\vT{\cdot}\vec{n}_{\partial T}.$$
  By the Cauchy--Schwarz inequality and the discrete trace inequality~\eqref{eq:tr.d}, we then obtain
  \begin{equation} \label{eq:pr.coer.2}
    \norm{\L[T]^d}{\grad\vT}\leq c\left(\alpha^{-\nicefrac12}\norm{\L[T]^d}{\mathbb{A}_\eps^{\nicefrac12}\grad p_{\eps,T}^{k+1}(\uvT)}+\norm{\L[\partial T]}{H_{\partial T}^{-\nicefrac12}(\vT-\vpT)}\right).
  \end{equation}
To bound the second term in the right-hand side, we use~\eqref{eq:proj_reco} to infer that
\begin{align*}
[\vT-\vpT]_{\mid\partial T} &= 
[\Pi_T^{k-1}\left(p_{\eps,T}^{k+1}(\uvT)\right)]_{\mid\partial T}-\Pi_{\partial T}^k\left(p_{\eps,T}^{k+1}(\uvT)\right)
\\&=\Pi_{\partial T}^k\left(\Pi_T^{k-1}\left(p_{\eps,T}^{k+1}(\uvT)\right)-p_{\eps,T}^{k+1}(\uvT)\right).
\end{align*}
Using the $L^2$-stability of $\Pi_{\partial T}^k$, the continuous trace inequality~\eqref{eq:tr.c}, the local Poincar\'e inequality~\eqref{eq:poin} (since $p_{\eps,T}^{k+1}(\uvT)-\Pi^{k-1}_T\left(p_{\eps,T}^{k+1}(\uvT)\right)$ has zero mean-value on $T$), and the $H^1$-stability of $\Pi_T^{k-1}$, we infer that
  \begin{equation} \label{eq:pr.coer.4}
    \norm{\L[\partial T]}{H_{\partial T}^{-\nicefrac12}(\vT-\vpT)}\leq c\,\alpha^{-\nicefrac12}\norm{\L[T]^d}{\mathbb{A}_\eps^{\nicefrac12}\grad p_{\eps,T}^{k+1}(\uvT)}.
  \end{equation}
This concludes the proof. 
\end{proof}

We define the skeleton $\partial\TH$ of the mesh $\TH$ as $\partial\TH\define\bigcup_{F\in\FH}F$. We introduce the broken polynomial spaces
\begin{align}
  \Pkmd{\TH}&\define\left\{v\in\L\mid v_{\mid T}\in\Pkmd{T}\;\forall T\in\TH\right\},\\\Pkdmo{\FH}&\define\left\{v\in\L[\partial\TH]\mid v_{\mid F}\in\Pkdmo{F}\;\forall F\in\FH\right\}.\label{eq:def_Pkdmo_FH}
\end{align}
The global set of discrete unknowns is defined to be
\begin{equation} \label{eq:UkH}
  \UkH\define\Pkmd{\TH}\times\Pkdmo{\FH},
\end{equation}
so that any $\uvH\in\UkH$ can be decomposed as $\uvH\define\left(\vH,\vpH\right)$.
For any given $\uvH\in\UkH$, we denote $\uvT\define\left(\vT,\vpT\right)\in\UkT$ its restriction to the mesh cell $T\in\TH$.
Note that unknowns attached to mesh interfaces are single-valued, in the sense that, for any $F\in\FHi$ such that $F=\partial T_1\cap\partial T_2\cap Z$ for $T_1,T_2\in\TH$, $\vF\define{\rm v}_{\FH\mid F}\in\Pkdmo{F}$ is such that $\vF={\rm v}_{{\cal F}_{T_1}\mid F}={\rm v}_{{\cal F}_{T_2}\mid F}$.
To take into account homogeneous Dirichlet boundary conditions, we further introduce the subspace $\UkH[H,0]\define\left\{\uvH\in\UkH\mid\vF\equiv{\rm 0}\;\forall F\in\FHb\right\}$.
We define the global bilinear form $a_{\eps,H}:\UkH\times\UkH\to\R$ such that
$$
a_{\eps,H}(\uuH,\uvH)\define\sum_{T\in\TH}a_{\eps,T}(\uuT,\uvT) = \sum_{T\in\TH} \int_T\mathbb{A}_\eps\grad p_{\eps,T}^{k+1}(\uuT){\cdot}\grad p_{\eps,T}^{k+1}(\uvT).
$$
Then, the discrete problem reads: Find $\underline{{\rm u}}_{\eps,H}\in\UkH[H,0]$ such that
\begin{equation} \label{eq:dis.pro}
  a_{\eps,H}(\underline{{\rm u}}_{\eps,H},\uvH)=\int_\Omega f\vH\qquad\forall\uvH\in\UkH[H,0].
\end{equation}
Setting $\norm{H}{\uvH}^2\define\sum_{T\in\TH}\norm{T}{\uvT}^2$ on $\UkH[H]$, with $\norm{T}{{\cdot}}$ introduced in~\eqref{eq:normT}, this defines a norm on $\UkH[H,0]$ since elements in $\UkH[H,0]$ are such that $\vF\equiv{\rm 0}$ for all $F\in\FHb$.

\begin{lemma}[Well-posedness] \label{le:well}
The following holds, for all $\uvH\in\UkH$:
\begin{equation}
a_{\eps,H}(\uvH,\uvH) = \sum_{T\in\TH} \norm{L^2(T)^d}{\mathbb{A}_\eps^{\nicefrac12} \grad p_{\eps,T}^{k+1}(\uvT)}^2 =:\norm{\eps,H}{\uvH}^2\geq c\,\alpha\norm{H}{\uvH}^2,
\end{equation}
with constant $c$ independent of $\eps$, $H$, $\alpha$ and $\beta$.
As a consequence, the discrete problem~\eqref{eq:dis.pro} is well-posed.
\end{lemma}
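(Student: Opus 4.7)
The first equality is immediate: unfolding the definition of $a_{\eps,H}$ and then of each local bilinear form $a_{\eps,T}$ gives the cellwise sum of the squared $\mathbb{A}_\eps^{\nicefrac12}$-weighted $L^2$-norms of $\grad p_{\eps,T}^{k+1}(\uvT)$, which is by definition $\norm{\eps,H}{\uvH}^2$. No computation is needed beyond writing it out.

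For the inequality, the plan is simply to sum the local stability estimate~\eqref{eq:stab_a} of Lemma~\ref{le:stab} over all $T\in\TH$. Indeed, since $a_{\eps,T}(\uvT,\uvT)\geq c\,\alpha\norm{T}{\uvT}^2$ for every $T$ with $c$ independent of $T$, summation yields $a_{\eps,H}(\uvH,\uvH)\geq c\,\alpha\sum_{T\in\TH}\norm{T}{\uvT}^2=c\,\alpha\norm{H}{\uvH}^2$, which is the claim. The dependencies on $\eps,H,\alpha,\beta$ are inherited from the local estimate.

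For the well-posedness statement, the key observation, already recorded just before the lemma, is that $\norm{H}{\cdot}$ is a genuine norm on the subspace $\UkH[H,0]$ (and not only a seminorm on $\UkH$): the homogeneous Dirichlet condition $\vF\equiv 0$ on every $F\in\FHb$ rules out nontrivial constants on boundary cells, and then the jump terms $\norm{L^2(\partial T)}{H_{\partial T}^{-\nicefrac12}(\vT-\vpT)}$ propagate this across interfaces. Combined with the inequality just proved, the bilinear form $a_{\eps,H}$ is therefore coercive on $\UkH[H,0]$ endowed with $\norm{H}{\cdot}$. Since $\UkH[H,0]$ is finite-dimensional and the right-hand side $\uvH\mapsto\int_\Omega f\vH$ is a linear form on it, the Lax--Milgram lemma (or, equivalently, the fact that a coercive square linear system is invertible) yields existence and uniqueness of the solution to~\eqref{eq:dis.pro}.

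The only non-routine ingredient is the local coercivity bound from Lemma~\ref{le:stab}, which is already established; everything else is either an unfolding of definitions or a standard sum-over-cells argument, so I do not expect any real obstacle in writing up the proof.
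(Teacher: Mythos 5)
Your proposal is correct and follows exactly the paper's route: the paper's proof is the one-line remark that the lemma is a direct consequence of Lemma~\ref{le:stab}, and your write-up simply makes explicit the summation over cells, the fact (recorded just before the lemma) that $\norm{H}{\cdot}$ is a norm on $\UkH[H,0]$, and the standard finite-dimensional coercivity argument. No discrepancy to report.
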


\begin{proof}
This is a direct consequence of Lemma~\ref{le:stab}.
\end{proof}

\begin{remark}[Non-conforming Finite Element (ncFE) formulation] \label{re:nc-mixed}
Consider the discrete space 
  \begin{equation*}
    V^{k+1}_{\eps,H,0}\define\left\{v_{\eps,H}\in\L\mid v_{\eps,H\mid T}\in\VkpoT\;\forall\,T\in\TH,\,\PikF({\llbracket v_{\eps,H}\rrbracket}_F)=0\;\forall\,F\in\FH\right\},
  \end{equation*}
where ${\llbracket{\cdot}\rrbracket}_F$ denotes the jump operator for all interfaces $F\in\FHi$ (the sign is irrelevant) and the actual trace for all boundary faces $F\in\FHb$. Consider the following ncFE method:
Find $u_{\eps,H}\in V^{k+1}_{\eps,H,0}$ such that
\begin{equation}\label{eq:NcFE}
\tilde{a}_{\eps,H}(u_{\eps,H},v_{\eps,H})=\sum_{T\in\TH}\int_T f\,\Pi_T^{k-1}(v_{\eps,H})\qquad\forall v_{\eps,H}\in V^{k+1}_{\eps,H,0},
\end{equation}
where $\tilde{a}_{\eps,H}(u_{\eps,H},v_{\eps,H})\define\sum_{T\in\TH}\int_T\mathbb{A}_\eps\grad u_{\eps,H}{\cdot}\grad v_{\eps,H}$. 
Then, using that the restriction of $\IkT$ to $\VkpoT$ 
is an isomorphism from $\VkpoT$ to $\UkT$ and that  
the restriction of $p_{\eps,T}^{k+1}\circ \IkT$ to $\VkpoT$ is the 
identity operator, it can be shown that 
$\underline{{\rm u}}_{\eps,H}$ solves~\eqref{eq:dis.pro} if and only if
$\underline{{\rm u}}_{\eps,T}=\IkT(u_{\eps,H\mid T})$ for all $T\in\TH$
where $u_{\eps,H}$ solves~\eqref{eq:NcFE}. 
This proves that~\eqref{eq:dis.pro} is indeed a high-order extension of the method in~\cite{LBLLo:13}, up to a
different treatment of the right-hand side: $\Pi_T^{k-1}(v_{\eps,H})$ is used instead of $v_{\eps,H}$.
\end{remark}

Let $u_\eps$ be the oscillatory solution to~\eqref{eq:osc} and let $\underline{{\rm u}}_{\eps,H}$ be the discrete msHHO solution to~\eqref{eq:dis.pro}. Let us define the discrete error such that
\begin{equation} \label{eq:def.err}
\ueH \in \UkH[H,0], \qquad \ueT\define \IkT u_\eps - \underline{\rm u}_{\eps,T} \quad \forall T\in\TH.
\end{equation}
Note that $\ueH$ is well-defined as a member of $\UkH[H,0]$ since the oscillatory solution $u_\eps$ is in $H^1_0(\Omega)$ and functions in $H^1_0(\Omega)$ are single-valued at interfaces and vanish at the boundary.

\begin{lemma}[Discrete energy-error estimate] \label{lem:est}
Let the discrete error $\ueH$ be defined by~\eqref{eq:def.err}.
Assume that $u_0\in H^{k+2}(\Omega)$.
Then, the following holds:
\begin{multline} \label{eq:est}
\norm{\eps,H}{\ueH} \leq c\,\rho^{\nicefrac12}
\left(\beta \sum_{T\in\TH}H_T^{2(k+1)}\senorm{H^{k+2}(T)}{u_0}^2 \right.\\\left. + \sum_{T\in\TH}\norm{\L[T]^d}{\mathbb{A}_\eps^{\nicefrac12}\grad\left(u_\eps-\pi_{\eps,T}^{k+1}(u_0)\right)}^2\right)^{\nicefrac12},
\end{multline}
with constant $c$ independent of $\eps$, $H$, $u_0$, $\alpha$ and $\beta$.
\end{lemma}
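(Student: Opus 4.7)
The strategy is a Strang-type consistency argument that leverages two key identities: the elliptic projection property~\eqref{eq:ell.proj} of $p_{\eps,T}^{k+1}\circ\IkT$, and the defining property~\eqref{eq:proj.wf} of $\pi_{\eps,T}^{k+1}(u_0)\in\VkpoT$. Denoting by ${\rm e}_T\in\Pkmd{T}$ and ${\rm e}_{\FT}\in\Pkdmo{\FT}$ the cell and face components of $\ueT$, I would first combine the definition of the energy norm, the discrete equation~\eqref{eq:dis.pro}, and~\eqref{eq:ell.proj} to obtain
\begin{equation*}
\norm{\eps,H}{\ueH}^2 = a_{\eps,H}(\ueH,\ueH) = \sum_{T\in\TH}\int_T\mathbb{A}_\eps\grad u_\eps{\cdot}\grad p_{\eps,T}^{k+1}(\ueT) - \sum_{T\in\TH}\int_T f\,{\rm e}_T.
\end{equation*}

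I would then split $u_\eps=\pi_{\eps,T}^{k+1}(u_0)+(u_\eps-\pi_{\eps,T}^{k+1}(u_0))$ in each cell. The piece stemming from $u_\eps-\pi_{\eps,T}^{k+1}(u_0)$ is controlled directly by Cauchy--Schwarz, immediately producing one of the two target contributions in the estimate. For the piece stemming from $\pi_{\eps,T}^{k+1}(u_0)$, identity~\eqref{eq:proj.wf} (with $w=p_{\eps,T}^{k+1}(\ueT)$) rewrites it as $\sum_{T\in\TH}\int_T\mathbb{A}_0\grad\Pi^{k+1}_T(u_0){\cdot}\grad p_{\eps,T}^{k+1}(\ueT)$. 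Integrating by parts cell-wise, and noting that, by degree count, $\div(\mathbb{A}_0\grad\Pi^{k+1}_T(u_0))\in\Pkmd{T}$ and $\mathbb{A}_0\grad\Pi^{k+1}_T(u_0){\cdot}\vec{n}_{\partial T}\in\Pkdmo{\FT}$, the commutation relations~\eqref{eq:proj_reco} allow one to replace $p_{\eps,T}^{k+1}(\ueT)$ by ${\rm e}_T$ in the volume term and by ${\rm e}_{\FT}$ in the boundary term. Adding and subtracting $u_0$ and invoking $-\div(\mathbb{A}_0\grad u_0)=f$, the contribution $\sum_{T\in\TH}\int_T f\,{\rm e}_T$ is exactly absorbed, while the global face term $\sum_{T\in\TH}\int_{\partial T}\mathbb{A}_0\grad u_0{\cdot}\vec{n}_{\partial T}\,{\rm e}_{\FT}$ vanishes by the single-valuedness of the normal flux of $\mathbb{A}_0\grad u_0$ on interfaces (ensured by $u_0\in H^{k+2}(\Omega)$) together with $\ueH\in\UkH[H,0]$. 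What remains is
\begin{equation*}
-\sum_{T\in\TH}\int_T\mathbb{A}_0\grad(u_0-\Pi^{k+1}_T(u_0)){\cdot}\grad{\rm e}_T + \sum_{T\in\TH}\int_{\partial T}\mathbb{A}_0\grad(u_0-\Pi^{k+1}_T(u_0)){\cdot}\vec{n}_{\partial T}({\rm e}_T-{\rm e}_{\FT}).
\end{equation*}

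These two remaining terms are bounded by Cauchy--Schwarz using $\mathbb{A}_0\in{\cal S}_\alpha^\beta$, the polynomial approximation estimate~\eqref{eq:app} with $q=k+1$ and $s=k+2$ (combined with the trace inequality~\eqref{eq:tr.c} to handle the $L^2(F)$-norm of $\grad(u_0-\Pi^{k+1}_T(u_0))$), and the definition~\eqref{eq:normT} of $\norm{T}{\cdot}$. This yields a bound of the form $c\,\beta\left(\sum_{T\in\TH}H_T^{2(k+1)}\senorm{H^{k+2}(T)}{u_0}^2\right)^{\nicefrac12}\norm{H}{\ueH}$, which I would then convert into a bound against $\norm{\eps,H}{\ueH}$ via the coercivity of Lemma~\ref{le:stab} (equivalently Lemma~\ref{le:well}), producing the factor $\alpha^{-\nicefrac12}$ and hence $\beta^{\nicefrac12}\rho^{\nicefrac12}$. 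Dividing through by $\norm{\eps,H}{\ueH}$ and repackaging via the elementary inequality $x+y\leq\sqrt{2}(x^2+y^2)^{\nicefrac12}$ (together with $\rho\geq 1$) gives the stated form of~\eqref{eq:est}. The main difficulty, I expect, lies in the middle step: the degree constraints built into $\VkpoT$ (and inherited by $\pi_{\eps,T}^{k+1}(u_0)$ through~\eqref{eq:proj}) must be aligned exactly with the commutation identities~\eqref{eq:proj_reco} so that, after integration by parts, every non-approximation contribution cancels and the estimate depends only on the two clean quantities $u_\eps-\pi_{\eps,T}^{k+1}(u_0)$ and $u_0-\Pi^{k+1}_T(u_0)$.
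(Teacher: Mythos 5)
Your proof is correct, and it reaches \eqref{eq:est} by a route that is organized differently from the paper's, even though both arguments ultimately rest on \eqref{eq:proj.wf}, the add/subtract of $\Pi^{k+1}_T(u_0)$, the approximation property \eqref{eq:app}, and the coercivity of Lemma~\ref{le:well}. The paper works with the dual characterization $\norm{\eps,H}{\ueH}=\sup_{\uvH}a_{\eps,H}(\ueH,\uvH)/\norm{\eps,H}{\uvH}$, immediately rewrites both $\int_\Omega f\vH$ and $a_{\eps,H}(\IkT u_\eps,\uvH)$ as pairings against $\grad\vT$ and $(\vT-\vpT)$ via \eqref{eq:rec.op.int}, and splits into a polynomial term $\term_1$ (identical in substance to your final remainder) and a term $\term_2$ carrying $\mathbb{A}_\eps\grad\bigl(p_{\eps,T}^{k+1}(\IkT u_\eps)-\pi_{\eps,T}^{k+1}(u_0)\bigr)$; since $\term_2$ contains a boundary pairing with $(\vT-\vpT)$, the paper must invoke the inverse inequality of Lemma~\ref{lem:inv.VkpoT} to control the normal flux of this $\VkpoT$-function, and then the best-approximation property \eqref{eq:inf} to replace $p_{\eps,T}^{k+1}(\IkT u_\eps)$ by $u_\eps$. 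You instead test directly with $\ueH$, use the elliptic-projection identity \eqref{eq:ell.proj} to keep the oscillatory consistency error in volume form against $\grad p_{\eps,T}^{k+1}(\ueT)$, and transfer to the cell/face unknowns via \eqref{eq:proj_reco} only for the polynomial field $\mathbb{A}_0\grad\Pi^{k+1}_T(u_0)$, whose divergence and normal flux have exactly the degrees $(k-1)$ and $k$ required; the oscillatory contribution is then closed by a single Cauchy--Schwarz against $\norm{\eps,H}{\ueH}$, so you need neither Lemma~\ref{lem:inv.VkpoT} nor \eqref{eq:inf}. This is a genuine economy. What the paper's organization buys in exchange is uniformity: its residual-based form carries over verbatim to the equal-order case (Lemma~\ref{lem:est.equal}) by simply appending the stabilization term $\term_3$, whereas your cancellation of $\sum_{T}\int_T f\,{\rm e}_T$ relies on $\Pi_T^{k-1}\bigl(p_{\eps,T}^{k+1}(\ueT)\bigr)={\rm e}_T$, which holds only in the mixed-order setting. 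All your individual steps check out: the starting identity follows from \eqref{eq:dis.pro} and \eqref{eq:ell.proj}; the interface fluxes of $\mathbb{A}_0\grad u_0$ vanish against the single-valued face unknowns by $u_0\in H^2(\Omega)$ and $\ueH\in\UkH[H,0]$, exactly as in the paper; and the constants ($\beta\alpha^{-\nicefrac12}=\beta^{\nicefrac12}\rho^{\nicefrac12}$, with $\rho\ge1$ absorbing the oscillatory contribution) reproduce \eqref{eq:est}.
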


\begin{proof}
Lemma~\ref{le:well} implies that
\begin{equation} \label{eq:pr.th.1}
\norm{\eps,H}{\ueH}=\sup_{\uvH\in\UkH[H,0]}\frac{a_{\eps,H}(\ueH,\uvH)}{\norm{\eps,H}{\uvH}}.
\end{equation}
Let $\uvH\in\UkH[H,0]$.
Performing an integration by parts, and using the facts that the flux $\mathbb{A}_0\grad u_0{\cdot}\vec{n}_F$ is continuous accross any interface $F\in\FHi$ since $u_0\in H^2(\Omega)$, and that $\uvH\in\UkH[H,0]$, we infer that
\begin{multline} \label{eq:pr.th.2}
    a_{\eps,H}(\underline{{\rm u}}_{\eps,H},\uvH)=\int_\Omega f\vH=\sum_{T\in\TH}\int_T\mathbb{A}_0\grad u_0{\cdot}\grad\vT\\-\sum_{T\in\TH}\int_{\partial T}(\vT-\vpT)\mathbb{A}_0\grad u_0{\cdot}\vec{n}_{\partial T}.
\end{multline}
Using~\eqref{eq:rec.op.int} with $w_\eps=p_{\eps,T}^{k+1}(\IkT u_\eps)$, we then infer that
\begin{multline*} 
a_{\eps,H}(\ueH,\uvH) =\sum_{T\in\TH}\int_T\left(\mathbb{A}_\eps\grad p_{\eps,T}^{k+1}(\IkT u_\eps)-\mathbb{A}_0\grad u_0\right){\cdot}\grad\vT\\-\sum_{T\in\TH}\int_{\partial T}\left(\mathbb{A}_\eps\grad p_{\eps,T}^{k+1}(\IkT u_\eps)-\mathbb{A}_0\grad u_0\right){\cdot}\vec{n}_{\partial T}(\vT-\vpT).
\end{multline*}
Adding/subtracting $\Pi^{k+1}_T (u_0)$ in the right-hand side yields
$a_{\eps,H}(\ueH,\uvH) = \term_1+\term_2$ with
\begin{align*}
 \term_1={}&\sum_{T\in\TH}\int_T\mathbb{A}_0\grad\left(\Pi^{k+1}_T (u_0)-u_0\right){\cdot}\grad\vT \\ &-\sum_{T\in\TH}\int_{\partial T}\mathbb{A}_0\grad\left(\Pi^{k+1}_T (u_0)-u_0\right){\cdot}\vec{n}_{\partial T}(\vT-\vpT),\\
\term_2={}&\sum_{T\in\TH}\int_T\left(\mathbb{A}_\eps\grad p_{\eps,T}^{k+1}(\IkT u_\eps)-\mathbb{A}_0\grad\Pi^{k+1}_T (u_0)\right){\cdot}\grad\vT\\
&-\sum_{T\in\TH}\int_{\partial T}\big(\mathbb{A}_\eps\grad p_{\eps,T}^{k+1}(\IkT u_\eps)-\mathbb{A}_0\grad\Pi^{k+1}_T (u_0)\big){\cdot}\vec{n}_{\partial T}(\vT-\vpT).
\end{align*}
The term $\term_1$ is estimated using Cauchy--Schwarz inequality and the approximation properties~\eqref{eq:app} of the projector $\Pi^{k+1}_T$ for $m=1$ and $s=k+2$, yielding
\begin{equation*} 
    \left|\term_1\right|\leq c\,\beta{\left(\sum_{T\in\TH}H_T^{2(k+1)}\senorm{H^{k+2}(T)}{u_0}^2\right)}^{\nicefrac12}\norm{H}{\uvH}.
\end{equation*}
Considering now $\term_2$, we use the definition~\eqref{eq:proj} of $\pi_{\eps,T}^{k+1}(u_0)$ and the relation~\eqref{eq:proj.wf} to infer that
\begin{align*}
    \term_2={}&\sum_{T\in\TH}\int_T\mathbb{A}_\eps\grad\left(p_{\eps,T}^{k+1}(\IkT u_\eps)-\pi_{\eps,T}^{k+1}(u_0)\right){\cdot}\grad\vT\\&-\sum_{T\in\TH}\int_{\partial T}\mathbb{A}_\eps\grad\left(p_{\eps,T}^{k+1}(\IkT u_\eps)-\pi_{\eps,T}^{k+1}(u_0)\right){\cdot}\vec{n}_{\partial T}(\vT-\vpT).
\end{align*}
The first term in the right-hand side can be bounded using the Cauchy--Schwarz inequality, whereas the second term is estimated by means of the inverse inequality from Lemma~\ref{lem:inv.VkpoT} since $\left(p_{\eps,T}^{k+1}(\IkT u_\eps)-\pi_{\eps,T}^{k+1}(u_0)\right)\in\VkpoT$. This yields
\begin{align*} 
    \left|\term_2\right|&\leq c\,\beta^{\nicefrac12}{\left(\sum_{T\in\TH}\norm{\L[T]^d}{\mathbb{A}_\eps^{\nicefrac12}\grad\left(p_{\eps,T}^{k+1}(\IkT u_\eps)-\pi_{\eps,T}^{k+1}(u_0)\right)}^2\right)}^{\nicefrac12}\norm{H}{\uvH} \\
&\leq c\,\beta^{\nicefrac12}{\left(\sum_{T\in\TH}\norm{\L[T]^d}{\mathbb{A}_\eps^{\nicefrac12}\grad\left(u_\eps-\pi_{\eps,T}^{k+1}(u_0)\right)}^2\right)}^{\nicefrac12}\norm{H}{\uvH},
\end{align*}
where the last bound follows from~\eqref{eq:inf} since $\pi_{\eps,T}^{k+1}(u_0)\in\VkpoT$.
Since $\norm{\eps,H}{\uvH}^2 \geq c\,\alpha\norm{H}{\uvH}^2$ owing to Lemma~\ref{le:well}, 
we obtain the expected bound.
\end{proof}

\begin{theorem}[Energy-error estimate] \label{th:est.err}
Assume that the correctors $\mu_{l}$ are in $W^{1,\infty}(\R^d)$ for any $1\leq l\leq d$, and that $u_0\in H^{k+2}(\Omega)$ (recall that $k\geq 1$).
Then, the following holds:
\begin{multline} \label{eq:est.2}
    \hspace{-0.35cm}{\left(\sum_{T\in\TH}\norm{L^2(T)^d}{\mathbb{A}_\eps^{\nicefrac12}\grad\left(u_\eps-p_{\eps,T}^{k+1}(\uueT)\right)}^2\right)}^{\nicefrac12}\leq c\,\beta^{\nicefrac12}\rho\bigg(
\sum_{T\in\TH}H_T^{2(k+1)}\senorm{H^{k+2}(T)}{u_0}^2 \\
+ \eps|\partial\Omega|\senorm{W^{1,\infty}(\Omega)}{u_0}^2
+ \sum_{T\in\TH} \left[\eps^2\senorm{H^2(T)}{u_0}^2+\,\eps|\partial T|\senorm{W^{1,\infty}(T)}{u_0}^2\right]\bigg)^{\nicefrac12},
\end{multline}
with $c$ independent of $\eps$, $H$, $u_0$, $\alpha$ and $\beta$. In particular, if the mesh $\TH$ is quasi-uniform, and tracking for simplicity only the dependency on $\eps$ and $H$ with $\eps\leq H\leq \ell_\Omega$ ($\ell_\Omega$ denotes the diameter of $\Omega$), we obtain an energy-error upper bound of the form $(\eps^{\nicefrac12} + H^{k+1} + (\eps/H)^{\nicefrac12})$.
\end{theorem}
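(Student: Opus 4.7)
The plan is to reduce the global energy error to the sum of a discrete-error contribution (controlled by Lemma~\ref{lem:est}) and a best-approximation contribution (controlled by Lemma~\ref{lem:app.VkpoT} plus a classical two-scale boundary layer estimate).

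First, I would exploit the fact that $p_{\eps,T}^{k+1}\circ\IkT:H^1(T)\to\VkpoT$ is the $\mathbb{A}_\eps$-weighted elliptic projector (cf.~\eqref{eq:ell.proj}). Setting $w_\eps=p_{\eps,T}^{k+1}(\IkT u_\eps)-p_{\eps,T}^{k+1}(\uueT)\in\VkpoT$ in~\eqref{eq:ell.proj} gives the $\mathbb{A}_\eps$-orthogonality on each $T\in\TH$ that yields the Pythagoras-type identity
\[
\norm{L^2(T)^d}{\mathbb{A}_\eps^{\nicefrac12}\grad(u_\eps-p_{\eps,T}^{k+1}(\uueT))}^2
=\norm{L^2(T)^d}{\mathbb{A}_\eps^{\nicefrac12}\grad(u_\eps-p_{\eps,T}^{k+1}(\IkT u_\eps))}^2+\norm{L^2(T)^d}{\mathbb{A}_\eps^{\nicefrac12}\grad p_{\eps,T}^{k+1}(\ueT)}^2,
\]
where linearity of $p_{\eps,T}^{k+1}$ is used in the second summand. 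Summing over $T\in\TH$, the second contribution equals $\norm{\eps,H}{\ueH}^2$, which is bounded via Lemma~\ref{lem:est}. For the first contribution, the characterization~\eqref{eq:inf} of the elliptic projector and the fact that $\pi_{\eps,T}^{k+1}(u_0)\in\VkpoT$ yield
\[
\norm{L^2(T)^d}{\mathbb{A}_\eps^{\nicefrac12}\grad(u_\eps-p_{\eps,T}^{k+1}(\IkT u_\eps))}\le \norm{L^2(T)^d}{\mathbb{A}_\eps^{\nicefrac12}\grad(u_\eps-\pi_{\eps,T}^{k+1}(u_0))}.
\]
Hence the whole estimate reduces to controlling $\sum_{T\in\TH}\norm{L^2(T)^d}{\mathbb{A}_\eps^{\nicefrac12}\grad(u_\eps-\pi_{\eps,T}^{k+1}(u_0))}^2$, which is precisely the quantity also appearing on the right-hand side of Lemma~\ref{lem:est}.

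To handle this quantity, I would insert the first-order two-scale expansion and use the triangle inequality:
\[
u_\eps-\pi_{\eps,T}^{k+1}(u_0)=\bigl(u_\eps-\Leps(u_0)\bigr)+\bigl(\Leps(u_0)-\pi_{\eps,T}^{k+1}(u_0)\bigr).
\]
The local term $\Leps(u_0)-\pi_{\eps,T}^{k+1}(u_0)$ is bounded cell by cell by Lemma~\ref{lem:app.VkpoT}, producing the $H_T^{k+1}\senorm{H^{k+2}(T)}{u_0}$, $\eps\senorm{H^2(T)}{u_0}$ and $\eps^{\nicefrac12}|\partial T|^{\nicefrac12}\senorm{W^{1,\infty}(T)}{u_0}$ contributions (after squaring and summing). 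For $u_\eps-\Leps(u_0)$, I would invoke the classical periodic boundary-layer estimate from Appendix~\ref{ap:est}, which provides the global bound
\[
\norm{L^2(\Omega)^d}{\mathbb{A}_\eps^{\nicefrac12}\grad(u_\eps-\Leps(u_0))}\le c\,\beta^{\nicefrac12}\eps^{\nicefrac12}|\partial\Omega|^{\nicefrac12}\senorm{W^{1,\infty}(\Omega)}{u_0},
\]
which, because it is global, accounts for the non-vanishing trace of $u_\eps-\Leps(u_0)$ on $\partial\Omega$ and explains the presence of the global term $\eps|\partial\Omega|\senorm{W^{1,\infty}(\Omega)}{u_0}^2$ in~\eqref{eq:est.2}. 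Collecting everything and absorbing constants via the heterogeneity ratio $\rho$ yields the stated bound (the extra power of $\rho^{\nicefrac12}$ compared to Lemma~\ref{lem:app.VkpoT} comes from combining it with the coercivity loss in Lemma~\ref{lem:est}).

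The main obstacle, and the reason for the slightly involved Pythagoras step at the beginning, is that Lemma~\ref{lem:est} only controls the distance between two elements of the discrete virtual space $\VkpoT$, so one cannot naively use it to bound $u_\eps-p_{\eps,T}^{k+1}(\uueT)$ directly; the elliptic-projection characterization is precisely what allows to split off the non-discrete part $u_\eps-p_{\eps,T}^{k+1}(\IkT u_\eps)$ without losing optimality. The quasi-uniform corollary is then immediate: each of the three error contributions scales, respectively, like $H^{k+1}$, $\eps^{\nicefrac12}$ (from the boundary layer and the $\eps\senorm{H^2(T)}{u_0}$ term), and $(\eps/H)^{\nicefrac12}$ (from $\eps\sum_{T\in\TH}|\partial T|\sim\eps|\Omega|/H$).
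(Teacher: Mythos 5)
Your proposal is correct and follows essentially the same route as the paper: split off $u_\eps-p_{\eps,T}^{k+1}(\IkT u_\eps)$, bound it via the best-approximation property~\eqref{eq:inf} with $\pi_{\eps,T}^{k+1}(u_0)$ as competitor, control the discrete part by Lemma~\ref{lem:est}, and conclude by adding/subtracting $\Leps(u_0)$ and invoking Lemmas~\ref{lem:app.VkpoT} and~\ref{le:ts}. The only (cosmetic) differences are that the paper uses the plain triangle inequality where you use the Pythagoras identity stemming from~\eqref{eq:ell.proj}, and that your quoted form of the global two-scale estimate omits the $\rho^{\nicefrac12}\eps\,\senorm{H^2(\Omega)}{u_0}$ term of Lemma~\ref{le:ts}, which is harmless since that contribution is absorbed by the $\eps^2\senorm{H^2(T)}{u_0}^2$ terms already present in~\eqref{eq:est.2}.
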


\begin{proof}
Using the shorthand notation $e_{\eps,T}\define u_\eps|_T-p_{\eps,T}^{k+1}(\uueT)$ for all $T\in\TH$, the triangle inequality implies that
\begin{multline*}
{\left(\sum_{T\in\TH}\norm{L^2(T)^d}{\mathbb{A}_\eps^{\nicefrac12}\grad e_{\eps,T}}^2\right)}^{\nicefrac12} \leq {\left(\sum_{T\in\TH}\norm{L^2(T)^d}{\mathbb{A}_\eps^{\nicefrac12}\grad \left(u_\eps-p_{\eps,T}^{k+1}(\IkT u_\eps)\right)}^2\right)}^{\nicefrac12} \\+ \norm{\eps,H}{\ueH},
\end{multline*}
and owing to~\eqref{eq:inf}, we infer that
\begin{multline*}
{\left(\sum_{T\in\TH}\norm{L^2(T)^d}{\mathbb{A}_\eps^{\nicefrac12}\grad e_{\eps,T}}^2\right)}^{\nicefrac12} \leq {\left(\sum_{T\in\TH}\norm{L^2(T)^d}{\mathbb{A}_\eps^{\nicefrac12}\grad \left(u_\eps-\pi_{\eps,T}^{k+1}(u_0)\right)}^2\right)}^{\nicefrac12} \\+ \norm{\eps,H}{\ueH}.
\end{multline*}
Lemma~\ref{lem:est} then implies that
\begin{multline*}
{\left(\sum_{T\in\TH}\norm{L^2(T)^d}{\mathbb{A}_\eps^{\nicefrac12}\grad e_{\eps,T}}^2\right)}^{\nicefrac12} \leq \\ c\,\rho^{\nicefrac12}
{\left(\beta \sum_{T\in\TH}H_T^{2(k+1)}\senorm{H^{k+2}(T)}{u_0}^2 + \sum_{T\in\TH}\norm{\L[T]^d}{\mathbb{A}_\eps^{\nicefrac12}\grad\left(u_\eps-\pi_{\eps,T}^{k+1}(u_0)\right)}^2\right)}^{\nicefrac12}.
\end{multline*}
To conclude the proof of~\eqref{eq:est.2}, we add/subtract $\Leps(u_0)$ in the last term in the right-hand side, 
and invoke the triangle inequality together with Lemma~\ref{le:ts} to bound $(u_\eps-\Leps(u_0))$ globally on $\Omega$ and Lemma~\ref{lem:app.VkpoT} to bound $(\Leps(u_0)-\pi_{\eps,T}^{k+1}(u_0))$ locally on all $T\in\TH$. Finally, to derive the upper bound for quasi-uniform meshes, we observe that the last term in~\eqref{eq:est.2} can be estimated as $\sum_{T\in\TH} \eps|\partial T|\senorm{W^{1,\infty}(T)}{u_0}^2\le c\,\eps H^{-1}|u_0|_{W^{1,\infty}(\Omega)}^2 \sum_{T\in\TH} |\partial T| H_T \le c' \eps H^{-1}|u_0|_{W^{1,\infty}(\Omega)}^2$ with $c'$ proportional to $|\Omega|$.
\end{proof}

\begin{remark}[Dependency on $\rho$] \label{rem.rho}
The estimate~\eqref{eq:est.2} has a linear dependency with respect to the (global) heterogeneity/anisotropy ratio $\rho$ (a close inspection of the proof shows that the term $\eps^{\nicefrac12}|\partial\Omega|^{\nicefrac12}\senorm{W^{1,\infty}(\Omega)}{u_0}$ only scales with $\rho^{\nicefrac12}$). This linear scaling is also obtained with the monoscale HHO method when the diffusivity is non-constant in each mesh cell; cf.~\cite[Theorem 3.1]{DPELe:16}.
\end{remark}

\begin{remark}[Discretization of the right-hand side] \label{rem:disc_rhs}
Note that we could also integrate the right-hand side in~\eqref{eq:dis.pro} using $p_{\eps,T}^{k+1}(\uvT)$ instead of $\vT$ on each $T\in\TH$, up to the addition in the right-hand sides of the bounds~\eqref{eq:est} and~\eqref{eq:est.2} of the optimally convergent term $c\,\alpha^{-\nicefrac12}{\left(\sum_{T\in\TH}H_T^{2(k+1)}\senorm{H^k(T)}{f}^2\right)}^{\nicefrac12}$.
Indeed, owing to~\eqref{eq:proj_reco}, we have
$$\sum_{T\in\TH}\int_T f\,(\vT-p_{\eps,T}^{k+1}(\uvT))=\sum_{T\in\TH}\int_T (f-\Pi^{k-1}_T(f))\,(\vT-p_{\eps,T}^{k+1}(\uvT)),$$
which can be estimated by applying Cauchy--Schwarz inequality on each $T$, and \begin{inparaenum} \item[(i)] the approximation properties~\eqref{eq:app} of $\Pi^{k-1}_T$ with $m=0$ and $s=k$ for the first factor, \item[(ii)] the Poincar\'e inequality~\eqref{eq:poin} (recall that $(\vT-p_{\eps,T}^{k+1}(\uvT))$ has zero-mean on $T$) and the triangle inequality combined with Lemma~\ref{le:stab} for the second factor. \end{inparaenum}  
This alternative approach, that is pursued in~\cite{LBLLo:13,LBLLo:14}, necessitates an integration against oscillatory test functions. It is hence computationally more expensive (recall that $f$ is assumed to be non-oscillatory), and may become limiting in a multi-query context.
\end{remark}

\subsection{The equal-order case}
\label{sec:equal_order}

Let $k\geq0$. For all $T\in\TH$, we consider now the following local set of discrete unknowns:

\begin{equation} \label{eq:UkT.equal}
  \UkT\define\Pkd{T}\times\Pkdmo{\FT}.
\end{equation}
Any element $\uvT\in\UkT$ is again decomposed as $\uvT\define(\vT,\vpT)$, and for any $F\in\FT$, we denote $\vF\define{\rm v}_{\FT\mid F}\in\Pkdmo{F}$.
We redefine the local reduction operator $\IkT:H^1(T)\to\UkT$ so that, for any $v\in H^1(T)$, $\IkT v\define(\Pi^k_T(v),\Pi^k_{\partial T}(v))$.
Reasoning as in \cite[Section 2.4]{CoDPE:16}, 
it can be proved that, for all $T\in\TH$, the restriction of $\IkT$ to $\tilde{V}^{k+1}_{\eps,T}$ is an isomorphism from $\tilde{V}^{k+1}_{\eps,T}$ to $\UkT$, where
\begin{equation}
    \tilde{V}^{k+1}_{\eps,T}\define\left\{v_\eps\in H^1(T)\mid\div(\mathbb{A}_\eps\grad v_\eps)\in\mathbb{P}^k_d(T),\,\mathbb{A}_\eps\grad v_\eps{\cdot}\vec{n}_{\partial T}\in\Pkdmo{\FT}\right\}.
\end{equation}
Thus, the triple $(T,\tilde{V}^{k+1}_{\eps,T},\IkT)$ defines a finite element in the sense of Ciarlet.

The local multiscale reconstruction operator $p_{\eps,T}^{k+1}:\UkT\to\VkpoT$ is still defined as in~\eqref{eq:rec.op}, so that the key relations~\eqref{eq:ell.proj} and~\eqref{eq:inf} still hold. In particular, $p_{\eps,T}^{k+1}\circ \IkT:H^1(T) \to \VkpoT$ is the $\mathbb{A}_\eps$-weighted elliptic projection. However, the restriction of $p_{\eps,T}^{k+1}\circ \IkT$ to the larger space
$\tilde{V}^{k+1}_{\eps,T}$ is {\em not} the identity operator since $p_{\eps,T}^{k+1}$ maps onto the smaller space $\VkpoT$.
Concerning~\eqref{eq:proj_reco}, we still have $\Pi_{\partial T}^k\left(p_{\eps,T}^{k+1}(\uvT)\right)=\vpT$, but now $\Pi_T^{k-1}\left(p_{\eps,T}^{k+1}(\uvT)\right)=\Pi_T^{k-1}(\vT)$ is in general different from $\vT$.
 
This leads us to introduce the 
symmetric, positive semi-definite stabilization
\begin{equation} \label{eq:stab.eps}
j_{\eps,T}(\uuT,\uvT)\define\alpha\int_{\partial T}H^{-1}_{\partial T}\left(\uT-\Pi^k_T\left(p_{\eps,T}^{k+1}(\uuT)\right)\right)\left(\vT-\Pi^k_T\left(p_{\eps,T}^{k+1}(\uvT)\right)\right).
\end{equation}
The local bilinear form $a_{\eps,T}:\UkT\times\UkT\to\R$ is then defined as
$$
a_{\eps,T}(\uuT,\uvT)\define\int_T\mathbb{A}_\eps\grad p_{\eps,T}^{k+1}(\uuT){\cdot}\grad p_{\eps,T}^{k+1}(\uvT)+j_{\eps,T}(\uuT,\uvT).
$$

\begin{remark}[Variant]
Alternatively, one can discard the stabilization at the price of computing additional cell-based oscillatory basis functions, using the basis functions ${(\Phi_T^{k,i})}_{1\leq i\leq {\rm N}^k_d}$ instead of ${(\Phi_T^{k-1,i})}_{1\leq i\leq {\rm N}^{k-1}_d}$ as proposed in Section~\ref{ssse:bas}. This is the approach pursued in~\cite{LBLLo:14} for $k=0$ where one cell-based oscillatory basis function is added (in the slightly different context of perforated domains). 
\end{remark}

Recall the local stability semi-norm $\norm{T}{\cdot}$ defined by~\eqref{eq:normT}.
  
\begin{lemma}[Local stability and approximation] \label{lemma.sta}
The following holds:
\begin{equation} \label{eq:stab_j}
a_{\eps,T}(\uvT,\uvT)\geq c\,\alpha\norm{T}{\uvT}^2\qquad \forall \uvT\in\UkT.
\end{equation}
Moreover, for all $v\in H^1(T)$,
  \begin{equation} \label{eq:jeps}
    j_{\eps,T}(\IkT v,\IkT v)^{\nicefrac12}\leq c\,\norm{\L[T]^d}{\mathbb{A}_\eps^{\nicefrac{1}{2}}\grad\left(v-p_{\eps,T}^{k+1}(\IkT v)\right)},
  \end{equation}
  with (distinct) constants $c$ independent of $\eps$, $H_T$, $\alpha$ and $\beta$.
\end{lemma}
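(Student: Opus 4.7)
The two estimates are addressed separately: for~\eqref{eq:stab_j}, I would adapt the proof of Lemma~\ref{le:stab} to account for the fact that in the equal-order setting $p_{\eps,T}^{k+1}\circ\IkT$ is no longer the identity on $\tilde{V}^{k+1}_{\eps,T}$, a mismatch precisely designed to be absorbed by the stabilization $j_{\eps,T}$; for~\eqref{eq:jeps}, I would exploit the mean-value preservation of $p_{\eps,T}^{k+1}\circ\IkT$ together with a Poincar\'e argument.

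For~\eqref{eq:stab_j}, pick $\uvT=(\vT,\vpT)\in\UkT$. Since $\vT\in\Pkd{T}$ still satisfies $-\triangle\vT\in\mathbb{P}^{k-2}_d(T)\subset\mathbb{P}^{k-1}_d(T)$ and $\grad\vT\cdot\vec{n}_{T,F}\in\mathbb{P}^{k-1}_{d-1}(F)\subset\Pkdmo{F}$, the Neumann problem defining $v_\eps\in\VkpoT$ in~\eqref{eq:pr.coer.1} is still admissible and compatible. Repeating verbatim the chain of manipulations performed in the proof of Lemma~\ref{le:stab} (in particular, using the identity $\int_T\mathbb{A}_\eps\grad v_\eps\cdot\grad z=\int_T\grad\vT\cdot\grad z$ for all $z\in H^1(T)$ with $z=p_{\eps,T}^{k+1}(\uvT)$, and testing \eqref{eq:rec.op.int} with $w_\eps=v_\eps$), one arrives at the analogue of~\eqref{eq:pr.coer.2},
\[
\norm{\L[T]^d}{\grad\vT}\leq c\,\bigl(\alpha^{-\nicefrac12}\norm{\L[T]^d}{\mathbb{A}_\eps^{\nicefrac12}\grad p_{\eps,T}^{k+1}(\uvT)}+\norm{\L[\partial T]}{H^{-\nicefrac12}_{\partial T}(\vT-\vpT)}\bigr).
\]
The new ingredient is the treatment of $\vT-\vpT$. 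Using $\vpT=\Pi^k_{\partial T}(p_{\eps,T}^{k+1}(\uvT))$ and that $\Pi^k_F$ leaves $\mathbb{P}^k_{d-1}(F)$ invariant, I would split, on each face $F\in\FT$,
\[
\vT-\vpT=\bigl(\vT-\Pi^k_T(p_{\eps,T}^{k+1}(\uvT))\bigr)_{|F}+\Pi^k_F\bigl(\Pi^k_T(p_{\eps,T}^{k+1}(\uvT))-p_{\eps,T}^{k+1}(\uvT)\bigr).
\]
The first summand contributes exactly $\alpha^{-\nicefrac12}j_{\eps,T}(\uvT,\uvT)^{\nicefrac12}$, while the second is bounded by $c\,\alpha^{-\nicefrac12}\norm{\L[T]^d}{\mathbb{A}_\eps^{\nicefrac12}\grad p_{\eps,T}^{k+1}(\uvT)}$ by invoking (in order) the $L^2$-stability of $\Pi^k_{\partial T}$, the continuous trace inequality~\eqref{eq:tr.c}, the local Poincar\'e inequality~\eqref{eq:poin} (applicable since $p_{\eps,T}^{k+1}(\uvT)-\Pi^k_T(p_{\eps,T}^{k+1}(\uvT))$ has zero mean on $T$), and the $H^1$-stability of $\Pi^k_T$. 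Combining the two bounds and absorbing the boundary term on the left yields~\eqref{eq:stab_j}.

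For~\eqref{eq:jeps}, set $\phi\define v-p_{\eps,T}^{k+1}(\IkT v)$. The mean-value preservation of $p_{\eps,T}^{k+1}$ combined with $\int_T\Pi^k_T v=\int_T v$ gives $\int_T\phi=0$. Since $\Pi^k_T v-\Pi^k_T(p_{\eps,T}^{k+1}(\IkT v))=\Pi^k_T\phi\in\Pkd{T}$, the discrete trace inequality~\eqref{eq:tr.d} together with mesh regularity yields $\norm{\L[\partial T]}{H^{-\nicefrac12}_{\partial T}\Pi^k_T\phi}\leq c\,H_T^{-1}\norm{\L[T]}{\Pi^k_T\phi}$, and then the $L^2$-stability of $\Pi^k_T$ followed by the Poincar\'e inequality~\eqref{eq:poin} successively give
\[
j_{\eps,T}(\IkT v,\IkT v)^{\nicefrac12}=\alpha^{\nicefrac12}\norm{\L[\partial T]}{H^{-\nicefrac12}_{\partial T}\Pi^k_T\phi}\leq c\,\alpha^{\nicefrac12}\norm{\L[T]^d}{\grad\phi}\leq c\,\norm{\L[T]^d}{\mathbb{A}_\eps^{\nicefrac12}\grad\phi},
\]
which is the announced bound.

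The only non-routine step is the face-by-face decomposition of $\vT-\vpT$ in the first part; once the correct splitting (which matches exactly the design of $j_{\eps,T}$) is in place, the remaining manipulations are a direct replay of standard HHO-stability tools and of the Poincar\'e argument, so no further technical obstacle is expected.
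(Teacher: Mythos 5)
Your proposal is correct and follows essentially the same route as the paper: for~\eqref{eq:stab_j} it reuses the bound~\eqref{eq:pr.coer.2} and splits $\vT-\vpT$ by adding/subtracting $\Pi^k_T\bigl(p_{\eps,T}^{k+1}(\uvT)\bigr)$ so that the first piece is exactly $\alpha^{-\nicefrac12}j_{\eps,T}(\uvT,\uvT)^{\nicefrac12}$ and the second is handled as in~\eqref{eq:pr.coer.4}, and for~\eqref{eq:jeps} it uses the same discrete trace, $L^2$-stability, and Poincar\'e chain based on the zero mean of $v-p_{\eps,T}^{k+1}(\IkT v)$. The only cosmetic difference is that you write the decomposition face by face using the invariance of $\Pi^k_F$ on $\mathbb{P}^k_{d-1}(F)$, whereas the paper phrases it through $\Pi^k_{\partial T}$ globally; the arguments are equivalent.
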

\begin{proof}
To prove stability, we adapt the proof of Lemma~\ref{le:stab}.
Let $\uvT\in\UkT$. The bound~\eqref{eq:pr.coer.2} on $\norm{\L[T]^d}{\grad\vT}$ still
holds, so that we only need to bound $\norm{\L[\partial T]}{H_{\partial T}^{-\nicefrac12}(\vT-\vpT)}$. Since $\Pi_{\partial T}^k\left(p_{\eps,T}^{k+1}(\uvT)\right)=\vpT$, we infer that
$(\vT-\vpT) = \Pi_{\partial T}^k\left(\vT-p_{\eps,T}^{k+1}(\uvT)\right)$, so that invoking the $L^2$-stability of $\Pi_{\partial T}^k$ and the triangle inequality while adding/subtracting $\Pi^k_T\left(p_{\eps,T}^{k+1}(\uvT)\right)$, we obtain
\begin{align*}
\norm{\L[\partial T]}{H_{\partial T}^{-\nicefrac12}(\vT-\vpT)} \leq {}&\norm{\L[\partial T]}{H_{\partial T}^{-\nicefrac12}\left(\vT-\Pi^k_T\left(p_{\eps,T}^{k+1}(\uvT)\right)\right)} \\&
+ \norm{\L[\partial T]}{H_{\partial T}^{-\nicefrac12}\left(p_{\eps,T}^{k+1}(\uvT)-\Pi^k_T\left(p_{\eps,T}^{k+1}(\uvT)\right)\right)}.
\end{align*}
The first term in the right-hand side is bounded by $\alpha^{-\nicefrac12}j_{\eps,T}(\uvT,\uvT)^{\nicefrac12}$, and the second one has been bounded (with the use of $\Pi^{k-1}_T$ instead of $\Pi^k_T$) in the proof of Lemma~\ref{le:stab} (see~\eqref{eq:pr.coer.4}) by  
$c\,\alpha^{-\nicefrac12}\norm{\L[T]^d}{\mathbb{A}_\eps^{\nicefrac12}\grad p_{\eps,T}^{k+1}(\uvT)}$.
To prove~\eqref{eq:jeps}, we start from
$$j_{\eps,T}(\IkT v,\IkT v)=\alpha\norm{\L[\partial T]}{H_{\partial T}^{-\nicefrac12}\Pi^k_T\left(v-p_{\eps,T}^{k+1}(\IkT v)\right)}^2.$$
The result then follows from the application of the discrete trace inequality~\eqref{eq:tr.d}, the $L^2$-stability property of $\Pi^k_T$, and the local Poincar\'e inequality~\eqref{eq:poin} (since $\int_Tp_{\eps,T}^{k+1}(\IkT v)=\int_T v$).
\end{proof}
We define the broken polynomial space
\begin{equation*}
  \Pkd{\TH}\define\left\{v\in\L\mid v_{\mid T}\in\Pkd{T}\;\forall T\in\TH\right\}, 
\end{equation*}
and the global set of discrete unknowns is defined to be
\begin{equation} \label{eq:UkH.equal}
  \UkH\define\Pkd{\TH}\times\Pkdmo{\FH},
\end{equation}
where $\Pkdmo{\FH}$ is still defined by~\eqref{eq:def_Pkdmo_FH}.
To take into account homogeneous Dirichlet boundary conditions, we consider again the subspace $\UkH[H,0]\define\big\{\uvH\in\UkH\mid\vF\equiv{\rm 0}\;\forall F\in\FHb\big\}$.
We define the global bilinear form $a_{\eps,H}:\UkH\times\UkH\to\R$ such that
\begin{multline*}
  a_{\eps,H}(\uuH,\uvH)\define\sum_{T\in\TH}a_{\eps,T}(\uuT,\uvT)= \sum_{T\in\TH} \bigg( \int_T\mathbb{A}_\eps\grad p_{\eps,T}^{k+1}(\uuT){\cdot}\grad p_{\eps,T}^{k+1}(\uvT) \\ + j_{\eps,T}(\uuT,\uvT)\bigg).
\end{multline*}
Then, the discrete problem reads: Find $\underline{{\rm u}}_{\eps,H}\in\UkH[H,0]$ such that
\begin{equation} \label{eq:dis.pro.equal}
  a_{\eps,H}(\underline{{\rm u}}_{\eps,H},\uvH)=\int_\Omega f\vH\qquad\forall\uvH\in\UkH[H,0].
\end{equation}
Recalling the norm $\norm{H}{\uvH}^2\define\sum_{T\in\TH}\norm{T}{\uvT}^2$ on $\UkH[H,0]$, we readily infer from Lemma~\ref{lemma.sta} the following well-posedness result.

\begin{lemma}[Well-posedness] \label{le:well.equal}
  The following holds, for all $\uvH\in\UkH$:
\begin{align}
a_{\eps,H}(\uvH,\uvH) &=\sum_{T\in\TH} \left( \norm{L^2(T)^d}{\mathbb{A}_\eps^{\nicefrac12} \grad p_{\eps,T}^{k+1}(\uvT)}^2 + j_{\eps,T}(\uvT,\uvT)\right) \nonumber\\
&=:\norm{\eps,H}{\uvH}^2\geq c\,\alpha\norm{H}{\uvH}^2,
\end{align}
with constant $c$ independent of $\eps$, $H$, $\alpha$ and $\beta$.
As a consequence, the discrete problem~\eqref{eq:dis.pro.equal} is well-posed.
\end{lemma}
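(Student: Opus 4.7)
The plan is to derive this well-posedness result as a direct consequence of the local estimate~\eqref{eq:stab_j} established in Lemma~\ref{lemma.sta}, combined with the observation (already made after the definition of $\norm{H}{\cdot}$ in the mixed-order case, and valid here as well by the same argument) that $\norm{H}{\cdot}$ is a norm on $\UkH[H,0]$ because the boundary condition $\vF\equiv 0$ for $F\in\FHb$ prevents the constant part of $\vT$ from being unconstrained.

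First, I would observe that the first identity in the statement is nothing but the definition of $a_{\eps,H}$ unfolded cell by cell: using that $a_{\eps,H}(\uvH,\uvH)=\sum_{T\in\TH}a_{\eps,T}(\uvT,\uvT)$ and then using the splitting $a_{\eps,T}(\uvT,\uvT)=\norm{L^2(T)^d}{\mathbb{A}_\eps^{\nicefrac12}\grad p_{\eps,T}^{k+1}(\uvT)}^2+j_{\eps,T}(\uvT,\uvT)$, which follows from the definition of $a_{\eps,T}$ and the fact that $j_{\eps,T}$ is symmetric positive semi-definite (so that $j_{\eps,T}(\uvT,\uvT)$ is a genuine square). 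This identity motivates the definition of $\norm{\eps,H}{\uvH}$.

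Next, I would apply the local coercivity bound~\eqref{eq:stab_j} from Lemma~\ref{lemma.sta} cell by cell: for every $T\in\TH$ and every $\uvT\in\UkT$, $a_{\eps,T}(\uvT,\uvT)\geq c\,\alpha\norm{T}{\uvT}^2$ with $c$ independent of $\eps$, $H_T$, $\alpha$ and $\beta$. Summing over $T\in\TH$ and recognizing the right-hand side as $c\,\alpha\norm{H}{\uvH}^2$ by the definition of $\norm{H}{\cdot}$ yields the desired lower bound. Since the constant $c$ in~\eqref{eq:stab_j} does not depend on $H_T$, the global constant obtained after summation depends neither on the individual cell sizes nor on the meshsize $H$.

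Finally, to infer well-posedness of~\eqref{eq:dis.pro.equal}, I would note that $\UkH[H,0]$ is finite-dimensional and that the square linear system associated with the symmetric bilinear form $a_{\eps,H}$ is well-posed if and only if $a_{\eps,H}({\cdot},{\cdot})$ is injective on $\UkH[H,0]$. If $\uvH\in\UkH[H,0]$ satisfies $a_{\eps,H}(\uvH,\uvH)=0$, then the coercivity estimate above forces $\norm{H}{\uvH}=0$, and since $\norm{H}{\cdot}$ is a norm on $\UkH[H,0]$, we conclude $\uvH={\rm 0}$. No obstacle is expected in this proof: the only non-trivial input is Lemma~\ref{lemma.sta}, whose proof reduces the equal-order case to the mixed-order estimate~\eqref{eq:pr.coer.2} plus a control of $(\vT-\vpT)$ on $\partial T$ via the stabilization $j_{\eps,T}$.
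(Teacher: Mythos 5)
Your proposal is correct and follows exactly the paper's route: the paper states that Lemma~\ref{le:well.equal} is readily inferred from the local stability estimate~\eqref{eq:stab_j} of Lemma~\ref{lemma.sta} by summation over the cells, which is precisely your argument. Your additional spelling-out of the finite-dimensional injectivity step and of why $\norm{H}{{\cdot}}$ is a norm on $\UkH[H,0]$ is consistent with what the paper leaves implicit.
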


\begin{remark}[ncFE interpretation] \label{re:nc-equal}
  As in Remark~\ref{re:nc-mixed}, it is possible to give a ncFE interpretation of the scheme~\eqref{eq:dis.pro.equal}. Let
  \begin{equation*}
    \tilde{V}^{k+1}_{\eps,H,0}\define\left\{v_{\eps,H}\in\L\mid v_{\eps,H\mid T}\in \tilde{V}^{k+1}_{\eps,T}\;\forall\,T\in\TH,\,\PikF({\llbracket v_{\eps,H}\rrbracket}_F)=0\;\forall\,F\in\FH\right\},
  \end{equation*}
and consider the following ncFE method: Find $u_{\eps,H}\in\tilde{V}^{k+1}_{\eps,H,0}$ such that
  \begin{equation} \label{eq:NcFE_equal}
    \tilde{a}_{\eps,H}(u_{\eps,H},v_{\eps,H})=\sum_{T\in\TH}\int_T f\,\PikT(v_{\eps,H})\qquad\forall v_{\eps,H}\in\tilde{V}^{k+1}_{\eps,H,0},
  \end{equation}
where $\tilde{a}_{\eps,H}(u_{\eps,H},v_{\eps,H})\define\sum_{T\in\TH}a_{\eps,T}\left(\IkT(u_{\eps,H\mid T}),\IkT(v_{\eps,H\mid T})\right)$.
Then, it can be shown that $\underline{{\rm u}}_{\eps,H}$ solves~\eqref{eq:dis.pro.equal} if and only if $\underline{{\rm u}}_{\eps,T}=\IkT(u_{\eps,H\mid T})$ for all $T\in\TH$ where $u_{\eps,H}$ solves~\eqref{eq:NcFE_equal}.
The main difference with respect to the mixed-order case is that it is no longer
possible to simplify the expression of the bilinear form $\tilde{a}_{\eps,H}$
since the restriction of $p_{\eps,T}^{k+1}\circ \IkT$ to 
$\tilde{V}^{k+1}_{\eps,T}$ is not the identity operator.
As in the monoscale HHO method, the operator $p_{\eps,T}^{k+1}$,
which maps onto the smaller space $\VkpoT$, allows one
to restrict the number of computed basis functions while maintaining
optimal (and here also $\eps$-robust) approximation properties.
The functions (from the discrete space $\tilde{V}_{\eps,T}^{k+1}$) that are eliminated (not computed) are handled by the stabilization term.
\end{remark}

\begin{lemma}[Discrete energy-error estimate] \label{lem:est.equal}
Let the discrete error $\ueH$ be defined by~\eqref{eq:def.err}.
Assume that $u_0\in H^{k+2}(\Omega)$.
Then, the following holds:
\begin{multline} \label{eq:est.equal}
\norm{\eps,H}{\ueH} \leq c\,\rho^{\nicefrac12}
\left(\beta \sum_{T\in\TH}H_T^{2(k+1)}\senorm{H^{k+2}(T)}{u_0}^2 \right.\\+\left. \sum_{T\in\TH}\norm{\L[T]^d}{\mathbb{A}_\eps^{\nicefrac12}\grad\left(u_\eps-\pi_{\eps,T}^{k+1}(u_0)\right)}^2\right)^{\nicefrac12},
\end{multline}
with constant $c$ independent of $\eps$, $H$, $u_0$, $\alpha$ and $\beta$.
\end{lemma}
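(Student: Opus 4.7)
The plan is to adapt the proof of Lemma~\ref{lem:est} (mixed-order case); the only substantive novelty is the contribution of the stabilization $j_{\eps,T}$ to the bilinear form $a_{\eps,H}$, and this is handled cleanly by the consistency bound~\eqref{eq:jeps} established in Lemma~\ref{lemma.sta}.

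First, by Lemma~\ref{le:well.equal} I write $\norm{\eps,H}{\ueH} = \sup_{\uvH\in\UkH[H,0]\setminus\{0\}} a_{\eps,H}(\ueH,\uvH)/\norm{\eps,H}{\uvH}$, and fix such a test function $\uvH$. Integrating by parts $\int_\Omega f\,\vH$ using $-\div(\mathbb{A}_0\grad u_0)=f$, the continuity of $\mathbb{A}_0\grad u_0{\cdot}\vec{n}$ across interfaces (from $u_0\in H^2(\Omega)$), and the boundary condition $\vF\equiv 0$ on $\FHb$, I recover exactly the identity~\eqref{eq:pr.th.2}, which is insensitive to the polynomial order of the cell unknowns.

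Next, I write $a_{\eps,H}(\ueH,\uvH) = a_{\eps,H}(\IkH u_\eps,\uvH) - \int_\Omega f\,\vH$ and unfold the stabilization by noting that $a_{\eps,H}(\IkH u_\eps,\uvH) = \sum_{T\in\TH}\int_T\mathbb{A}_\eps\grad p_{\eps,T}^{k+1}(\IkT u_\eps){\cdot}\grad p_{\eps,T}^{k+1}(\uvT) + \sum_{T\in\TH} j_{\eps,T}(\IkT u_\eps,\uvT)$. Applying~\eqref{eq:rec.op.int} with $w_\eps = p_{\eps,T}^{k+1}(\IkT u_\eps)\in\VkpoT$ to the first sum (valid since $\vT\in H^1(T)$ whether $\vT\in\Pkmd{T}$ or $\Pkd{T}$), and then adding and subtracting $\mathbb{A}_0\grad\Pi^{k+1}_T(u_0)$ cell by cell as in the mixed-order proof, I obtain the decomposition $a_{\eps,H}(\ueH,\uvH) = \term_1 + \term_2 + \term_3$, where $\term_1$ and $\term_2$ are identical to those appearing in the proof of Lemma~\ref{lem:est} and the new contribution is $\term_3 \define \sum_{T\in\TH} j_{\eps,T}(\IkT u_\eps,\uvT)$.

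The bounds on $\term_1$ and $\term_2$ are carried out verbatim as in the mixed-order proof, giving $|\term_1|\leq c\,\beta\bigl(\sum_{T\in\TH}H_T^{2(k+1)}\senorm{H^{k+2}(T)}{u_0}^2\bigr)^{\nicefrac12}\norm{H}{\uvH}$ via the approximation properties~\eqref{eq:app} of $\Pi^{k+1}_T$, and $|\term_2|\leq c\,\beta^{\nicefrac12}\bigl(\sum_{T\in\TH}\norm{\L[T]^d}{\mathbb{A}_\eps^{\nicefrac12}\grad(u_\eps-\pi_{\eps,T}^{k+1}(u_0))}^2\bigr)^{\nicefrac12}\norm{H}{\uvH}$ via the definition~\eqref{eq:proj} of $\pi_{\eps,T}^{k+1}(u_0)$, the inverse inequality of Lemma~\ref{lem:inv.VkpoT}, and the optimality~\eqref{eq:inf}. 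For the new term $\term_3$, Cauchy--Schwarz in the stabilization semi-norm yields $|\term_3|\leq \bigl(\sum_{T\in\TH} j_{\eps,T}(\IkT u_\eps,\IkT u_\eps)\bigr)^{\nicefrac12}\bigl(\sum_{T\in\TH} j_{\eps,T}(\uvT,\uvT)\bigr)^{\nicefrac12}$; the second factor is bounded by $\norm{\eps,H}{\uvH}$ by the very definition of this norm, and the first factor is reduced, via~\eqref{eq:jeps} followed by~\eqref{eq:inf} (since $\pi_{\eps,T}^{k+1}(u_0)\in\VkpoT$), to $c\bigl(\sum_{T\in\TH}\norm{\L[T]^d}{\mathbb{A}_\eps^{\nicefrac12}\grad(u_\eps-\pi_{\eps,T}^{k+1}(u_0))}^2\bigr)^{\nicefrac12}$. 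Assembling the three contributions, dividing by $\norm{\eps,H}{\uvH}$, and using $\norm{H}{\uvH}\leq c\,\alpha^{-\nicefrac12}\norm{\eps,H}{\uvH}$ from Lemma~\ref{le:well.equal} produces~\eqref{eq:est.equal}. The only genuinely new ingredient relative to the mixed-order case is the treatment of $\term_3$; this is resolved at no extra cost by~\eqref{eq:jeps}, so I do not anticipate any real obstacle.
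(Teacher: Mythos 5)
Your proof is correct and follows essentially the same route as the paper: the same decomposition $a_{\eps,H}(\ueH,\uvH)=\term_1+\term_2+\term_3$ with $\term_1,\term_2$ bounded exactly as in the mixed-order case, and $\term_3=\sum_{T\in\TH}j_{\eps,T}(\IkT u_\eps,\uvT)$ controlled by Cauchy--Schwarz for the positive semi-definite form, the consistency bound~\eqref{eq:jeps}, and the optimality property~\eqref{eq:inf}. You merely spell out some steps the paper leaves implicit (e.g., that~\eqref{eq:pr.th.2} and~\eqref{eq:rec.op.int} are insensitive to the cell-unknown degree), which is fine.
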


\begin{proof}
The only difference with the proof of Lemma~\ref{lem:est} is that we now have
$a_{\eps,H}(\ueH,\uvH) = \term_1+\term_2+\term_3$, where $\term_1,\term_2$ are defined and bounded
in that proof and where 
\[
\term_3 \define \sum_{T\in\TH}j_{\eps,T}(\IkT u_\eps,\uvT).
\]
Since $j_{\eps,T}$ is symmetric, positive semi-definite, we infer that
\begin{align*} 
    \left|\term_3\right|&\leq{\left(\sum_{T\in\TH}j_{\eps,T}\left(\IkT u_\eps,\IkT u_\eps\right)\right)}^{\nicefrac12}{\left(\sum_{T\in\TH}j_{\eps,T}(\uvT,\uvT)\right)}^{\nicefrac12}\\
&\leq c{\left(\sum_{T\in\TH}\norm{\L[T]^d}{\mathbb{A}_\eps^{\nicefrac12}\grad\left(u_\eps-p_{\eps,T}^{k+1}(\IkT u_\eps)\right)}^2\right)}^{\nicefrac12}\norm{\eps,H}{\uvH},
\end{align*}
where we have used~\eqref{eq:jeps}. We can now conclude as before.
\end{proof}

\begin{theorem}[Energy-error estimate] \label{th:est.err.equal}
Assume that the correctors $\mu_{l}$ are in $W^{1,\infty}(\R^d)$ for any $1\leq l\leq d$, and that $u_0\in H^{k+2}(\Omega)\cap W^{1,\infty}(\Omega)$.
Then, the following holds:
\begin{multline} \label{eq:est.2.equal}
    \hspace{-0.35cm}{\left(\sum_{T\in\TH}\norm{L^2(T)^d}{\mathbb{A}_\eps^{\nicefrac12}\grad\left(u_\eps-p_{\eps,T}^{k+1}(\uueT)\right)}^2\right)}^{\nicefrac12}\leq c\,\beta^{\nicefrac12}\rho\bigg(
\sum_{T\in\TH}H_T^{2(k+1)}\senorm{H^{k+2}(T)}{u_0}^2 \\
+ \eps|\partial\Omega|\senorm{W^{1,\infty}(\Omega)}{u_0}^2
+ \sum_{T\in\TH} \left[\eps^2\senorm{H^2(T)}{u_0}^2+\,\eps|\partial T|\senorm{W^{1,\infty}(T)}{u_0}^2\right]\bigg)^{\nicefrac12},
\end{multline}
with $c$ independent of $\eps$, $H$, $u_0$, $\alpha$ and $\beta$. In particular, if the mesh $\TH$ is quasi-uniform, and tracking for simplicity only the dependency on $\eps$ and $H$ with $\eps\leq H\leq \ell_\Omega$, we obtain an energy-error upper bound of the form $(\eps^{\nicefrac12} + H^{k+1} + (\eps/H)^{\nicefrac12})$.
\end{theorem}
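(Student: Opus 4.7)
The proof structurally mirrors that of Theorem~\ref{th:est.err}; the only substantive difference lies in invoking the equal-order versions of the supporting lemmas. The plan is to start from the triangle inequality, writing, for each $T\in\TH$,
\[
\norm{L^2(T)^d}{\mathbb{A}_\eps^{\nicefrac12}\grad(u_\eps-p_{\eps,T}^{k+1}(\uueT))}\leq \norm{L^2(T)^d}{\mathbb{A}_\eps^{\nicefrac12}\grad(u_\eps-p_{\eps,T}^{k+1}(\IkT u_\eps))} + \norm{L^2(T)^d}{\mathbb{A}_\eps^{\nicefrac12}\grad p_{\eps,T}^{k+1}(\ueT)},
\]
and summing over $T\in\TH$. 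The second contribution is bounded by $\norm{\eps,H}{\ueH}$ owing to its definition in Lemma~\ref{le:well.equal} (which absorbs the additional stabilization contribution with the correct sign).

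Next, I would treat the first contribution by invoking the best-approximation identity~\eqref{eq:inf}, which remains valid in the equal-order setting since $p_{\eps,T}^{k+1}\circ \IkT$ is still the $\mathbb{A}_\eps$-weighted elliptic projection onto $\VkpoT$ (as explicitly noted in Section~\ref{sec:equal_order}). Taking $\pi_{\eps,T}^{k+1}(u_0)\in\VkpoT$ as the competitor in the infimum yields, cell-wise, the bound by $\norm{L^2(T)^d}{\mathbb{A}_\eps^{\nicefrac12}\grad(u_\eps-\pi_{\eps,T}^{k+1}(u_0))}$. I would then apply Lemma~\ref{lem:est.equal} to control $\norm{\eps,H}{\ueH}$ by exactly the same quantity (up to the polynomial best-approximation term).

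Having reduced the estimate to
\[
\left(\beta \sum_{T\in\TH}H_T^{2(k+1)}\senorm{H^{k+2}(T)}{u_0}^2 + \sum_{T\in\TH}\norm{\L[T]^d}{\mathbb{A}_\eps^{\nicefrac12}\grad(u_\eps-\pi_{\eps,T}^{k+1}(u_0))}^2\right)^{\nicefrac12},
\]
I would add and subtract $\Leps(u_0)$ inside the second sum, and apply the triangle inequality. Lemma~\ref{le:ts} from the Appendix bounds $\norm{L^2(\Omega)^d}{\mathbb{A}_\eps^{\nicefrac12}\grad(u_\eps-\Leps(u_0))}$ globally by $c\,\beta^{\nicefrac12}\rho\,\eps^{\nicefrac12}|\partial\Omega|^{\nicefrac12}\senorm{W^{1,\infty}(\Omega)}{u_0}$ (whence the boundary contribution in~\eqref{eq:est.2.equal}), and Lemma~\ref{lem:app.VkpoT} bounds the local approximation error between $\Leps(u_0)$ and $\pi_{\eps,T}^{k+1}(u_0)$, yielding the cell-wise terms in $H_T^{k+1}|u_0|_{H^{k+2}(T)}$, $\eps |u_0|_{H^2(T)}$, and $\eps^{\nicefrac12}|\partial T|^{\nicefrac12}|u_0|_{W^{1,\infty}(T)}$. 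Rearranging and squaring assembles~\eqref{eq:est.2.equal}. The quasi-uniform simplification is then obtained exactly as in the proof of Theorem~\ref{th:est.err}, estimating $\sum_{T\in\TH}\eps|\partial T|\senorm{W^{1,\infty}(T)}{u_0}^2 \leq c\,\eps H^{-1}|\Omega|\,\senorm{W^{1,\infty}(\Omega)}{u_0}^2$ by mesh regularity.

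Since Lemmas~\ref{lemma.sta},~\ref{le:well.equal}, and~\ref{lem:est.equal} have already absorbed all the technical novelties coming from the presence of the stabilization $j_{\eps,T}$ and from the weakening of the polynomial reproduction properties (we only have $\Pi_{\partial T}^k(p_{\eps,T}^{k+1}(\uvT))=\vpT$, not $\Pi_T^k(p_{\eps,T}^{k+1}(\uvT))=\vT$), there is no real obstacle left at this stage; the proof is essentially a copy of that of Theorem~\ref{th:est.err}. The only point requiring a modicum of care is to make sure that the triangle-inequality decomposition of the total energy error is controlled by $\norm{\eps,H}{\cdot}$ (which is larger than the pure gradient contribution by the nonnegative stabilization term) rather than by the bare $L^2$-norm of $\mathbb{A}_\eps^{\nicefrac12}\grad p_{\eps,T}^{k+1}(\ueT)$, but this goes in the favorable direction.
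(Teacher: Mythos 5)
Your proposal is correct and coincides with the paper's proof, which is literally stated as ``identical to that of Theorem~\ref{th:est.err}'': triangle inequality, the best-approximation property~\eqref{eq:inf} with competitor $\pi_{\eps,T}^{k+1}(u_0)$, Lemma~\ref{lem:est.equal} for the discrete error, and then Lemmas~\ref{le:ts} and~\ref{lem:app.VkpoT} after adding and subtracting $\Leps(u_0)$. The only (inconsequential) slip is in your paraphrase of Lemma~\ref{le:ts}, whose bound also contains the term $\rho^{\nicefrac12}\eps\,\senorm{H^2(\Omega)}{u_0}$ (absorbed by the $\eps^2\senorm{H^2(T)}{u_0}^2$ contributions) and attaches $\rho^{\nicefrac12}$, not $\rho$, to the boundary term.
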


\begin{proof}
Identical to that of Theorem~\ref{th:est.err}.
\end{proof}

\begin{remark}[Dependency on $\rho$]
As in the mixed-order case (cf.~Remark~\ref{rem.rho}), the estimate~\eqref{eq:est.2.equal} has a linear dependency with respect to the (global) heterogeneity/anisotropy ratio $\rho$.
\end{remark}

\begin{remark}[Discretization of the right-hand side] \label{rem:disc_rhs_2}
The same observation as in Remark~\ref{rem:disc_rhs} concerning the discretization of the right-hand side in~\eqref{eq:dis.pro.equal} is still valid for the equal-order case.
\end{remark}

\section{Numerical results} \label{se:num.val}

In this section, we discuss the organization of the computations and we present some numerical results illustrating the above analysis for both the mixed-order and equal-order msHHO methods. Our numerical results have been obtained using the \texttt{Disk++} library, which is available as open-source under MPL license at the address \url{https://github.com/datafl4sh/diskpp}. The numerical core of the library is described in~\cite{CiDPE:17}. For the numerical tests presented below, we have used the direct solver PARDISO of the Intel MKL library. The simulations were run on an Intel i7-3615QM (2.3GHz) with 16Gb of RAM.

\subsection{Offline/online solution strategy} \label{sse:off.on}

Let us consider the equal-order version ($k\geq 0$) of the msHHO method introduced in Section~\ref{sec:equal_order}. Similar considerations carry over to the mixed-order case ($k\geq 1$) of Section~\ref{sec:mixed_order}.
To solve~\eqref{eq:dis.pro.equal}, we adopt an offline/online strategy.
\begin{itemize}
  \item[$\bullet$] In the offline step, all the computations are local, and independent of the right-hand side $f$. We first compute the cell-based and face-based basis functions, i.e., for all $T\in\TH$, we compute the ${\rm N}^{k-1}_d$ functions $\varphi^{k+1,i}_{\eps,T}$ solution to~\eqref{eq:bas.cell.pr}, and the ${\rm card}(\FT)\times{\rm N}^k_{d-1}$ functions $\varphi^{k+1,j}_{\eps,T,F}$ solution to~\eqref{eq:bas.face.pr} (cf.~Remark~\ref{re:prac}). This first substep is fully parallelizable. In a second time, we compute the multiscale reconstruction operators $p_{\eps,T}^{k+1}$, by solving~\eqref{eq:rec.op} for all $T\in\TH$. Each computation requires to invert a symmetric positive-definite matrix of size $\left({\rm N}^{k-1}_d+{\rm card}(\FT)\times{\rm N}^k_{d{-}1}\right)$, which can be performed effectively via Cholesky factorization. This second substep is as well fully parallelizable.
    Finally, we perform static condensation locally in each cell of $\TH$, to eliminate the cell unknowns. Details can be found in~\cite[Section~3.3.1]{DPELe:16}. Basically, in each cell, this substep consists in inverting a symmetric positive-definite matrix of size ${\rm N}^{k}_d$. This last substep is also fully parallelizable.
  \item[$\bullet$] In the online step, we compute the $L^2$-orthogonal projection of the right-hand side $f$ onto $\mathbb{P}^{k}_d(\TH)$, and we then solve a symmetric positive-definite global problem, posed in terms of the face unknowns only. The size of this problem is ${\rm card}(\FHi)\times{\rm N}^k_{d-1}$. If one wants to compute an approximation of the solution to~\eqref{eq:osc} for another $f$ (or for other boundary conditions), only the online step must be rerun.
\end{itemize}

\subsection{Periodic test-case} \label{sse:per}

We consider the periodic test-case studied in~\cite{LBLLo:13} (and also in~\cite{PaVaV:17}).
We let $d=2$, and $\Omega$ be the unit square. We consider Problem~\eqref{eq:osc}, with right-hand side $f(x,y)=\sin(x)\sin(y)$, and oscillatory coefficient
\begin{equation} \label{eq:tc.per}
  \mathbb{A}_\eps(x,y)=a(x/\eps,y/\eps)\mathbb{I}_2,\qquad a(x_1,x_2)=1+100\cos^2(\pi x_1)\sin^2(\pi x_2).
\end{equation}
For the coefficient~\eqref{eq:tc.per}, the homogenized tensor is given by $\mathbb{A}_0\approx 6.72071\,\mathbb{I}_2$.
We fix $\eps=\pi/150\approx 0.021$. 

\begin{figure}[htb]
  \centering
  \ifCMAM
  \epsfig{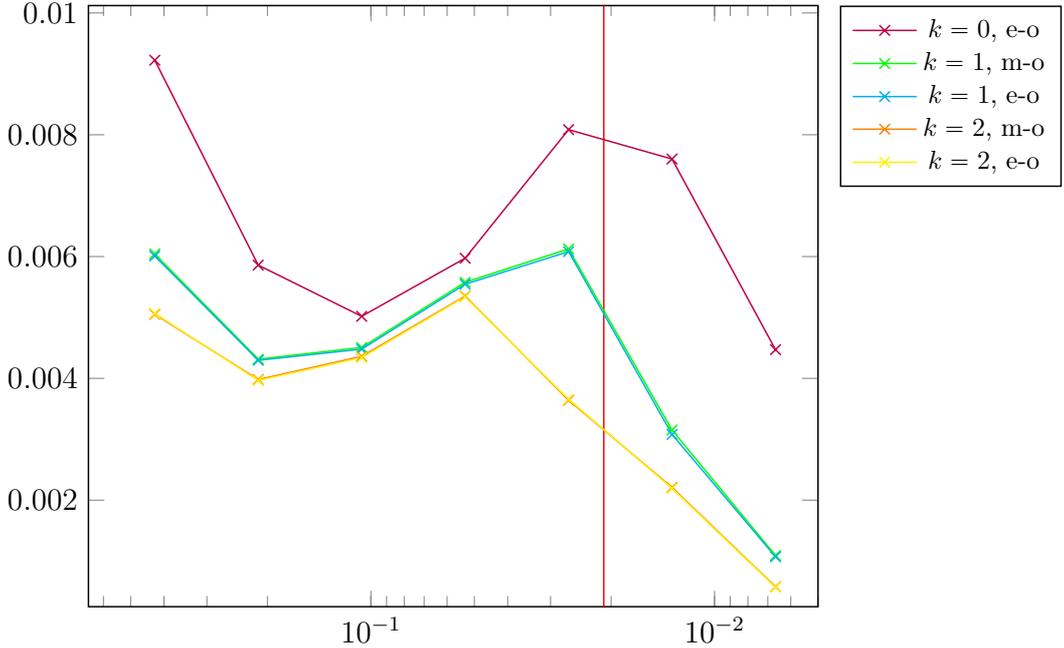}
  \else
  \begin{tikzpicture}[scale=1.4,every node/.style={scale=0.7}]
    \begin{semilogxaxis}[
        legend style = {
          legend pos = outer north east
        },
        yticklabel style={
            /pgf/number format/fixed,
            /pgf/number format/precision=5
        },
        scaled y ticks=false,
        xmin=0.005,
        ymin=0.00025,
        x dir = reverse
      ]
      \draw[color=red] ({axis cs:0.021,0}|-{rel axis cs:0,0}) -- ({axis cs:0.021,0}|-{rel axis cs:0,1});
      \addplot[mark=x,color=purple] table[x=H,y=K0k1eo]{mshhodata_002_ref_L9_k2.dat};
      \addplot[mark=x,color=green] table[x=H,y=K1k1mo]{mshhodata_002_ref_L9_k2.dat};
      \addplot[mark=x,color=cyan] table[x=H,y=K1k1eo]{mshhodata_002_ref_L9_k2.dat};
      \addplot[mark=x,color=orange] table[x=H,y=K2k1mo]{mshhodata_002_ref_L9_k2.dat};
      \addplot[mark=x,color=yellow] table[x=H,y=K2k1eo]{mshhodata_002_ref_L9_k2.dat};
      \legend{{\footnotesize $k=0$, e-o},{\footnotesize $k=1$, m-o},{\footnotesize $k=1$, e-o},{\footnotesize $k=2$, m-o},{\footnotesize $k=2$, e-o}};
    \end{semilogxaxis}
  \end{tikzpicture}
  \fi
  \caption{Periodic test-case: convergence results in energy-norm for mesh levels $l\in\{0,\ldots,6\}$; mixed-order msHHO method with polynomial degrees $k\in\{1,2\}$ and equal-order msHHO method with polynomial degrees $k\in\{0,1,2\}$. The red vertical line indicates the value of $\eps$.}
  \label{fig:conv_1_1_ref9_2}
\end{figure}

We consider a sequence of hierarchical triangular meshes of size $H_l=0.43\times 2^{-l}$ with $l\in\{0,\ldots,9\}$, so that $H_5<\eps<H_4$. A reference solution is computed by solving~\eqref{eq:osc} with the (equal-order) monoscale HHO method on the mesh of level $l_{\textup{ref}}=9$ with polynomial degree $k_{\textup{ref}}=2$. In Figure~\ref{fig:conv_1_1_ref9_2}, we present the (absolute) energy-norm errors obtained with the msHHO method on the meshes ${\cal T}_{H_l}$ with $l\in\{0,\ldots,6\}$. We consider both the mixed-order msHHO method with polynomial degrees $k\in\{1,2\}$ and the equal-order msHHO method with polynomial degrees $k\in\{0,1,2\}$. In all cases, the cell- and face-based oscillatory basis functions are precomputed using the (equal-order) monoscale HHO method on the mesh of level $l_{\textup{osc}}=8$ with polynomial degree $k_{\textup{osc}}=1$. We have verified that the oscillatory basis functions are sufficiently well resolved by comparing our results to those obtained with $k_{\textup{osc}}=2$ and obtaining only very marginal differences. The first observation we draw from Figure~\ref{fig:conv_1_1_ref9_2} is that the mixed-order and equal-order msHHO methods employing the same polynomial degree for the face unknowns deliver very similar results; indeed, the error curves are barely distinguishable both for $k=1$ and $k=2$. Moreover, we can observe all the main features expected from the error analysis: a pre-asymptotic regime where the term $H^{k+1}$ essentially dominates (meshes of levels $l\in\{0,1\}$), the resonance regime (meshes of levels $l\in\{2,3,4\}$ essentially), and the asymptotic regime where the mesh actually resolves the fine scale of the model coefficients (meshes of levels $l\in\{5,6\}$). We can also see the advantages of using a higher polynomial order for the face unknowns: the error is overall smaller, the minimal error in the resonance regime is reached at a larger value of $H$ and takes a smaller value (incidentally, the maximal error in the resonance regime takes a smaller value as well), and the asymptotic regime starts for larger values of $H$.

\subsection{Locally periodic test-case}

\begin{figure}[htb]
  \centering
  \ifCMAM
  \epsfig{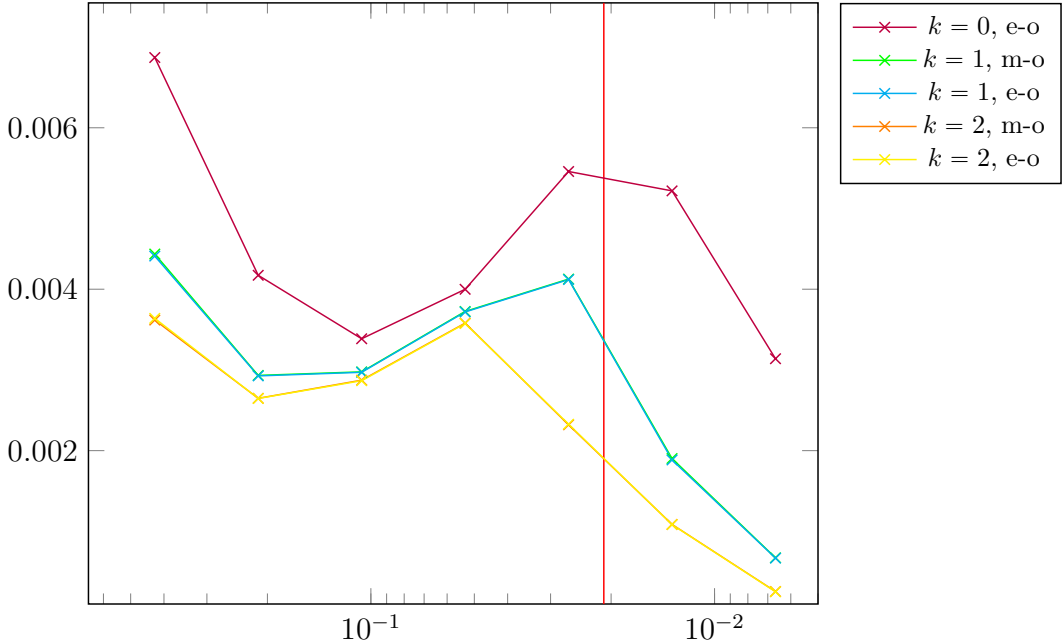}
  \else
  \begin{tikzpicture}[scale=1.4,every node/.style={scale=0.7}]
    \begin{semilogxaxis}[
        legend style = {
          legend pos = outer north east
        },
        yticklabel style={
            /pgf/number format/fixed,
            /pgf/number format/precision=5
        },
        scaled y ticks=false,
        xmin=0.005,
        ymin=0.0001,
        x dir = reverse
      ]
      \draw[color=red] ({axis cs:0.021,0}|-{rel axis cs:0,0}) -- ({axis cs:0.021,0}|-{rel axis cs:0,1});
      \addplot[mark=x,color=purple] table[x=H,y=K0k1eo]{mshhodata_002_ref_L9_k2_lp.dat};
      \addplot[mark=x,color=green] table[x=H,y=K1k1mo]{mshhodata_002_ref_L9_k2_lp.dat};
      \addplot[mark=x,color=cyan] table[x=H,y=K1k1eo]{mshhodata_002_ref_L9_k2_lp.dat};
      \addplot[mark=x,color=orange] table[x=H,y=K2k1mo]{mshhodata_002_ref_L9_k2_lp.dat};
      \addplot[mark=x,color=yellow] table[x=H,y=K2k1eo]{mshhodata_002_ref_L9_k2_lp.dat};
      \legend{{\footnotesize $k=0$, e-o},{\footnotesize $k=1$, m-o},{\footnotesize $k=1$, e-o},{\footnotesize $k=2$, m-o},{\footnotesize $k=2$, e-o}};
    \end{semilogxaxis}
  \end{tikzpicture}
  \fi
  \caption{Locally periodic test-case: convergence results in energy-norm for mesh levels $l\in\{0,\ldots,6\}$; mixed-order msHHO method with polynomial degrees $k\in\{1,2\}$ and equal-order msHHO method with polynomial degrees $k\in\{0,1,2\}$. The red vertical line indicates the value of $\eps$.}
  \label{fig:conv_1_1_ref9_2_lp}
\end{figure}

Keeping the same two-dimensional domain $\Omega$ as in the periodic test-case of Section~\ref{sse:per}, we consider now a locally periodic test-case where
we solve Problem~\eqref{eq:osc} with unchanged right-hand side $f(x,y)=\sin(x)\sin(y)$, but with oscillatory coefficient
\begin{equation} \label{eq:tc.alm.per}
  \mathbb{A}_\eps(x,y)=\left(a(x/\eps,y/\eps)+{\rm e}^{(x^2+y^2)/2}\right)\mathbb{I}_2,\quad\text{with $a$ given in~\eqref{eq:tc.per},}
\end{equation}
and with unchanged value of $\eps$. 
We perform the same numerical experiments as in Section~\ref{sse:per} using the same mesh level and polynomial order parameters for computing the reference solution and the oscillatory basis functions (we verified similarly the adequate resolution of the oscillatory basis functions). Results are reported in Figure~\ref{fig:conv_1_1_ref9_2_lp}. We can draw the same conclusions as in the periodic test-case: similarity of the results delivered by the mixed-order and the equal-order msHHO methods for both $k=1$ and $k=2$, presence of the pre-asymptotic, resonance, and asymptotic regimes, and advantages of using a higher polynomial order for the face unknowns.

To briefly assess computational costs, we compute, for those mesh levels in the pre-asymptotic or resonance regimes for which the error is minimal, the computational times to perform the offline and online steps. We report the results in Table~\ref{ta:time}. We also report the number of degrees of freedom in the global system solved in the online step. We make the experiment for the equal-order msHHO method of orders $k=0$ and $k=2$, for respective mesh levels $l=2$ and $l=1$. We do not make use of parallelism in our implementation to compute the results. Table~\ref{ta:time} shows the interest of higher-order approximations, since a better accuracy is reached at a smaller online computational cost.

\begin{table}
  \caption{\label{ta:time} Offline and online computational times}
  \centering
  \begin{tabular}{|c|c|c|c|c|}
    \hline
     & Energy-error & Offline time (s) & Online time (s) & \#DoFs \\
    \hline\hline
    $k=0\;(l=2)$ & $0.00338612$ & $254$ & $0.026$ & $408$ \\
    $k=2\;(l=1)$ & $0.00264648$ & $520$ & $0.018$ & $288$ \\
    \hline
  \end{tabular}
\end{table}

\paragraph*{Acknowledgments}

The authors are thankful to Alexei Lozinski (LMB, Universit\'e de Franche-Comt\'e) for fruitful discussions on the topic.

\appendix

\section{Estimates on the first-order two-scale expansion} \label{ap:est}

In this appendix, we derive various useful estimates on the first-order two-scale expansion 
$\Leps(u_0)$ defined by~\eqref{eq:correcteur}. 
Except for Lemma~\ref{lem:dual_Neu2}, these estimates are classical; we provide (short) proofs since we
additionally track the direct dependency of the constants on the parameters $\alpha$ and $\beta$
characterizing the spectrum of $\mathbb{A}$ and on the various length scales 
present in the problem.

\subsection{Dual-norm estimates} \label{ap:dual}

Let $D$ be an open, connected, polytopal subset of $\Omega$; in this work,
we will need the cases where $D=\Omega$ or where $D=T\in \TH$. Let $\ell_D$ be a length scale associated with $D$, e.g., its diameter. 
Our goal is to bound the dual norm of the linear map such that
\begin{equation} \label{eq:def_calF}
w\mapsto \mathcal{F}_\eps(w):=\int_D\left(\mathbb{A}_\eps\grad\Leps(u_0)-\mathbb{A}_0\grad u_0\right){\cdot}\grad w,
\end{equation} 
for all $w\in H^1_0(D)$ (Dirichlet case), or for all $w\in H^1_\star(D)\define \{w\in H^1(D)\mid \int_Dw=0\}$ (Neumann case); note that $\mathcal{F}_\eps(w)$ does not change if the values of $w$ are shifted by a constant.

\begin{lemma}[Dual norm, Dirichlet case] \label{lem:dual_Dir}
Assume that the homogenized solution $u_0$ belongs to $H^2(D)$ and that, for any $1\leq l\leq d$, the corrector $\mu_{l}$ belongs to $W^{1,\infty}(\R^d)$. Then,
\begin{equation} \label{eq:dual_Dir}
\sup_{w\in H^1_0(D)} \frac{\left|\mathcal{F}_\eps(w)\right|}{\|\grad w\|_{L^2(D)^d}} 
\leq c\, \beta \eps \senorm{H^2(D)}{u_0},
\end{equation}
with $c$ independent of $\eps$, $D$, $u_0$, $\alpha$, $\beta$, and possibly depending on 
$d$, and on $\displaystyle\max_{1\leq l\leq d}\norm{W^{1,\infty}(\R^d)}{\mu_l}$. 
\end{lemma}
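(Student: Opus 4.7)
The plan is to exploit the classical two-scale algebraic identity that rewrites the flux discrepancy $\mathbb{A}_\eps\grad\Leps(u_0)-\mathbb{A}_0\grad u_0$ as the sum of a remainder already of size $\eps$ in $L^2$ and a fast-oscillating part whose action on $\grad w$ gains an $\eps$-factor only after integration by parts against a skew-symmetric vector potential. Here the Dirichlet assumption $w\in H^1_0(D)$ will be used precisely to kill a boundary term.

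First, using $\grad\Reps(\chi)=\frac{1}{\eps}\Reps(\grad\chi)$ componentwise, I would decompose
\begin{equation*}
\mathbb{A}_\eps\grad\Leps(u_0)-\mathbb{A}_0\grad u_0=\sum_{l=1}^d\Reps(\vec{p}_l)\,\partial_lu_0+\eps\,\mathbb{A}_\eps\sum_{l=1}^d\Reps(\mu_l)\,\grad\partial_lu_0\enifed\tau_\eps^{(1)}+\tau_\eps^{(2)},
\end{equation*}
with $\vec{p}_l(\vec{y})\define\mathbb{A}(\vec{y})(\vec{e}_l+\grad\mu_l(\vec{y}))-\mathbb{A}_0\vec{e}_l$. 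The corrector problem~\eqref{eq:corr} gives $\div\vec{p}_l=0$ in $\R^d$, and~\eqref{eq:Azer} yields $\int_Q\vec{p}_l=0$; moreover, $\vec{p}_l\in L^\infty_{\rm per}(Q)^d$ with $\norm{L^\infty(Q)^d}{\vec{p}_l}\leq c\,\beta$, where $c$ depends on $\norm{W^{1,\infty}(\R^d)}{\mu_l}$. The term $\tau_\eps^{(2)}$ is bounded pointwise by $c\,\eps\beta\,|\grad^2 u_0|$, so Cauchy--Schwarz readily gives $\bigl|\int_D\tau_\eps^{(2)}\cdot\grad w\bigr|\leq c\,\eps\beta\,\senorm{H^2(D)}{u_0}\,\norm{L^2(D)^d}{\grad w}$.

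For $\tau_\eps^{(1)}$ I would invoke the standard result that a periodic, mean-free, divergence-free field $\vec{p}_l\in L^\infty_{\rm per}(Q)^d$ admits a skew-symmetric potential $B^l\in W^{1,\infty}_{\rm per}(Q;\R^{d\times d})$ such that $(\vec{p}_l)_i=\sum_j\partial_jB^l_{ij}$, with $\norm{L^\infty(Q)}{B^l}\leq c\,\norm{L^\infty(Q)^d}{\vec{p}_l}$. Combined with $\Reps(\partial_jB^l_{ij})=\eps\,\partial_j\Reps(B^l_{ij})$ and one integration by parts in $\partial_j$, this yields
\begin{equation*}
\int_D\Reps(\vec{p}_l)\partial_lu_0\cdot\grad w=-\eps\sum_{i,j}\int_D\Reps(B^l_{ij})\,\partial_j(\partial_lu_0\,\partial_iw)+\eps\sum_{i,j}\int_{\partial D}\Reps(B^l_{ij})\,\partial_lu_0\,\partial_iw\,n_j.
\end{equation*}
Two cancellations now make the $\eps$-factor survive. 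Expanding $\partial_j(\partial_lu_0\,\partial_iw)$ produces the ``bad'' contribution $\eps\sum_{i,j}\int_D\Reps(B^l_{ij})\partial_lu_0\,\partial_i\partial_jw$, which vanishes by skew-symmetry of $B^l_{ij}$ against the symmetry of $\partial_i\partial_jw$; and the boundary term vanishes because $w\in H^1_0(D)$ forces $\grad w=(\grad w\cdot\vec{n})\vec{n}$ on $\partial D$ (in trace sense), so $\partial_iw\,n_j-\partial_jw\,n_i=0$ on $\partial D$, and antisymmetry of $B^l$ gives $\sum_{i,j}B^l_{ij}\partial_iw\,n_j=0$. What remains is $-\eps\sum_{i,j}\int_D\Reps(B^l_{ij})\partial_j\partial_lu_0\,\partial_iw$, bounded by $c\,\eps\beta\,\senorm{H^2(D)}{u_0}\,\norm{L^2(D)^d}{\grad w}$ via Cauchy--Schwarz.

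The main technical obstacle is making the two skew-symmetry cancellations rigorous when $w\in H^1_0(D)$ only: both pass through approximating $w$ in $H^1_0(D)$ by functions in $C^\infty_c(D)$, using the $L^\infty$-control of $B^l$ and $u_0\in H^2(D)$ to pass to the limit. Summing the contributions of $\tau_\eps^{(1)}$ and $\tau_\eps^{(2)}$ and taking the supremum over $w$ then produces~\eqref{eq:dual_Dir}.
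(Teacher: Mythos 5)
Your proposal is correct and follows essentially the same route as the paper: your $\vec{p}_l$ is exactly the paper's divergence-free, mean-free field $\theta^l$, your skew-symmetric potential $B^l$ is the paper's $\mathbb{T}^l$, and the $\eps$-gain via integration by parts with skew-symmetry cancellations is the same mechanism. The only (cosmetic) difference is that you integrate by parts in the index carried by $B^l$ and kill the boundary term through the vanishing tangential gradient (rigorously, by density with $C^\infty_c(D)$), whereas the paper integrates by parts onto $w$ itself so that the boundary term vanishes directly from $w\in H^1_0(D)$.
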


\begin{proof}
For any integer $1\leq i\leq d$, we have
  \begin{align}
      {\left[\mathbb{A}_\eps\grad\Leps(u_0)\right]}_i&=\sum_{j=1}^d{\left[\mathbb{A}_\eps\right]}_{ij}\partial_j\Leps(u_0) \nonumber\\
      &=\sum_{j=1}^d{\left[\mathbb{A}_\eps\right]}_{ij}\left(\partial_j u_0+\eps\sum_{l=1}^d\left(\frac{1}{\eps}\Reps(\partial_j\mu_l)\partial_l u_0+\Reps(\mu_l)\partial^2_{j,l} u_0\right)\right) \nonumber\\
      &={\left[\mathbb{A}_0\grad u_0\right]}_i+\sum_{l=1}^d\Reps(\theta_i^l)\partial_l u_0+\eps\sum_{l,j=1}^d{\left[\mathbb{A}_\eps\right]}_{ij}\Reps(\mu_l)\partial^2_{j,l} u_0, \label{eq:pr.app.2}
  \end{align}
with $\theta_i^l\define\mathbb{A}_{il}+\sum_{j=1}^d\mathbb{A}_{ij}\partial_j\mu_l-{\left[\mathbb{A}_0\right]}_{il}$ satisfying the following properties:
\begin{itemize}
  \item[$\bullet$] $\theta_i^l\in L^{\infty}_{\rm per}(Q)$ by assumption on $\mathbb{A}$ and on the correctors $\mu_{l}$;
  \item[$\bullet$] $\int_Q\theta_i^l=0$ as a consequence of~\eqref{eq:Azer};
  \item[$\bullet$] $\sum_{i=1}^d\partial_i\theta^l_i=0$ in $\R^d$ as a consequence of~\eqref{eq:corr}.
\end{itemize}
Adapting~\cite[Equation~(1.11)]{JiKoO:94} (see also~\cite[Sections I.3.1 and I.3.3]{GiRav:86}), we infer that, for any integer $1\leq l\leq d$, there exists a skew-symmetric matrix $\mathbb{T}^l\in W^{1,\infty}_{\rm per}(Q)^{d\times d}$, satisfying $\int_Q\mathbb{T}^l=\mathbb{0}$ and such that, for any integer $1\leq i\leq d$,
\begin{equation} \label{eq:proof.5}
  \theta^l_i=\sum_{q=1}^d\partial_q\mathbb{T}^l_{qi}.
\end{equation}
Plugging~\eqref{eq:proof.5} into~\eqref{eq:pr.app.2}, we infer that, for any integer $1\leq i\leq d$,
\begin{multline*}
{\left[\mathbb{A}_\eps\grad\Leps(u_0)\right]}_i-{\left[\mathbb{A}_0\grad u_0\right]}_i=\eps\left(\sum_{l,q=1}^d\partial_q(\Reps(\mathbb{T}^l_{qi}))\partial_l u_0\right.\\\left. +\sum_{l,j=1}^d{\left[\mathbb{A}_\eps\right]}_{ij}\Reps(\mu_l)\partial^2_{j,l} u_0\right).
\end{multline*}
Since $\partial_q(\Reps(\mathbb{T}^l_{qi}))\partial_l u_0=\partial_q(\Reps(\mathbb{T}^l_{qi})\partial_l u_0)-\Reps(\mathbb{T}^l_{qi})\partial^2_{q,l} u_0$, and recalling the definition~\eqref{eq:def_calF} of $\mathcal{F}_\eps$, this yields
\begin{align}
\mathcal{F}_\eps(w) = {}&\eps\bigg(\sum_{i,l,j=1}^d\int_D{\left[\mathbb{A}_\eps\right]}_{ij}\Reps(\mu_l)\partial^2_{j,l} u_0\,\partial_iw -\sum_{i,l,q=1}^d\int_D\Reps(\mathbb{T}^l_{qi})\partial^2_{q,l} u_0\,\partial_iw\bigg)\nonumber \\ &+\eps\sum_{i,l,q=1}^d\int_D\partial_q\left(\Reps(\mathbb{T}^l_{qi})\partial_l u_0\right)\partial_iw.\label{eq:proof_calF1}
\end{align}
Since $\mathbb{T}^l_{qi}=-\mathbb{T}^l_{iq}$ for any integers $1\leq i,q\leq d$, we infer by integration by parts of the last term that
\begin{align}
\mathcal{F}_\eps(w) ={}&\eps\bigg(\sum_{i,l,j=1}^d\int_D{\left[\mathbb{A}_\eps\right]}_{ij}\Reps(\mu_l)\partial^2_{j,l} u_0\,\partial_iw-\sum_{i,l,q=1}^d\int_D\Reps(\mathbb{T}^l_{qi})\partial^2_{q,l} u_0\,\partial_iw\bigg) \nonumber \\ &+ \eps\sum_{i,l,q=1}^d\int_{\partial D}\partial_q\left(\Reps(\mathbb{T}^l_{qi})\partial_l u_0\right)n_{\partial D,i}\,w,\label{eq:proof_calF2}
\end{align}
where $\vec{n}_{\partial D}$ is the unit outward normal to $D$.
Since $w\in H^1_0(D)$, we obtain
\begin{equation*}
\mathcal{F}_\eps(w)=\eps\bigg(\sum_{i,l,j=1}^d\int_D{\left[\mathbb{A}_\eps\right]}_{ij}\Reps(\mu_l)\partial^2_{j,l} u_0\,\partial_iw-\sum_{i,l,q=1}^d\int_D\Reps(\mathbb{T}^l_{qi})\partial^2_{q,l} u_0\,\partial_iw\bigg).
\end{equation*}
Using the Cauchy--Schwarz inequality, we finally deduce that
\begin{equation*} 
  \sup_{w\in H^1_0(D)}\frac{\left|\mathcal{F}_\eps(w)\right|}{\norm{L^2(D)^d}{\grad w}}\leq c\,\beta\eps\max_{1\leq l\leq d}\left(\norm{L^{\infty}(\R^d)}{\mu_l},\beta^{-1}\norm{L^{\infty}(\R^d)^{d\times d}}{\mathbb{T}^l}\right)\senorm{H^2(D)}{u_0}.
\end{equation*}
We conclude by observing that $\norm{L^{\infty}(\R^d)^{d\times d}}{\mathbb{T}^l} \leq c\,\max_{1\le i\le d}\norm{L^{\infty}(\R^d)}{\theta_i^l} \leq c\,\beta$.
\end{proof}

\begin{lemma}[Dual norm, Neumann case (i)] \label{lem:dual_Neu1}
Assume that the homogenized solution $u_0$ belongs to $H^2(D)\cap W^{1,\infty}(D)$ and that, for any $1\leq l\leq d$, the corrector $\mu_{l}$ belongs to $W^{1,\infty}(\R^d)$. Then,
\begin{equation}\label{eq:dual_Neu1}
\sup_{\substack{w\in H^1_\star(D)}} \frac{\left|\mathcal{F}_\eps(w)\right|}{\|\grad w\|_{L^2(D)^d}} 
\leq c\, \beta \left(\eps \senorm{H^2(D)}{u_0} + |\partial D|^{\nicefrac{1}{2}}\eps^{\nicefrac12} \senorm{W^{1,\infty}(D)}{u_0}\right),
\end{equation}
with $c$ independent of $\eps$, $D$, $u_0$, $\alpha$, $\beta$, and possibly depending on $d$, and on $\displaystyle\max_{1\leq l\leq d}\norm{W^{1,\infty}(\R^d)}{\mu_l}$.  
\end{lemma}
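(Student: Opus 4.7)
The plan is to adapt the proof of Lemma~\ref{lem:dual_Dir}, starting from the pre-IBP identity~\eqref{eq:proof_calF1}, which is valid for any $w\in H^1(D)$ (not only $w\in H^1_0(D)$). The first two volume terms in~\eqref{eq:proof_calF1} are bounded exactly as in the Dirichlet case by $c\,\beta\eps|u_0|_{H^2(D)}\|\grad w\|_{L^2(D)^d}$ via Cauchy--Schwarz and the $L^\infty$ bounds on $\mathbb{A}$, $\mu_l$, and $\mathbb{T}^l$; this already delivers the $\eps|u_0|_{H^2(D)}$ contribution to~\eqref{eq:dual_Neu1}. The obstruction is the remaining term
\[
\widetilde{\mathcal{T}}(w):=\eps\sum_{i,l,q=1}^d\int_D\partial_q\bigl(\Reps(\mathbb{T}^l_{qi})\partial_l u_0\bigr)\partial_iw,
\]
which in the Dirichlet case was absorbed by one integration by parts (the skew-symmetry $\mathbb{T}^l_{qi}=-\mathbb{T}^l_{iq}$ killing the volume remainder, and the resulting boundary term vanishing with $w$). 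In the Neumann case the boundary term is nonzero, and a direct bound on it blows up like $O(1)$ in $\eps$ because $\sum_q\partial_q\mathbb{T}^l_{qi}=\theta_i^l$ has only $L^\infty$ scale.

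The key new ingredient is a boundary-layer cut-off $\eta_\eps\in C^\infty(\overline{D})$ with $\eta_\eps\equiv 1$ in an $\eps$-neighborhood of $\partial D$, $\eta_\eps\equiv 0$ outside a $2\eps$-neighborhood, $0\le\eta_\eps\le 1$, and $\|\grad\eta_\eps\|_{L^\infty(D)}\le c/\eps$; such $\eta_\eps$ exists because $\partial D$ is Lipschitz. Splitting $\Reps(\mathbb{T}^l_{qi})=\Reps(\mathbb{T}^l_{qi})(1-\eta_\eps)+\Reps(\mathbb{T}^l_{qi})\eta_\eps$ induces a decomposition $\widetilde{\mathcal{T}}(w)=\widetilde{\mathcal{T}}_1(w)+\widetilde{\mathcal{T}}_2(w)$. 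For $\widetilde{\mathcal{T}}_1(w)$, the factor $(1-\eta_\eps)$ vanishes on $\partial D$, so the same Leibniz-plus-skew-symmetry manipulation as in the Dirichlet case (the $\partial^2_{q,i}w$ volume remainder cancels by skew-symmetry, and the residual reduces to a surface integral via the divergence theorem) yields $\widetilde{\mathcal{T}}_1(w)=0$ because the surface integrand carries the vanishing factor $(1-\eta_\eps)_{|\partial D}$.

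It remains to estimate $\widetilde{\mathcal{T}}_2(w)$, whose integrand is supported in the boundary strip $D_{2\eps}:=\{x\in D : {\rm dist}(x,\partial D)<2\eps\}$, of volume $|D_{2\eps}|\le c|\partial D|\eps$. Expanding $\partial_q[\Reps(\mathbb{T}^l_{qi})\eta_\eps\partial_l u_0]$ via Leibniz yields three pieces: (i) derivatives on $\Reps(\mathbb{T}^l_{qi})$ (which after summation over $q$ combine with the prefactor $\eps$ into $\sum_{i,l}\int_D\Reps(\theta_i^l)\eta_\eps\partial_l u_0\,\partial_iw$); (ii) derivatives on $\eta_\eps$ (whose $1/\eps$ scaling is neutralized by the prefactor $\eps$); and (iii) derivatives on $\partial_l u_0$, contributing $\eps\int_D\Reps(\mathbb{T}^l_{qi})\eta_\eps\partial^2_{q,l}u_0\,\partial_iw$. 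Cauchy--Schwarz, the $L^\infty$ bounds on $\theta$, $\mathbb{T}^l$, and $\grad u_0$, and the strip estimate $\|\varphi\|_{L^1(D_{2\eps})}\le|D_{2\eps}|^{1/2}\|\varphi\|_{L^2(D)}\le c(|\partial D|\eps)^{1/2}\|\varphi\|_{L^2(D)}$ bound (i) and (ii) by $c\beta|\partial D|^{1/2}\eps^{1/2}|u_0|_{W^{1,\infty}(D)}\|\grad w\|_{L^2(D)^d}$ (precisely the second term in~\eqref{eq:dual_Neu1}), and (iii) by $c\beta\eps|u_0|_{H^2(D)}\|\grad w\|_{L^2(D)^d}$ (absorbed in the first term). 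The main technical obstacle is precisely this localization: the post-IBP boundary integral contains an $O(1)$ contribution from the zero-mean oscillatory coefficient $\theta$ (cf.~\eqref{eq:pr.app.2}) that cannot be bounded directly in terms of $\|\grad w\|_{L^2(D)^d}$; the cut-off confines it to the thin strip, and the volume factor $|D_{2\eps}|^{1/2}\sim(|\partial D|\eps)^{1/2}$ supplies precisely the $\eps^{1/2}|\partial D|^{1/2}$ scaling appearing in~\eqref{eq:dual_Neu1}.
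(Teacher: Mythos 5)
Your argument is correct and essentially coincides with the paper's proof: both start from the pre-integration-by-parts identity \eqref{eq:proof_calF1}, bound the two volume terms by $c\,\beta\eps\senorm{H^2(D)}{u_0}$, localize the remaining term to an $O(\eps)$-wide boundary strip via a cut-off (the paper's $\zeta_\eta$ plays the role of your $1-\eta_\eps$, with the strip width $\eta$ optimized to $\eps$ a posteriori rather than fixed upfront), annihilate the interior contribution using the skew-symmetry of $\mathbb{T}^l$ together with the vanishing of the cut-off at $\partial D$, and control the strip contribution through the same Leibniz expansion and the volume estimate $|D_\eta|\leq\eta|\partial D|$. The only cosmetic difference is that your cut-off is identically one near $\partial D$ whereas the paper's decays linearly over the strip, which changes nothing in the resulting bounds.
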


\begin{proof}
Our starting point is~\eqref{eq:proof_calF1}.
The first two terms in the right-hand side are responsible for a contribution of order $\beta\eps\senorm{H^2(D)}{u_0}$, and it only remains to bound the last term.
Following the ideas of~\cite[p.~29]{JiKoO:94}, we define, for $\eta>0$, the domain $D_\eta\define\left\{\vec{x}\in D\mid{\rm dist}(\vec{x},\partial D)<\eta\right\}$. If $\eta$ is above a critical value (which scales as $\ell_D$), $D_\eta=D$, otherwise $D_\eta\subsetneq D$. We introduce the cut-off function $\zeta_\eta\in C^0(\overline{D})$ such that $\zeta_\eta\equiv 0$ on $\partial D$, defined by $\zeta_\eta(\vec{x})={\rm dist}(\vec{x},\partial D)/\eta$ if $\vec{x}\in D_\eta$, and $\zeta_\eta(\vec{x})=1$ if $\vec{x}\in D\setminus D_\eta$. We have $0\leq \zeta_\eta\leq 1$ and $\max_{1\leq q\leq d}\norm{L^\infty(D)}{\partial_q\zeta_\eta}\leq\eta^{-1}$. We first infer that
\begin{equation*} 
    \eps\sum_{i,l,q=1}^d\int_D\partial_q\left(\Reps(\mathbb{T}^l_{qi})\partial_l u_0\right)\partial_iw=\eps\sum_{i,l,q=1}^d\int_{D_\eta}\partial_q\left((1-\zeta_\eta)\Reps(\mathbb{T}^l_{qi})\partial_l u_0\right)\partial_iw,
\end{equation*}
since $(1-\zeta_\eta)$ vanishes identically on $D\setminus D_\eta$ and since
$$\sum_{i,l,q=1}^d\int_{D}\partial_q\left(\zeta_\eta\Reps(\mathbb{T}^l_{qi})\partial_l u_0\right)\partial_iw=0$$
as can be seen by integration by parts, using the fact that $\mathbb{T}^l_{qi}=-\mathbb{T}^l_{iq}$ for any integers $1\leq i,q\leq d$, and the fact that $\zeta_\eta$ vanishes identically on $\partial D$. Then, accounting for the fact that
\begin{multline*}
\eps\,\partial_q\left((1-\zeta_\eta)\Reps(\mathbb{T}^l_{qi})\partial_l u_0\right)=-\eps\,\partial_q\zeta_\eta\,\Reps(\mathbb{T}^l_{qi})\partial_l u_0\\+(1-\zeta_\eta)\Reps\left(\partial_q\mathbb{T}^l_{qi}\right)\partial_lu_0+\eps(1-\zeta_\eta)\Reps(\mathbb{T}^l_{qi})\partial^2_{q,l}u_0,
\end{multline*}
we infer that
\begin{multline*}
    \hspace{-0.25cm}\left|\eps\!\!\!\sum_{i,l,q=1}^d\int_D\partial_q\left(\Reps(\mathbb{T}^l_{qi})\partial_l u_0\right)\partial_iw\right| \leq c\bigg[|D_\eta|^{\nicefrac12}\left(\frac{\eps}{\eta}+1\right)\left(\max_{1\leq l\leq d}\norm{W^{1,\infty}(\R^d)^{d\times d}}{\mathbb{T}^l}\right)\\\senorm{W^{1,\infty}(D)}{u_0}+\eps\left(\max_{1\leq l\leq d}\norm{L^{\infty}(\R^d)^{d\times d}}{\mathbb{T}^l}\right)\senorm{H^2(D)}{u_0}\bigg]\norm{\L[D]^d}{\grad w}.
  \end{multline*}
Using the estimate $|D_\eta|\le \eta |\partial D|$, the fact that $\max_{1\leq l\leq d}\norm{W^{1,\infty}(\R^d)^{d\times d}}{\mathbb{T}^l}\leq c\,\beta$, and since the function $\eta\mapsto\frac{\eps}{\sqrt{\eta}}+\sqrt{\eta}$ is minimal for $\eta=\eps$, we finally infer the bound~\eqref{eq:dual_Neu1}. 
\end{proof}

\begin{remark}[Weaker regularity assumption] \label{re:mhm}
Without the regularity assumption $u_0\in W^{1,\infty}(D)$, one can still invoke a Sobolev embedding since $u_0\in H^2(D)$. The second term between the parentheses in the right-hand side of~\eqref{eq:dual_Neu1} becomes
$$c(p)\big(|\partial D|\eps \ell_{D}^{-d}\big)^{\nicefrac{1}{2}-\nicefrac{1}{p}}(\senorm{H^1(D)}{u_0}+\ell_D\senorm{H^2(D)}{u_0}),
$$
where $p=6$ for $d=3$ and $p$ can be taken as large as wanted for $d=2$ (note that $c(p)\to+\infty$ when $p\to+\infty$ in that case). The derivation of estimates in this setting is considered in~\cite{PaVaV:17}. Therein, the authors claim that their method is able to get rid of the resonance error (without oversampling). We believe there is an issue with the bound~\cite[eq.~(27)]{PaVaV:17}, which should exhibit the resonant contribution ${\left(\nicefrac{\eps}{\ell_{D}}\right)}^{\nicefrac{1}{2}-\nicefrac{1}{p}}\senorm{H^1(D)}{u_0}$.
\end{remark}

\begin{lemma}[Dual norm, Neumann case (ii)] \label{lem:dual_Neu2}
Assume that $D=T\in \TH$ where $\TH$ is a member of an admissible mesh sequence in the sense of Definition~\ref{def:adm}; set $\ell_D=H_T$. 
Assume that the homogenized solution $u_0$ belongs to $H^3(D)$ and that there is $\kappa>0$ so that $\mathbb{A}\in C^{0,\kappa}(\R^d;\R^{d\times d})$. Then,
\begin{multline}\label{eq:dual_Neu2}
\sup_{w\in H^1_\star(D)} \frac{\left|\mathcal{F}_\eps(w)\right|}{\|\grad w\|_{L^2(D)^d}} 
\leq c\, \beta \left(\left(\eps+(\eps\ell_D)^{\nicefrac12}\right) \senorm{H^2(D)}{u_0} + \eps\ell_D\senorm{H^3(D)}{u_0} \right.\\\left. + \eps^{\nicefrac12}\ell_D^{-\nicefrac12}\senorm{H^1(D)}{u_0}\right),
\end{multline}
with $c$ independent of $\eps$, $D$, $u_0$, $\alpha$, $\beta$, and possibly depending on $d$, $\gamma$, and $\norm{C^{0,\kappa}(\R^d;\R^{d\times d})}{\mathbb{A}/\beta}$.
\end{lemma}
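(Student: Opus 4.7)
The plan is to mirror the structure of the proof of Lemma~\ref{lem:dual_Neu1}, starting again from the identity~\eqref{eq:proof_calF2}. The two volume integrals appearing there are controlled, exactly as in the proof of Lemma~\ref{lem:dual_Dir}, by $c\beta\eps\senorm{H^2(D)}{u_0}\norm{L^2(D)^d}{\grad w}$. All the work is therefore in the boundary term $B_\eps(w)\define\eps\sum_{i,l,q}\int_{\partial D}\partial_q(\Reps(\mathbb{T}^l_{qi})\partial_lu_0)n_{\partial D,i}w$. Applying the product rule to $\partial_q$ and using $\sum_q\partial_q\mathbb{T}^l_{qi}=\theta_i^l$, I would split $B_\eps=B_\eps^{(1)}+B_\eps^{(2)}$, with $B_\eps^{(1)}(w)\define\int_{\partial D}\sum_{i,l}\Reps(\theta_i^l)\partial_lu_0\,n_{\partial D,i}w$ and $B_\eps^{(2)}(w)\define\eps\int_{\partial D}\sum_{i,l,q}\Reps(\mathbb{T}^l_{qi})\partial^2_{q,l}u_0\,n_{\partial D,i}w$. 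The piece $B_\eps^{(2)}$ is handled directly by Cauchy--Schwarz combined with the standard trace inequality applied to $\grad^2u_0\in H^1(D)$ (which is the only place where $u_0\in H^3(D)$ is used) and the trace--Poincar\'e bound $\norm{L^2(\partial D)}{w}\le c\ell_D^{\nicefrac12}\norm{L^2(D)^d}{\grad w}$ (valid since $\int_Dw=0$), producing precisely the $\eps\senorm{H^2(D)}{u_0}$ and $\eps\ell_D\senorm{H^3(D)}{u_0}$ contributions in~\eqref{eq:dual_Neu2}.

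For $B_\eps^{(1)}$, since $u_0\in W^{1,\infty}(D)$ is not assumed, the $\eps^{\nicefrac12}$ gain must be extracted from $L^2$ boundary-layer estimates rather than from the $W^{1,\infty}(D)$-regularity of $u_0$ used in Lemma~\ref{lem:dual_Neu1}. I would introduce a smooth cut-off $\chi_\eta$ of the form $\rho(\mathrm{dist}(\cdot,\partial D)/\eta)$, with $\rho\in C^\infty([0,\infty))$ satisfying $\rho(0)=1$ and $\rho\equiv 0$ on $[1,\infty)$, so that $\chi_\eta\equiv 1$ on $\partial D$, $\chi_\eta\equiv 0$ outside $D_\eta\define\{x\in D\mid\mathrm{dist}(x,\partial D)<\eta\}$, and, crucially, $\grad\chi_\eta$ is parallel to $\vec{n}_{\partial D}$ almost everywhere on $\partial D$. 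Stokes' theorem and the divergence-free identity $\sum_i\partial_i\Reps(\theta_i^l)=0$ (a consequence of~\eqref{eq:corr}) would then rewrite
\[
B_\eps^{(1)}(w)=\int_{D_\eta}\sum_{i,l}\Reps(\theta_i^l)\partial_i\!\left(\partial_lu_0\,w\,\chi_\eta\right),
\]
which one splits as $J_1+J_2+J_3$ by distributing $\partial_i$ on each of the three factors.

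The pieces $J_1$ and $J_2$, which do not involve $\grad\chi_\eta$, are controlled by Cauchy--Schwarz together with the boundary-layer $L^2$ estimates $\norm{L^2(D_\eta)}{w}\le c(\eta\ell_D)^{\nicefrac12}\norm{L^2(D)^d}{\grad w}$ and $\norm{L^2(D_\eta)^d}{\grad u_0}\le c\big((\eta\ell_D^{-1})^{\nicefrac12}\senorm{H^1(D)}{u_0}+(\eta\ell_D)^{\nicefrac12}\senorm{H^2(D)}{u_0}\big)$, standard consequences of the trace inequality, Poincar\'e, and the fundamental theorem of calculus in the normal direction.

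The main obstacle is the control of $J_3=\int_{D_\eta}\sum_{i,l}\Reps(\theta_i^l)\partial_lu_0\,w\,\partial_i\chi_\eta$: the factor $\norm{L^\infty}{\grad\chi_\eta}\le c/\eta$ would, if treated naively, prevent the $\eps^{\nicefrac12}$-gain. The trick is to re-expand $\Reps(\theta_i^l)=\eps\sum_q\partial_q\Reps(\mathbb{T}^l_{qi})$ and to exploit the antisymmetry $\mathbb{T}^l_{qi}=-\mathbb{T}^l_{iq}$ in order to identify (pointwise a.e., for smooth $\chi_\eta$)
\[
\sum_{i,q}\partial_q\Reps(\mathbb{T}^l_{qi})\,\partial_i\chi_\eta=\div\!\left(\Reps(\mathbb{T}^l)\grad\chi_\eta\right),
\]
the offending combination $\sum_{i,q}\Reps(\mathbb{T}^l_{qi})\partial^2_{q,i}\chi_\eta$ cancelling by antisymmetry. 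An integration by parts then gives
\[
J_3=-\eps\sum_l\int_{D_\eta}\Reps(\mathbb{T}^l)\grad\chi_\eta\cdot\grad(\partial_lu_0\,w),
\]
the boundary contribution on $\partial D$ vanishing because $\grad\chi_\eta$ is parallel to $\vec{n}_{\partial D}$ there, while $\vec{n}_{\partial D}^\top\Reps(\mathbb{T}^l)\vec{n}_{\partial D}=0$ by antisymmetry of $\mathbb{T}^l$. Bounding this last form of $J_3$ by Cauchy--Schwarz and the above boundary-layer $L^2$ estimates, and finally choosing $\eta=\eps$ in $J_1,J_2,J_3$ (the estimate being trivial when $\eps>\ell_D$), delivers exactly the contributions $c\beta(\eps\ell_D)^{\nicefrac12}\senorm{H^2(D)}{u_0}$ and $c\beta\eps^{\nicefrac12}\ell_D^{-\nicefrac12}\senorm{H^1(D)}{u_0}$, which, together with the bounds on the volume terms and on $B_\eps^{(2)}$, yield~\eqref{eq:dual_Neu2}.
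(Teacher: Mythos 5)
Your proposal is correct in substance and arrives at exactly the bound \eqref{eq:dual_Neu2}, but it handles the critical boundary contribution by a genuinely different mechanism than the paper. Both proofs start from \eqref{eq:proof_calF2}, treat the two volume terms identically, and bound your $B^{(2)}_\eps$ (the paper's $\term_1$) via the continuous trace inequality \eqref{eq:tr.c} plus the Poincar\'e inequality, which is indeed the only place where $u_0\in H^3(D)$ is used. The divergence occurs on your $B^{(1)}_\eps$ (the paper's $\term_2$): the paper uses the antisymmetry of $\mathbb{T}^l$ to rewrite the normal-flux combination purely in terms of tangential derivatives $\Reps(\grad\mathbb{T}^l_{qi}){\cdot}\vec{\tau}_\sigma^{qi}$ on each sub-face $\sigma$ of the matching simplicial sub-mesh, and then invokes the face-wise oscillatory-integral estimate of~\cite[Lemma~4.6]{LBLLo:13} to gain the factor $\eps^{\nicefrac12}H_S^{-\nicefrac32}$. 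You instead pull the boundary integral back into a volumetric boundary layer $D_\eta$ via the divergence theorem and the identity $\sum_i\partial_i\Reps(\theta^l_i)=0$, and you neutralize the factor $\eta^{-1}$ carried by $\grad\chi_\eta$ through the same antisymmetry, deployed now as the cancellation $\sum_{i,q}\Reps(\mathbb{T}^l_{qi})\partial^2_{q,i}\chi_\eta=0$ followed by one further integration by parts. Your route is more self-contained (no appeal to an external lemma, no sub-face decomposition) and makes the provenance of each term in \eqref{eq:dual_Neu2} transparent; the paper's route keeps the delicate manipulations on $(d{-}1)$-dimensional sets and uses the admissibility of the mesh only through the sub-mesh. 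I checked that your boundary-layer estimates on $\|w\|_{L^2(D_\eta)}$ and $\|\grad u_0\|_{L^2(D_\eta)}$, combined with the choice $\eta=\eps$, reproduce exactly the terms $(\eps\ell_D)^{\nicefrac12}\senorm{H^2(D)}{u_0}$ and $\eps^{\nicefrac12}\ell_D^{-\nicefrac12}\senorm{H^1(D)}{u_0}$.

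Two points deserve more care in a final write-up. First, you never invoke the hypothesis $\mathbb{A}\in C^{0,\kappa}(\R^d;\R^{d\times d})$, yet your argument needs it exactly as the paper's does: it is what guarantees (via Schauder theory for the correctors) that $\theta^l_i\in C^{0,\iota}(\R^d)$ and $\mathbb{T}^l\in C^1(\R^d)^{d\times d}$, so that the traces of $\Reps(\theta^l_i)$ and $\Reps(\mathbb{T}^l)$ on $\partial D$ and the identity $\theta^l_i=\sum_q\partial_q\mathbb{T}^l_{qi}$ make pointwise sense. Second, since $D$ is a polytope, ${\rm dist}({\cdot},\partial D)$ is only Lipschitz; $\chi_\eta=\rho({\rm dist}/\eta)$ is therefore not $C^2$, its Hessian is merely a symmetric matrix-valued measure, and the claim that $\grad\chi_\eta$ is parallel to $\vec{n}_{\partial D}$ holds only outside the $(d{-}2)$-dimensional skeleton of $\partial D$. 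The antisymmetric contraction still vanishes when paired against the continuous field $\Reps(\mathbb{T}^l)$, and the exceptional set on $\partial D$ is negligible, so the argument survives, but this should be stated (or circumvented by mollifying $\chi_\eta$, or by working prism-by-prism over the facets of $D$).
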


\begin{proof}
We proceed as in the proof of Lemma~\ref{lem:dual_Dir}. Concerning the regularity of
$\theta_i^l$, we now have $\theta_i^l\in C^{0,\iota}(\R^d)$ for some $\iota>0$ since the H\"older continuity of $\mathbb{A}$ on $\R^d$ implies the H\"older continuity of $\mu_{l}$ and $\grad\mu_{l}$ on $\R^d$ for any $1\leq l\leq d$; cf., e.g.,~\cite[Theorem 8.22 and Corollary 8.36]{GiTru:01}. Following \cite[p.~6-7]{JiKoO:94} and \cite[p.~131-132]{LBLLo:13}, we infer that the skew-symmetric matrix 
$\mathbb{T}^l$ is such that $\mathbb{T}^l\in C^1(\R^d)^{d\times d}$. 
Our starting point is~\eqref{eq:proof_calF2}.
The first two terms in the right-hand side are responsible for a contribution of order $\beta\eps\senorm{H^2(D)}{u_0}$, and it only remains to bound the last term. We have
\begin{multline*} 
\eps\sum_{i,l,q=1}^d\int_{\partial D}\partial_q\left(\Reps(\mathbb{T}^l_{qi})\partial_l u_0\right)n_{\partial D,i}\,w=\eps\sum_{i,l,q=1}^d\int_{\partial D}\Reps(\mathbb{T}^l_{qi})\partial^2_{q,l} u_0\,n_{\partial D,i}\,w \\
+\sum_{i,l,q=1}^d\int_{\partial D}\Reps\left(\partial_q\mathbb{T}^l_{qi}\right)\partial_l u_0\,n_{\partial D,i}\,w\enifed \term_1+\term_2.
\end{multline*}
Using the Cauchy--Schwarz inequality and the trace inequality~\eqref{eq:tr.c}, the first term in the right-hand side can be estimated as
$$\left|\term_1\right|\leq c\, \beta\eps\ell_D^{-1}\left(\senorm{H^2(D)}{u_0}+\ell_D\senorm{H^3(D)}{u_0}\right)\left(\norm{L^2(D)}{w}+\ell_D\norm{L^2(D)^d}{\grad w}\right),
$$
since $\max_{1\leq l\leq d}\norm{C^0(\R^d)^{d\times d}}{\mathbb{T}^l}\leq c\,\beta$. Observing that $\int_Dw=0$, we can use the Poincar\'e inequality~\eqref{eq:poin} to infer that
$$
\left|\term_1\right|\leq c\, \beta\eps\left(\senorm{H^2(D)}{u_0}+\ell_D\senorm{H^3(D)}{u_0}\right)\norm{L^2(D)^d}{\grad w}.
$$
To estimate the second term in the right-hand side, we adapt the ideas from~\cite[Lemma 4.6]{LBLLo:13}. Considering the matching simplicial sub-mesh of $D$, let us collect in the set $\mathfrak{F}_D$ all the sub-faces composing the boundary of $D$. 
Then, we can write
$$
\term_2=\sum_{\sigma\in\mathfrak{F}_D}\sum_{l=1}^d\sum_{q=1}^d\sum_{q<i\leq d}\int_\sigma\Reps\left(\grad\mathbb{T}^l_{qi}\right){\cdot}\vec{\tau}_\sigma^{qi}\,\partial_lu_0\,w,
$$
where the vectors $\vec{\tau}_\sigma^{qi}$ are such that $\|\vec{\tau}_\sigma^{qi}\|_{\ell^2}\leq 1$ and $\vec{\tau}_\sigma^{qi}{\cdot}\vec{n}_{\partial D\mid\sigma}=0$. Then, using a straightforward adaptation of the result in~\cite[Lemma 4.6]{LBLLo:13}, and since $\max_{1\leq l\leq d}\norm{C^1(\R^d)^{d\times d}}{\mathbb{T}^l}\leq c\,\beta$, we infer that
\begin{multline*}
\left| \int_\sigma\Reps\left(\grad\mathbb{T}^l_{qi}\right){\cdot}\vec{\tau}_\sigma^{qi}\,\partial_lu_0\,w \right| \leq c\, \beta\eps^{\nicefrac12} H_S^{-\nicefrac32}\left(\senorm{H^1(S)}{u_0}+H_S\senorm{H^2(S)}{u_0}\right)\\\left(\norm{L^2(S)}{w}+H_S\norm{L^2(S)^d}{\grad w}\right),
\end{multline*}
where $S$ is the simplicial sub-cell of $D$ having $\sigma$ as face. Collecting the contributions of all the sub-faces $\sigma \in \mathfrak{F}_D$ and using the mesh regularity assumptions on $D$, we infer that
\[
\left|\term_2\right| \leq c\, \beta\eps^{\nicefrac12} \ell_D^{-\nicefrac32}\left(\senorm{H^1(D)}{u_0}+\ell_D\senorm{H^2(D)}{u_0}\right)\left(\norm{L^2(D)}{w}+\ell_D\norm{L^2(D)^d}{\grad w}\right).
\]
Finally, invoking the Poincar\'e inequality~\eqref{eq:poin} since $w$ has zero mean-value in $D$ yields 
\[
\left|\term_2\right| \leq c\, \beta\eps^{\nicefrac12} \ell_D^{-\nicefrac12}\left(\senorm{H^1(D)}{u_0}+\ell_D\senorm{H^2(D)}{u_0}\right)\norm{L^2(D)^d}{\grad w}.
\]
Collecting the above bounds on $\term_1$ and $\term_2$ concludes the proof. 
\end{proof}

\subsection{Global energy-norm estimate} \label{ap:ts}

\begin{lemma}[Energy-norm estimate] \label{le:ts}
  Assume that the homogenized solution $u_0$ belongs to $H^2(\Omega)\cap W^{1,\infty}(\Omega)$, and that, for any $1\leq l\leq d$, the corrector $\mu_{l}$ belongs to $W^{1,\infty}(\R^d)$. Then,
\begin{equation} \label{eq:ts_global}
\norm{\L^d}{\mathbb{A}_\eps^{\nicefrac12}\grad (u_\eps-\Leps(u_0))}\leq c\,\beta^{\nicefrac12}\left(\rho^{\nicefrac12}\eps\,\senorm{H^2(\Omega)}{u_0}+{|\partial\Omega|}^{\nicefrac12}\eps^{\nicefrac12}\,\senorm{W^{1,\infty}(\Omega)}{u_0}\right),
\end{equation}
with $c$ independent of $\eps$, $\Omega$, $u_0$, $\alpha$, $\beta$, and possibly depending on $d$, and on $\displaystyle\max_{1\leq l\leq d}\norm{W^{1,\infty}(\R^d)}{\mu_l}$. 
\end{lemma}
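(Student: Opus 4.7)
The plan is to test the equation satisfied by $r_\eps \define u_\eps - \Leps(u_0)$ and to read off the claim from the Dirichlet dual bound of Lemma~\ref{lem:dual_Dir}, after compensating the $O(\eps)$ boundary defect by a boundary-layer lifting. Subtracting the weak form of~\eqref{eq:hom} from that of~\eqref{eq:osc} and using $\mathcal{F}_\eps$ as in~\eqref{eq:def_calF} with $D=\Omega$ yields, for every $w \in \Hoz$,
\begin{equation*}
  \int_\Omega \mathbb{A}_\eps \grad r_\eps \cdot \grad w = -\mathcal{F}_\eps(w).
\end{equation*}
The only obstacle to plugging $w = r_\eps$ directly is that $r_\eps \not\in \Hoz$: since $u_\eps = u_0 = 0$ on $\partial\Omega$, its trace equals $-\eps \sum_{l=1}^d \Reps(\mu_l)\partial_l u_0|_{\partial\Omega}$, which is $O(\eps)$ but nonzero.

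I compensate for this by the standard boundary-layer construction already used in the proof of Lemma~\ref{lem:dual_Neu1}. For $\eta > 0$, introduce the strip $\Omega_\eta \define \{x \in \Omega \mid {\rm dist}(x,\partial\Omega) < \eta\}$ and a cut-off $\chi_\eta \in W^{1,\infty}(\Omega)$ equal to $1$ on $\partial\Omega$, supported in $\overline{\Omega_\eta}$, and satisfying $\|\grad \chi_\eta\|_{L^\infty(\Omega)^d} \le c\,\eta^{-1}$. Setting
\begin{equation*}
  \phi_\eps \define \chi_\eta\,\eps\sum_{l=1}^d \Reps(\mu_l)\,\partial_l u_0,
\end{equation*}
the corrected quantity $\tilde r_\eps \define r_\eps + \phi_\eps$ lies in $\Hoz$. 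Choosing $w = \tilde r_\eps$ in the above equation, rearranging, and combining coercivity of $\mathbb{A}_\eps$, Cauchy--Schwarz, and Lemma~\ref{lem:dual_Dir} yields
\begin{equation*}
  \norm{\L^d}{\mathbb{A}_\eps^{\nicefrac12}\grad \tilde r_\eps} \le \norm{\L^d}{\mathbb{A}_\eps^{\nicefrac12}\grad \phi_\eps} + c\,\alpha^{-\nicefrac12}\beta\,\eps\,\senorm{H^2(\Omega)}{u_0}.
\end{equation*}
A triangle inequality then gives $\norm{\L^d}{\mathbb{A}_\eps^{\nicefrac12}\grad r_\eps} \le \norm{\L^d}{\mathbb{A}_\eps^{\nicefrac12}\grad \tilde r_\eps} + \norm{\L^d}{\mathbb{A}_\eps^{\nicefrac12}\grad \phi_\eps}$, so the whole problem reduces to estimating the energy of the lifting $\phi_\eps$, which is the main technical point.

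Expanding $\grad \phi_\eps$ produces three pieces: a term from $\grad \chi_\eta$ scaling as $c(\eps/\eta)\,\senorm{W^{1,\infty}(\Omega)}{u_0}\,|\Omega_\eta|^{\nicefrac12}$; a term from $\grad(\Reps(\mu_l)) = \eps^{-1}\Reps(\grad \mu_l)$ which, thanks to $\mu_l \in W^{1,\infty}(\R^d)$, contributes $c\,\senorm{W^{1,\infty}(\Omega)}{u_0}\,|\Omega_\eta|^{\nicefrac12}$; and a term from second derivatives of $u_0$ bounded globally by $c\,\eps\,\senorm{H^2(\Omega)}{u_0}$. Using $|\Omega_\eta| \le c\,\eta\,|\partial\Omega|$ and $\mathbb{A}_\eps \le \beta\,\mathbb{I}_d$ leads to
\begin{equation*}
  \norm{\L^d}{\mathbb{A}_\eps^{\nicefrac12}\grad\phi_\eps} \le c\,\beta^{\nicefrac12}\Big(\senorm{W^{1,\infty}(\Omega)}{u_0}\,|\partial\Omega|^{\nicefrac12}\bigl(\eta^{\nicefrac12} + \eps\,\eta^{-\nicefrac12}\bigr) + \eps\,\senorm{H^2(\Omega)}{u_0}\Big).
\end{equation*}
Minimizing in $\eta$ (the minimum is attained at $\eta = \eps$) collapses the bracket to $2\eps^{\nicefrac12}$, and using $\alpha^{-\nicefrac12}\beta = \beta^{\nicefrac12}\rho^{\nicefrac12}$ gives~\eqref{eq:ts_global}. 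The main obstacle is choosing the right lifting and tracking $\|\mathbb{A}_\eps^{\nicefrac12}\grad \phi_\eps\|$ sharply; once that is in place, the argument is a calibrated use of Cauchy--Schwarz together with the same optimization step as in Lemma~\ref{lem:dual_Neu1}.
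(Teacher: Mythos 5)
Your proposal is correct and follows essentially the same route as the paper's proof: your lifting $\phi_\eps=\chi_\eta\,\eps\sum_l\Reps(\mu_l)\partial_lu_0$ with $\chi_\eta=1-\zeta_\eta$ is exactly the difference $\Leps(u_0)-{\cal L}^{1,0}_\eps(u_0)$ between the two-scale expansion and the paper's boundary-corrected expansion, and the subsequent steps (testing with the corrected error in $\Hoz$, invoking Lemma~\ref{lem:dual_Dir}, the three-piece estimate of $\grad\phi_\eps$, the bound $|\Omega_\eta|\le|\partial\Omega|\eta$, and the optimization $\eta=\eps$) coincide with the paper's argument.
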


\begin{proof}
The regularity assumptions on $u_0$ and the correctors imply
$(u_\eps-\Leps(u_0))\in H^1(\Omega)$;
however, we do not have $(u_\eps-\Leps(u_0))\in H^1_0(\Omega)$.
Following the ideas in~\cite[p.~28]{JiKoO:94}, 
we define, for $\eta>0$, the domain $\Omega_\eta\define\left\{\vec{x}\in\Omega\mid{\rm dist}(\vec{x},\partial\Omega)<\eta\right\}$.
If $\eta$ is above a critical value, $\Omega_\eta=\Omega$, otherwise $\Omega_\eta\subsetneq\Omega$.
We introduce the cut-off function $\zeta_\eta\in C^0(\overline{\Omega})$ such that $\zeta_\eta\equiv 0$ on $\partial\Omega$, defined by $\zeta_\eta(\vec{x})={\rm dist}(\vec{x},\partial\Omega)/\eta$ if $\vec{x}\in\Omega_\eta$, and $\zeta_\eta(\vec{x})=1$ if $\vec{x}\in\Omega\setminus\Omega_\eta$. We have
$0\leq \zeta_\eta\leq 1$ and 
$\displaystyle\max_{1\leq i\leq d}\norm{L^\infty(\Omega)}{\partial_i\zeta_\eta}\leq \eta^{-1}$.
The function $\zeta_\eta$ allows us to define a corrected first-order two-scale expansion ${\cal L}^{1,0}_\eps(u_0)\define u_0+\eps\zeta_\eta\sum_{l=1}^d\Reps(\mu_l)\partial_lu_0$ such that $(u_\eps-{\cal L}^{1,0}_\eps(u_0))\in\Hoz$.
We start with the triangle inequality:
\begin{align} 
  \norm{\L^d}{\mathbb{A}_\eps^{\nicefrac12}\grad(u_\eps-\Leps(u_0))}\leq{}&\norm{\L^d}{\mathbb{A}_\eps^{\nicefrac12}\grad(u_\eps-{\cal L}^{1,0}_\eps(u_0))}\nonumber \\&+\norm{\L^d}{\mathbb{A}_\eps^{\nicefrac12}\grad(\Leps(u_0)-{\cal L}^{1,0}_\eps(u_0))}.\label{eq:proof.1}
\end{align}
Let us focus on the first term in the right-hand side of~\eqref{eq:proof.1}. We have
\begin{multline*}
\norm{\L^d}{\mathbb{A}_\eps^{\nicefrac12}\grad(u_\eps-{\cal L}^{1,0}_\eps(u_0))}^2 = \int_\Omega\mathbb{A}_\eps\grad\left(u_\eps-\Leps(u_0)\right){\cdot}\grad\left(u_\eps-{\cal L}^{1,0}_\eps(u_0)\right)\\+\int_\Omega\mathbb{A}_\eps\grad\left(\Leps(u_0)-{\cal L}^{1,0}_\eps(u_0)\right){\cdot}\grad\left(u_\eps-{\cal L}^{1,0}_\eps(u_0)\right).
\end{multline*}
Since $(u_\eps-{\cal L}^{1,0}_\eps(u_0))\in\Hoz$, we infer that
\begin{align} 
  \norm{\L^d}{\mathbb{A}_\eps^{\nicefrac12}\grad(u_\eps-{\cal L}^{1,0}_\eps(u_0))}\leq{}&\alpha^{-\nicefrac12}\sup_{w\in\Hoz}\frac{\left|\int_\Omega\mathbb{A}_\eps\grad\left(u_\eps-\Leps(u_0)\right){\cdot}\grad w\right|}{\norm{\L^d}{\grad w}} \nonumber \\&+\norm{\L^d}{\mathbb{A}_\eps^{\nicefrac12}\grad(\Leps(u_0)-{\cal L}^{1,0}_\eps(u_0))}.\label{eq:proof.2}
\end{align}
Since $\int_\Omega\mathbb{A}_\eps\grad u_\eps{\cdot}\grad w=\int_\Omega\mathbb{A}_0\grad u_0{\cdot}\grad w$ for any $w\in\Hoz$ in view of~\eqref{eq:osc} and~\eqref{eq:hom}, the estimates~\eqref{eq:proof.1} and~\eqref{eq:proof.2} lead to
\begin{align} 
  \norm{\L^d}{\mathbb{A}_\eps^{\nicefrac12}\grad(u_\eps-\Leps(u_0))}\leq{}&\alpha^{-\nicefrac12}\sup_{w\in\Hoz}\frac{\left|\mathcal{F}_\eps(w)\right|}{\norm{\L^d}{\grad w}} \nonumber\\&+2\beta^{\nicefrac12}\norm{\L^d}{\grad(\Leps(u_0)-{\cal L}^{1,0}_\eps(u_0))},\label{eq:proof.3}
\end{align}
recalling that $\mathcal{F}_\eps(w)=\int_\Omega\left(\mathbb{A}_\eps\grad\Leps(u_0)-\mathbb{A}_0\grad u_0\right){\cdot}\grad w$. Since we can bound the first term in the right-hand side of~\eqref{eq:proof.3} using Lemma~\ref{lem:dual_Dir} (with $D=\Omega$), it remains to estimate the second term. Owing to the definition of $\zeta_\eta$, we infer that
\begin{equation} \label{eq:proof.7}
  \norm{\L^d}{\grad(\Leps(u_0)-{\cal L}^{1,0}_\eps(u_0))}=\eps\norm{L^2(\Omega_\eta)^d}{\grad\left((1-\zeta_\eta)\sum_{l=1}^d\Reps(\mu_l)\partial_lu_0\right)}.
\end{equation}
For any integer $1\leq i\leq d$, we have
\begin{multline*} 
  \partial_i\left((1-\zeta_\eta)\sum_{l=1}^d\Reps(\mu_l)\partial_lu_0\right)=-\partial_i\zeta_\eta\sum_{l=1}^d\Reps(\mu_l)\partial_lu_0\\+\frac{(1-\zeta_\eta)}{\eps}\sum_{l=1}^d\Reps(\partial_i\mu_l)\partial_lu_0+(1-\zeta_\eta)\sum_{l=1}^d\Reps(\mu_l)\partial^2_{i,l}u_0,
\end{multline*}
and using the properties of the cut-off function $\zeta_\eta$, we infer that
\begin{multline*} 
  \eps\norm{L^2(\Omega_\eta)^d}{\grad\left((1-\zeta_\eta)\sum_{l=1}^d\Reps(\mu_l)\partial_lu_0\right)}\leq c \bigg({|\Omega_\eta|}^{\nicefrac12}\left(\frac{\eps}{\eta}+1\right)\senorm{W^{1,\infty}(\Omega)}{u_0}\\+\eps\senorm{H^2(\Omega)}{u_0}\bigg).
\end{multline*}
Since $|\Omega_\eta|\leq |\partial\Omega|\eta$, 
and choosing $\eta=\eps$ to minimize the function $\eta\mapsto\frac{\eps}{\sqrt{\eta}}+\sqrt{\eta}$, we can conclude the proof (note that $\rho\geq1$ by definition).
\end{proof}


\footnotesize
\bibliographystyle{plain}
\bibliography{mshho}

\end{document}